      \OR\ifentrytype{incollection}\OR\ifentrytype{inproceedings}%
      \OR\ifentrytype{inreference}} {\printtext[title]{%
\definecolor{darkblue}{rgb}{0.13,0.13,0.39}
\newtheorem{thm}{Theorem}[section] 
\newtheorem{lem}[thm]{Lemma}
\newtheorem{prop}[thm]{Proposition} \newtheorem{cor}[thm]{Corollary}
\theoremstyle{definition}  \newtheorem*{rem*}{Remark}
 \newcounter{assum}
\newcommand{\I}{{\rm i}} \newcommand{\pp}{\mathbb{P}} 
  \newcommand{\rr}{\mathbb{R}}
\newcommand{\nn}{\mathbb{N}} \newcommand{\zz}{\mathbb{Z}} \newcommand{\aip}{\mathcal{A}_2}
  \newcommand{\ct}{\mathcal{T}}
\newcommand{\cm}{\mathcal{M}}
\newcommand{\p}{\partial}
\newcommand{\uno}[1]{\mathbf{1}_{#1}}
\newcommand{\ep}{\varepsilon}
\newcommand{\vs}{\vspace{6pt}}
\newcommand{\wt}{\widetilde}
\newcommand{\K}{{\sf K}_{\Ai}}  
\newcommand{\qand}{\quad\text{and}\quad}
\newcommand{\qqand}{\qquad\text{and}\qquad}
\newcommand{\ts}{\hspace{0.1em}}
\newcommand{\tts}{\hspace{0.05em}}
\newcommand{\tsm}{\hspace{-0.1em}}
\newcommand{\ttsm}{\hspace{-0.05em}}
\newcommand{\sI}{{\sf I}}
\DeclareMathOperator{\Ai}{Ai}
\DeclareMathOperator{\tr}{tr}
\DeclareMathOperator{\sech}{sech}
\newcommand{\KGN}{{\sf K}_{N}^{\rm bb}}
\newcommand{\wtKGN}{{\wt {\sf K}}_{N}^{\rm bb}}
\newcommand{\sKGN}{{\sf K}_{N}^{\star}}
\newcommand{\oKGN}{{\sf K}^{\rm be}_{N}}
\newcommand{\eKGN}{{\sf K}^{\rm rbb}_{N}}
\newcommand{\oeKGN}{{\sf K}^{{\rm be}/{\rm rbb}}_{N}}
\DeclareMathOperator*{\argmax}{argmax}
\newcommand{\subalign}[1]{%
  \vcenter{%
    \Let@ \restore@math@cr \default@tag
    \baselineskip\fontdimen10 \scriptfont\tw@
    \advance\baselineskip\fontdimen12 \scriptfont\tw@
    \lineskip\thr@@\fontdimen8 \scriptfont\thr@@
    \lineskiplimit\lineskip
    \ialign{\hfil$\m@th\scriptstyle##$&$\m@th\scriptstyle{}##$\crcr
      #1\crcr
    }%
  }
}
\def\dash---{\kern.16667em---\penalty\exhyphenpenalty\hskip.16667em\relax}
\numberwithin{equation}{section}
\let\oldmarginpar\marginpar
\renewcommand\marginpar[1]{\-\oldmarginpar[\raggedleft\footnotesize #1]%
  {\raggedright{\small\textsf{#1}}}}
\begin{document}

\title{Extreme statistics of non-intersecting Brownian paths}

\author[G.~B.~Nguyen]{Gia Bao Nguyen} 
\address{G.~B.~Nguyen\\
  Centro de Modelamiento Matem\'atico\\
  Universidad de Chile\\
  Av. Beauchef 851, Torre Norte\\
  Santiago\\
  Chile} \email{bnguyen@dim.uchile.cl}

\author[D.~Remenik]{Daniel Remenik}
\address{D.~Remenik\\
  Departamento de Ingenier\'ia Matem\'atica and Centro de Modelamiento Matem\'atico\\
  Universidad de Chile\\
  Av. Beauchef 851, Torre Norte\\
  Santiago\\
  Chile} \email{dremenik@dim.uchile.cl}

\begin{abstract}

We consider finite collections of $N$ non-intersecting Brownian paths on the line and on the half-line with both absorbing and reflecting boundary conditions (corresponding to Brownian excursions and reflected Brownian motions) and compute in each case the joint distribution of the maximal height of the top path and the location at which this maximum is attained.
The resulting formulas are analogous to the ones obtained in \cite{mqr} for the joint distribution of $\cm=\max_{x\in\rr}\!\big\{\aip(x)-x^2\}$ and $\ct=\argmax_{x\in\rr}\!\big\{\aip(x)-x^2\}$, where $\aip$ is the Airy$_2$ process, and we use them to show that in the three cases the joint distribution converges, as $N\to\infty$, to the joint distribution of $\cm$ and $\ct$.
In the case of non-intersecting Brownian bridges on the line, we also establish small deviation inequalities for the argmax which match the tail behavior of $\ct$.
Our proofs are based on the method introduced in \cite{cqr,bcr} for obtaining formulas for the probability that the top line of these line ensembles stays below a given curve, which are given in terms
of the Fredholm determinant of certain ``path-integral'' kernels.
\end{abstract}

\maketitle

\section{Introduction and main results}\label{sec:intro}

Consider a collection of $N$ Brownian bridges $(B_1(t),\dotsc,B_N(t))_{t\in[0,1]}$, starting and ending at the origin, and condition them (in the sense of Doob) to not intersect in the region $t\in(0,1)$.
We will refer to this model as \emph{non-intersecting Brownian bridges}, and we will always assume that the paths are ordered so that $B_1(t)<\dotsm<B_N(t)$ for $t\in(0,1)$.
This model together with its many variants have been studied intensively in the last decade or so, both in the probability and statistical physics literatures (see for instance \cite{tracyWidomDysonBM,adlerVanMoerbeke-PDEs,tracyWidom-Pearcey,kobIzumKat,SMCR,delvauxKuijlaarsZhang,ferrariVeto-Tacnode,forrester,liechty-nibmdope,schehr,johansson-BMtacnode,ferrariVetoHardEdge,liechtyWang} among many others).
Most of the recent interest in the study of systems of non-intersecting paths stems from their relation with random matrix theory (RMT) and the Kardar-Parisi-Zhang (KPZ) universality class.
For an overview of this relation in the context of this paper we refer the reader to the introduction of \cite{nibm-loe}; for a more general overview of the other aspects of the KPZ universality class which are relevant to our discussion we mention \cite{quastelRem-review,borodinPetrovReview,quastelSpohn}.

This paper is a continuation of \cite{nibm-loe}, where we studied the distribution of the random variable
\begin{equation}\label{eq:cmN}
\cm_N^{\rm bb}=\max_{t\in[0,1]}B_N(t),
\end{equation}
the maximal height attained by the top path in our collection of non-intersecting Brownian bridges.
The main result of \cite{nibm-loe} is that, for fixed $N$, $(\cm_N^{\rm bb})^2$ is distributed as the largest eigenvalue of a certain random matrix model, known as the Laguerre Orthogonal Ensemble.
Our goal now is twofold: first, to study the location at which the maximum in \eqref{eq:cmN} is attained, and, second, to extend our results to the case of non-intersecting Brownian motions on the half-line.
Before stating our results we will briefly explain the motivation behind the result obtained in \cite{nibm-loe} and discuss the context in which the study of the location of the maximum is natural.

\subsection{Last passage percolation and the Airy process}\label{sec:LPP}

In \emph{(geometric) last passage percolation (LPP)} one considers a family $\big\{w(i,j)\}_{i,j\in\zz^+}$ of independent geometric random variables with parameter
$q$ (i.e. $\pp(w(i,j)=k)=q(1-q)^{k}$ for $k\geq0$) and lets $\Pi_N$ be the collection of up-right paths of length $N$, that is, paths $\pi=(\pi_0,\dotsc,\pi_n)$ such that $\pi_i-\pi_{i-1}\in\{(1,0),(0,1)\}$.
The \emph{point-to-point last passage time} is defined, for $M,N\in\zz^+$, by 
\[L^{\rm point}(M,N)=\max_{\pi\in\Pi_{N+M}:\ts(0,0)\to(M,N)}\sum_{i=0}^{M+N}w(\pi_i),\]
where the maximum is taken over all up-right paths connecting the origin to $(M,N)$.
\citet{johanssonShape} proved that there are explicit constants $c_1$ and $c_2$, depending only on $q$, such that 
\[\pp\big(L^{\rm point}(N,N)\leq c_1N+c_2N^{1/3}r\big)\longrightarrow F_{\rm GUE}(r)\] as $N\to\infty$, with $F_{\rm GUE}$ the Tracy-Widom GUE distribution from random matrix theory, that is, the distribution of the asymptotic fluctuations of the largest eigenvalue of a random matrix drawn from the \emph{Gaussian Unitary Ensemble} \cite{tracyWidom} (an Hermitian matrix with complex Gaussian entries).
The above convergence still holds if one considers $L^{\rm point}(N+k,N-k)$ for any fixed $k$ instead of $L^{\rm point}(N,N)$.
\citet{prahoferSpohn} turned next to the study of the asymptotic fluctuations of the process $k\mapsto L^{\rm point}(N+k,N-k)$.
Consider the process $t\mapsto H_N(t)$ defined by linearly interpolating the values given by scaling $L^{\rm point}(N,M)$ through the relation
\begin{equation}\label{eq:lpp-pt-scaling}
L^{\rm point}(N+k,N-k)=c_1N+c_2N^{1/3}H_N(c_3N^{-2/3}k),
\end{equation}
where $c_3$ is another explicit constant which depends only on $q$.
Then
\begin{equation}\label{eq:johAiry2}
  H_N(t) \longrightarrow \aip(t)-t^2
\end{equation}
in distribution, in the topology of uniform convergence on compact sets, where $\aip$ is the \emph{Airy$_2$ process} \cite{prahoferSpohn,johansson}.
The Airy$_2$ process is a stationary, non-Markovian process, with marginals given by the Tracy-Widom GUE distribution and with finite-dimensional distributions given by an explicit Fredholm determinant formula, and is believed to describe the asymptotic spatial fluctuations for all models in the KPZ universality class with curved initial data.
On the other hand one can define the \emph{point-to-line last passage time} by
\begin{equation}
L^{\rm line}(N)=\max_{k=-N,\dots,N}L^{\rm point}(N+k,N-k).\label{eq:lpplptline}
\end{equation}
From the definition of $H_N$, \cite{johansson} showed based on \eqref{eq:johAiry2} that
\begin{equation}\label{eq:baikrains}
c_2^{-1}N^{-1/3}[L^{\rm line}(N)-c_1N]\longrightarrow\sup_{t\in\rr}\{\aip(t)-t^2\}
\end{equation}
in distribution.
But it was known separately \cite{baikRains} that the distribution of the quantity on the left converges to $F_{\rm GOE}$, the \emph{Tracy-Widom GOE distribution} \cite{tracyWidom2}, which is the analog of $F_{\rm GUE}$ in the case of real symmetric Gaussian random matrices.
From this, \citet{johansson} deduced the remarkable fact that
\begin{equation}\label{eq:johGOE}
  \pp\!\left(\max_{t\in\rr}\,(\aip(t)-t^2)\leq m\right)=F_{\rm GOE}(4^{1/3}m).
\end{equation}

Since it will play an important role in the sequel, let us stop for a moment to define $F_{\rm GOE}$ more precisely.
We say that an $N\times N$ random matrix $A$ is drawn from the \emph{Gaussian Orthogonal Ensemble (GOE)} if $A_{ij}=\mathcal{N}\hspace{-0.1em}(0,1)$ for $i>j$ and $A_{ii}=\mathcal{N}\hspace{-0.1em}(0,2)$, where $\mathcal{N}(a,b)$ denotes a Gaussian random variable with mean $a$ and variance $b$ and all the Gaussian variables are independent (subject to the symmetry condition).
By the Wigner semicircle law \cite{wigner} the largest eigenvalue $\lambda_{\rm GOE}(N)$ of $A$ is expected to lie near $2\sqrt{N}$.
The Tracy-Widom GOE distribution describes the fluctuations of $\lambda_{\rm GOE}(N)$ around $2\sqrt{N}$:
\begin{equation}\label{eq:GOElim}
F_{\rm GOE}(m)=\lim_{N\to\infty}\pp\big(\lambda_{\rm GOE}(N)\leq2\sqrt{N}+N^{-1/6}m\big).
\end{equation}
It is given explicitly by
\begin{equation}\label{eq:GOE}
F_{\rm GOE}(m)=\det({\sf I}-{\sf P}_0{\sf B}_m{\sf P}_0)_{L^2(\rr)},
\end{equation}
where ${\sf P}_m$ denotes the projection onto the interval $(m,\infty)$ (i.e. ${\sf P}_mf(x)=f(x)\uno{x>m}$ for $f\in L^2(\rr)$), ${\sf B}_m$ is the integral operator acting on $L^2(\rr)$ with kernel
\begin{equation}
  {\sf B}_m(x,y)=\Ai(x+y+m),\label{eq:defB0}
\end{equation}
and $\Ai$ denotes the Airy function.
The determinant in \eqref{eq:GOE} means the Fredholm determinant on the Hilbert space $L^2(\rr)$.
For the definition, properties and some background on Fredholm determinants, which can be thought of as the natural generalization of the usual determinant to infinite dimensional Hilbert spaces, we refer the reader to \cite[Section 2]{quastelRem-review}.

A direct proof of \eqref{eq:johGOE} was provided in \cite{cqr}.
The proof is based in first obtaining a Fredholm determinant formula for probabilities of the form $\pp\big(\aip(t)\leq g(t),\,\forall~t\in[\ell,r])$, and then choosing $g(t)=t^2+m$ and computing the limit as $\ell\to-\infty$ and $r\to\infty$.
This method has been applied to obtain several other results about the Airy$_2$ and related processes (see \cite{bcr} and the review \cite{quastelRem-review}), and is the basis of our arguments in \cite{nibm-loe} and in this paper.
Arguably the most important of those applications has been the computation of the distribution of the location at which the maximum in \eqref{eq:johGOE} is obtained.
To understand the interest in this distribution, consider the random variable
\[\ct_N^{\rm lpp}=\argmax_{k=-N,\dotsc,N}L^{\rm point}_N(N+k,N-k)\] 
(the location $k$ which solves the maximization problem need not be unique, so for simplicity we take the argmax to mean the leftmost point at which the maximum is attained).
The random variable $\ct_N^{\rm lpp}$ corresponds to the location of the endpoint of the maximizing path in point-to-line LPP.
\citet{mezardParisi} derived non-rigorously the scaling relation $|\ct_N^{\rm lpp}|\sim N^{2/3}$.
In view of this we define the rescaled endpoint
$\wt\ct_N^{\rm lpp}=c_3N^{-2/3}\ct_N^{\rm lpp}$,
so that
\[\wt\ct_N^{\rm lpp}=\argmax_{|t|\leq c_3^{-1}N^{2/3}}H_N(t).\]
Since $H_N(t)$ converges to $\aip(t)-t^2$, one expects that $\wt\ct_N^{\rm lpp}$ converges in distribution to 
\begin{equation}\label{eq:Tdef}
  \ct:=\argmax_{t\in\rr}\big\{\aip(t)-t^2\},
\end{equation}
provided of course that this last argmax is unique.
Johansson proved in \cite{johansson} that, under the assumption of uniqueness of this argmax, which was proved several years later independently in \cite{corwinHammond} and \cite{mqr} (and slightly later, in much greater generality, in \cite{pimentelStatMax}), one indeed has
\begin{equation}\label{eq:LPPcv}
  \wt{\ct}_N^{\rm lpp}\xrightarrow[N\to\infty]{}\ct
\end{equation}
in distribution.
By KPZ universality, it is expected that $\ct$ should appear through similar considerations for many other models in the KPZ class.
In particular, $\ct$ should describe the asymptotic distribution of the endpoint for a very broad class of point-to-line directed random polymers (of which LPP should be thought of as a zero-temperature limit).
While the computation of the \emph{polymer endpoint distribution} has interested statistical physicists since at least the mid 90's \cite{halpZhang}, its identification with $\ct$ dates back only to \cite{johansson}.
After several (non-rigorous) attempts in the physics literature at computing the distribution of $\ct$ which yielded only partial progress, the answer came in \cite{mqr}, who used the method introduced in \cite{cqr} to derive a formula for the joint density of $\cm$ and $\ct$, with
\begin{equation}\label{eq:Mdef}
  \cm=\max_{t\in\rr}\big\{\aip(t)-t^2\}
\end{equation}
(see \eqref{eq:densairy2} below for the explicit formula for this density).

\subsection{Non-intersecting Brownian bridges and LOE}

As we mentioned above, the Airy$_2$ process is expected to arise in the description of the asymptotic spatial fluctuations of a wide class of models in the KPZ universality class.
While this conjecture, in its full generality, remains one of the central open problems in the field, it is known to hold for a wide class of models, among them non-intersecting Brownian bridges.
More precisely, it holds that the top curve in a system of $N$ non-intersecting Brownian bridges converges to the Airy$_2$ process minus a parabola:
\begin{equation}\label{eq:bbAiry2}
  2N^{1/6}\Big(B_N\big(\tfrac12(1+N^{-1/3}t)\big)-\sqrt{N}\Big)\longrightarrow\aip(t)-t^2
\end{equation}
in the sense of convergence in distribution in the topology of uniform convergence on compact sets.
This result has long been well-known in the sense of convergence of finite-dimensional distributions; the stronger convergence stated here was proved in \cite{corwinHammond}.
In view of \eqref{eq:bbAiry2}, a similar argument as the one leading to \eqref{eq:johGOE} shows that
\begin{equation}\label{eq:bbGOE}
    \lim_{N\to\infty}\pp\Big(2N^{1/6}\big(\cm_N^{\rm bb}-\sqrt{N}\big)\leq m\Big)=F_{\rm GOE}(4^{1/3}m)
\end{equation}
(where, we recall, $\cm_N^{\rm bb}$ was defined in \eqref{eq:cmN}).

The question we wanted to answer in our previous article \cite{nibm-loe} was whether there is a finite $N$ version of this result.
Suprisingly, the answer turned out to be positive, connecting $\cm^{\rm bb}_N$ with another random matrix ensemble.
Let $X$ be an $n\times N$ matrix whose entries are i.i.d. $\mathcal{N}\hspace{-0.1em}(0,1)$, where we assume $n\geq N$.
Then the random $N\times N$ matrix $M=X^{\sf T}\tsm X$ is said to be drawn from the \emph{Laguerre Orthogonal Ensemble} (and is often referred to also as a \emph{Wishart matrix}, as it can be thought of essentially as the sample covariance matrix of $n$ independent samples of an $N$-variate Gaussian population).
By the Mar\v{c}enko-Pastur law \cite{marcenkoPastur} the largest eigenvalue $\lambda_{\rm LOE}(N)$ of $M$ lies at $(4+o(1))N$.
Assuming that $n=N+p$ for some fixed $p$, the fluctuations of $\lambda_{\rm LOE}(N)$ around $4N$ are of order $N^{1/3}$, and the limiting law is again Tracy-Widom GOE:
\begin{equation}\label{eq:LOElim}
  \lim_{N\to\infty}\pp\big(\lambda_{\rm LOE}(N)\leq4N+2^{4/3}N^{1/3}m\big)=F_{\rm GOE}(m).
\end{equation}
In all that follows we will assume that $n=N+1$.
For this choice we let
\begin{equation}\label{eq:FLOE}
F_{{\rm LOE},N}(m)=\pp(\lambda_{\rm LOE}(N)\leq m).
\end{equation}
We introduce also the \emph{Hermite kernel}\footnote{This is just the standard Hermite kernel which appears elsewhere in the literature (and, in particular, in \cite{nibm-loe}); the superscript bb in our notation stands for Brownian bridges, and is included to distinguish the kernel from similar ones which will be introduced below in the case of Brownian bridges on the half-line.}
\begin{equation}\label{eq:defKN}
  \KGN(x,y)=\sum_{n=0}^{N-1}\varphi_n(x)\varphi_n(y),
\end{equation}
where the $\varphi_n$'s are the \emph{harmonic oscillator functions} (which we will refer to as \emph{Hermite functions}), defined as $\varphi_n(x)=e^{-x^2/2}p_n(x)$ with $p_n$ the $n$-th normalized Hermite polynomial.
We introduce also the \emph{reflection operator} $\varrho_m$, given by 
\begin{equation}
\varrho_mf(x)=f(2m-x).\label{eq:refl}
\end{equation}

\begin{thm}[\cite{nibm-loe}]\label{thm:nibm-loe}
For every fixed $N$ we have
\[\pp\!\left(\sqrt{2}\cm_N^{\rm bb}\leq m\right)=\det\!\left(\sI - \KGN\varrho_m\KGN\right)_{L^2(\rr)}=F_{{\rm LOE},N}(2m^2).\]  
In particular $4\cm_N^2$ is distributed as the largest eigenvalue of the LOE matrix $M$ introduced above.
\end{thm}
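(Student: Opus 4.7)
The plan is to follow the method of \cite{cqr,bcr}: first derive a Fredholm determinant formula for the probability that the top bridge stays below an arbitrary continuous barrier $g$, then specialize to $g\equiv m$, and finally identify the resulting determinant with the LOE distribution function.

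As a starting point I would use the fact that the positions of the $N$ non-intersecting Brownian bridges at any fixed time $t\in(0,1)$ form a determinantal point process whose correlation kernel is built from the Hermite functions $\varphi_0,\dots,\varphi_{N-1}$ (after the standard rescaling by $\sqrt{2}$ that aligns the bridge marginal at $t=1/2$ with the Gaussian weight defining the $\varphi_n$'s), and that the full line ensemble is extended-determinantal. The method of \cite{cqr,bcr} then converts the global event $\{B_N(t)\leq g(t)\text{ for all }t\in[0,1]\}$ into a single-time Fredholm determinant on $L^2(\rr)$ whose kernel is obtained by conjugating $\KGN$ by the Brownian-bridge semigroup with absorption along the curve $\{x=g(t)\}$. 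For a \emph{constant} barrier $g\equiv m$, the reflection principle gives the killed transition density $p_t(x,y)-p_t(x,2m-y)=(\sI-\varrho_m)p_t(x,y)$, and the telescoping of the bridge semigroup together with the projection property of $\KGN$ collapses the iterated convolutions to a single reflection factor, producing the clean formula $\det(\sI-\KGN\varrho_m\KGN)_{L^2(\rr)}$.

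For the second identity, I would exploit the classical relation between non-intersecting Brownian bridges at $t=1/2$ (whose joint law is, up to rescaling, that of GUE eigenvalues) and a Wishart ensemble obtained by a squaring operation. Since $\KGN\varrho_m\KGN$ has rank at most $N$ and is built entirely from $\varphi_0,\dots,\varphi_{N-1}$, the Fredholm determinant reduces to an $N\times N$ matrix determinant whose entries are the overlap integrals $\int_\rr\varphi_j(x)\varphi_k(2m-x)\,dx$. A parity decomposition of these integrals together with the quadratic change of variables that relates Hermite polynomials and Laguerre polynomials of parameter $\pm1/2$ converts the expression into the $N\times N$ determinantal representation of $F_{{\rm LOE},N}(2m^2)$, i.e., the CDF of the top eigenvalue of the $(N+1)\times N$ Wishart ensemble fixed by our choice $n=N+1$.

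The hardest step will be this Hermite-to-Laguerre identification, because it requires carefully tracking the parity decomposition, the Jacobian of the quadratic substitution, and the combinatorial normalization that specifically picks out the $n=N+1$ Wishart parameter rather than some other choice; essentially one is encoding at the level of kernels the distributional identity $\lambda_{\rm LOE}(N)\stackrel{d}{=}4(\cm_N^{\rm bb})^2$. Once this algebraic identity is in hand, the first equality is comparatively routine: the semigroup-conjugation formalism of \cite{cqr,bcr} together with the reflection principle reduces the constant-barrier calculation to a direct, and by now standard, manipulation of Hermite-kernel Fredholm determinants.
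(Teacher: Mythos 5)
Your overall strategy for the first equality\dash---continuum statistics formula from \cite{cqr,bcr}, reflection principle, collapse to a single reflection operator\dash---is the route the paper takes (the paper itself only sketches it here, deferring to \cite{nibm-loe} and to Section \ref{sec:bm_halfspace}). But as written your plan has a concrete gap: you propose to conjugate $\KGN$ by ``the Brownian-bridge semigroup with absorption along $\{x=g(t)\}$'' and to apply the reflection principle to the \emph{constant} barrier $g\equiv m$. In the original time parametrization $t\in[0,1]$ the single-time correlation kernel of the bridges is a $t$-dependent rescaling of the Hermite kernel, not the fixed projection $\KGN$, and the relevant evolution is not the heat semigroup. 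The formula $\det(\sI-\KGN\varrho_m\KGN)$ only emerges after the change of variables $t=\tfrac{e^{2s}}{1+e^{2s}}$ (equivalently passing to stationary Dyson Brownian motion via \eqref{eq:dbm-bb}), under which the semigroup becomes $e^{s{\sf D}}$ with ${\sf D}$ the harmonic oscillator and the constant barrier $m$ becomes the hyperbolic-cosine barrier $m\cosh(s)$. The reflection principle is then applied not to the original bridges at level $m$ but to an auxiliary Brownian bridge hitting a \emph{straight line}, obtained by a second change of variables in the boundary-value problem \eqref{eq:bound-pde}; this is what produces the explicit reflection kernel \eqref{eq:reflOperBB}. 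Moreover, the final step is not mere ``telescoping'': the identity $e^{L{\sf D}}\KGN\ts{\sf R}^{(m),{\rm bb}}_{[-L,L]}\ts e^{L{\sf D}}\KGN=\KGN\varrho_m\KGN$ (independent of $L$) is a genuine computation via the contour-integral representation of the Hermite functions and Gaussian integration, together with trace-norm control to discard the boundary projections as $L\to\infty$.

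For the second equality your proposed Hermite-to-Laguerre identification via the parity split and the quadratic substitution is a plausible route, but you correctly flag it as the hardest step and do not carry it out; the present paper does not prove it either, citing an independent argument in \cite{nibm-loe}. So the proposal should be read as a correct high-level plan for the first identity with the essential coordinate change missing, and an unexecuted plan for the second.
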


The proof of the first equality is similar to that of \eqref{eq:johGOE} in \cite{cqr} and will be described in Section \ref{sec:bm_halfspace} in the context of Brownian motions on a half-line.
The second equality was also proved in \cite{nibm-loe} (through an independent argument).
Theorem \ref{thm:nibm-loe} can be recast in terms of the probability that (GUE) Dyson Brownian motion hits an hyperbolic cosine barrier (see \cite[Prop. 1.4]{nibm-loe}), but we will not adopt that perspective in this paper.

\subsection{Location of the maximum}

Our first result provides a formula for the distribution of 
\begin{equation}\label{eq:defTNbb}
  \ct_N^{\rm bb}:=\argmax_{t\in[0,1]}B_N(t),
\end{equation}
the location at which the maximum height of the top line in the system of $N$ non-intersecting Brownian bridges is attained (note that, since the top path is obviously absolutely continuous with respect to a Brownian bridge, the argmax in this case is easily seen to be almost surely unique).
Analogously to the result of \cite{mqr}, we will provide in fact an explicit formula for the joint density of the max and the argmax.

For $m\geq0$ and $t\in(0,1)$ let
\begin{equation}\label{eq:at}
g(t)=\frac{1}{\sqrt{2t(1-t)}}
\end{equation}
and define the function
\begin{equation}\label{eq:psibb}
\psi^{\rm bb}_{m,t}(n)=\sqrt{2}\tts g(t)^{3/2}\,\Bigl(\frac{t}{1-t}\Bigr)^{-\frac{n}{2}}\bigl[\varphi_n'(m\tts g(t))+(2t-1)m\tts g(t)\varphi_n(m\tts  g(t))\bigr]
\end{equation}
and the rank one kernel
\begin{equation}\label{eq:Psibb}
\Psi^{\rm bb}_{N,m,t}(x,y)=\left(\sum_{n=0}^{N-1}\varphi_n(x)\psi^{\rm bb}_{m,t}(n)\right)\!\left(\sum_{n=0}^{N-1}\varphi_n(y)\psi^{\rm bb}_{m,1-t}(n)\right).
\end{equation}
We note also that, by the second equality in Theorem \ref{thm:nibm-loe} and the fact that $F_{\rm LOE}(m)>0$ for all $m>0$, ${\sf I}-\KGN\varrho_m\KGN$ is invertible for all such $m$.

\begin{thm}\label{thm:densBB}
Let $f_N^{\rm bb}(m,t)$ denote the joint density of $\cm_N^{\rm bb}$ and $\ct_N^{\rm bb}$.
Then for all $m>0$ and all $t\in(0,1)$,
\begin{equation}
\begin{split}
\label{eq:densBB}
f_N^{\rm bb}(m,t)&=\tr\!\left[({\sf I}-\KGN\varrho_{\sqrt{2}m}\KGN)^{-1}\Psi^{\rm bb}_{N,m,t}\right]F_{{\rm LOE},N}(4m^2)\\
&=\det\!\left({\sf I}-\KGN \varrho_{\sqrt{2}m} \KGN+\Psi^{\rm bb}_{N,m,t}\right)-F_{{\rm LOE},N}(4m^2).
\end{split}
\end{equation}
\end{thm}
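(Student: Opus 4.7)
The plan is to adapt the approach of \cite{mqr} --- which computed the joint density of $(\cm,\ct)$ for the Airy$_2$ process minus a parabola --- to the finite-$N$, Hermite-kernel setting of Theorem \ref{thm:nibm-loe}. The main tool is the method of \cite{cqr,bcr}, which for a general upper barrier $g$ yields a Fredholm-determinant expression for $\pp(\sqrt{2}\ts B_N(s)\leq g(s),\ s\in[0,1])$ in terms of a path-integral kernel built out of $\KGN$, the reflection operators $\varrho_{g(s)}$, and the harmonic-oscillator semigroup encoding the (time-changed) Brownian-bridge evolution. Specializing to the two-level barrier $g(s) = a\uno{s\leq t} + b\uno{s>t}$ produces a Fredholm-determinant formula for
\[D_N(a,b,t) := \pp\!\left(\sqrt{2}\tts\max_{s\in[0,t]}B_N(s)\leq a,\ \sqrt{2}\tts\max_{s\in[t,1]}B_N(s)\leq b\right),\]
which collapses to $F_{{\rm LOE},N}(2m^2)$ when $a=b=m$, recovering Theorem \ref{thm:nibm-loe}.

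\textbf{From two-interval maxima to the joint density.} Write $M_1 = \max_{s\in[0,t]}B_N(s)$ and $M_2 = \max_{s\in[t,1]}B_N(s)$. Since $\ct_N^{\rm bb}$ is a.s.\ unique one has $\cm_N^{\rm bb} = \max(M_1,M_2)$ and $\{\ct_N^{\rm bb}\leq t\} = \{M_1\geq M_2\}$, so
\[\pp(\cm_N^{\rm bb}\leq m,\,\ct_N^{\rm bb}\leq t) = \pp(M_2\leq M_1\leq m) = \int_0^{\sqrt{2}m} \partial_a D_N(a,b,t)\big|_{b=a}\,da.\]
Differentiating in $m$ and then in $t$ identifies
\[f_N^{\rm bb}(m,t) = \sqrt{2}\,\partial_t\partial_a D_N(a,b,t)\big|_{a=b=\sqrt{2}m},\]
reducing the problem to the explicit computation of two derivatives of a Fredholm determinant.

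\textbf{Executing the derivatives.} Using the standard identity $\partial_\alpha \det(\sI - K_\alpha) = -\det(\sI-K_\alpha)\tr[(\sI-K_\alpha)^{-1}\partial_\alpha K_\alpha]$, together with Theorem \ref{thm:nibm-loe} to identify $\det(\sI-\KGN\varrho_{\sqrt{2}m}\KGN) = F_{{\rm LOE},N}(4m^2)$, one converts the two derivatives of $D_N$ into the trace of $(\sI-\KGN\varrho_{\sqrt{2}m}\KGN)^{-1}$ against a single perturbation operator, weighted by $F_{{\rm LOE},N}(4m^2)$. The $a$-derivative falls on $\varrho_a$ and produces the spatial derivative $\varphi_n'$ evaluated at the barrier; the $t$-derivative acts on the harmonic-oscillator semigroup sandwiched between the two reflections and on the Brownian-bridge scaling $g(t) = 1/\sqrt{2t(1-t)}$, yielding the exponential weight $(t/(1-t))^{-n/2}$ together with the harmonic-potential correction $(2t-1)m\ts g(t)\ts\varphi_n$. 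Because the two derivatives insert operators at the two ``endpoints'' of the two-level path-integral kernel, the net perturbation factorizes as a rank-one operator, which after normalization is exactly $\Psi^{\rm bb}_{N,m,t}$. This yields the first equality in \eqref{eq:densBB}; the second then follows from the rank-one identity $\det(\sI-A+B) - \det(\sI-A) = \det(\sI-A)\tr[(\sI-A)^{-1}B]$, valid for rank-one $B$, applied to $A = \KGN\varrho_{\sqrt{2}m}\KGN$ and $B = \Psi^{\rm bb}_{N,m,t}$.

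\textbf{Main obstacle.} The principal difficulty is the $t$-derivative. Differentiating through a Fredholm determinant whose constituent operators (reflections, harmonic-oscillator semigroup, Hermite projection) do not commute requires carefully tracking commutators of the Hamiltonian with $\varrho_a$, and one must verify that all the resulting terms conspire into the specific combination $\varphi_n'(m\ts g(t)) + (2t-1) m\ts g(t)\ts \varphi_n(m\ts g(t))$ displayed in \eqref{eq:psibb}, and that the resulting perturbation really factorizes as the outer product \eqref{eq:Psibb}. Getting the exact scaling exponents, signs, and clean rank-one factorization --- as opposed to just the schematic form --- is where essentially all of the technical work lies.
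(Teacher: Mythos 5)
Your reduction of the joint density to $\sqrt2\,\partial_t\partial_a D_N(a,b,t)\big|_{a=b=\sqrt2 m}$ is probabilistically sound, but it is a genuinely different route from the paper's. The paper follows \cite{mqr} literally: it keeps a single-level barrier and raises it by $\ep$ only on a shrinking window $[t,t+\delta]$ (in the Dyson Brownian motion time parametrization, with $h_{\ep,\delta}(s)=\cosh(s)(r+\ep\uno{s\in[t,t+\delta]})$, so that the barrier is a straight line for the underlying Brownian bridge and the reflection principle is exact), then computes $\lim_{\delta\to0}\lim_{\ep\to0}(\ep\delta)^{-1}$ of the difference of two continuum-statistics determinants. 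The $\ep\to0$ limit turns into a derivative of the determinant (hence a trace), and the $\delta\to0$ limit (Lemma \ref{lem:limdelta}) is where the rank-one structure appears: the perturbation localizes at the single space-time point $(t,r\cosh(t))$ and factors as the product of the two normal derivatives $\p_w\varTheta_1^{\rm bb}(x,w)\,\p_w\varTheta_2^{\rm bb}(w,y)$ at $w=r\cosh(t)$ of the killed propagators on $[-L,t]$ and $[t,L]$; the explicit $\psi^{\rm bb}_{m,t}(n)$ then comes from an $L\to\infty$ computation using the contour representation of the Hermite functions (Lemma \ref{lem:limLPsi}). Your route avoids the sandwich between $\underline D_{\ep,\delta}$ and $\overline D_{\ep,\delta}$ and the double limit, but note that your two-level barrier is not in $H^1$, so Proposition \ref{prop:dbmcont} does not apply as stated; you must build $D_N$ as a composition of two $\Theta$-operators, exactly as the paper does for a different purpose in Proposition \ref{prop:22kernel}.

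The claim I would push back on is your mechanism for the rank-one factorization, namely that ``the two derivatives insert operators at the two endpoints of the two-level path-integral kernel.'' The $a$-derivative is not localized at the junction time: it differentiates the barrier level over all of $[0,t]$, i.e.\ it acts on the reflection and multiplication operators ${\sf M}_{a,\tau}\varrho_{a,\tau}$ and on the projection $\bar{\sf P}_{a\cosh(\tau)}$ inside the killed propagator, and the resulting $\p_a K$ is not rank one (probabilistically, $\p_a D_N|_{b=a}$ is the density of $\max_{[0,t]}$, whose argmax can lie anywhere in $[0,t]$). The localization at time $t$ and the rank-one structure can only emerge in the mixed derivative at $a=b$, using that the operator inside the determinant is $t$-independent at $a=b$ (so $\p_t K|_{a=b}=0$) and that the surviving boundary contributions at the junction factorize; verifying that cancellation, and extracting the precise $\psi^{\rm bb}_{m,t}$ from it, is the actual content of the proof and is not supplied by the endpoint-insertion heuristic. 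I would therefore classify the proposal as a viable alternative strategy whose decisive computational step remains open rather than a complete proof.
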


\begin{figure}[!ht]
	\centering
		\includegraphics[width=0.7\textwidth]{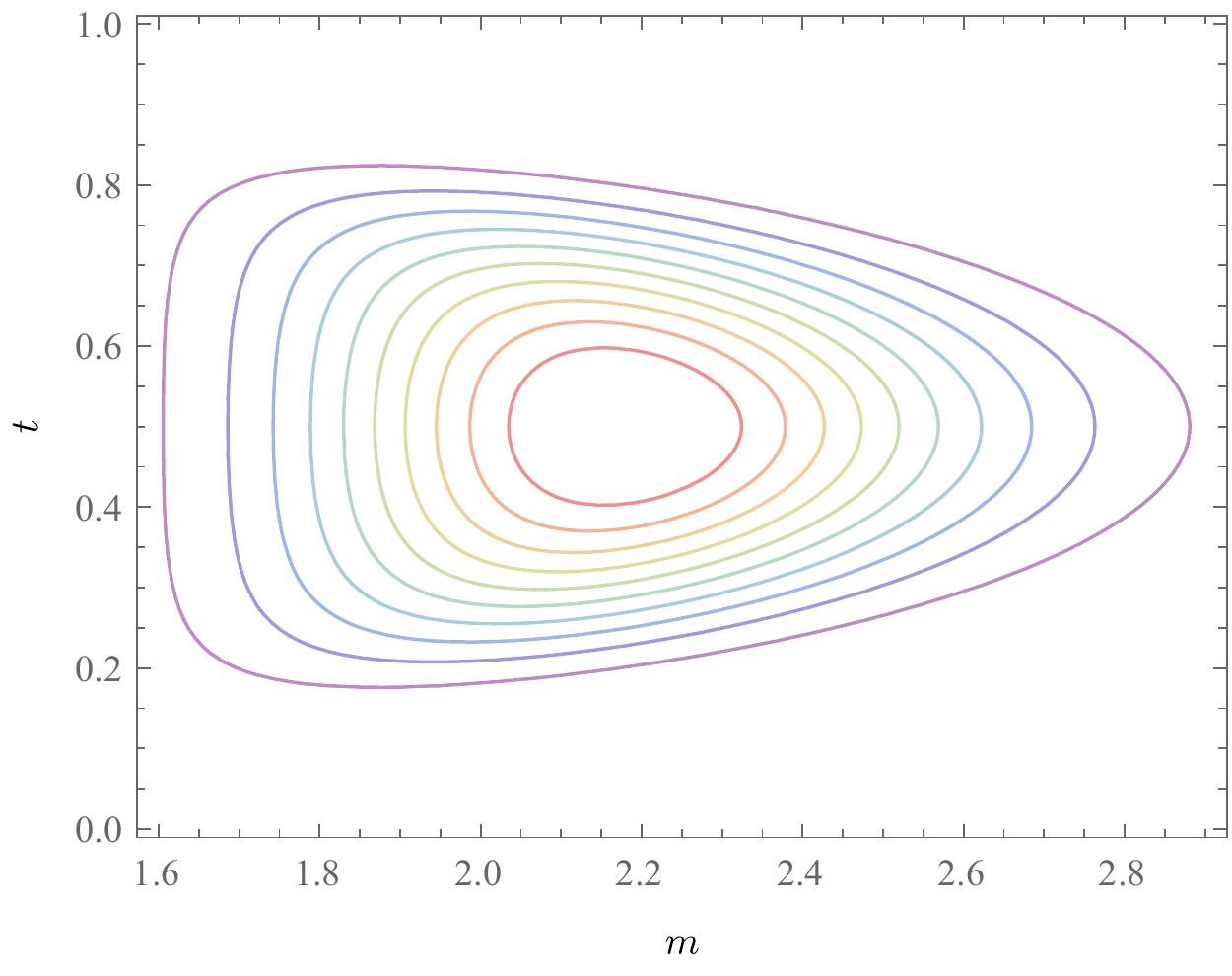}
	\caption{Contour plot of the joint density of $\cm_N^{\rm bb}$ and $\ct_N^{\rm bb}$ with $N=6$.}
	\label{fig:jdensity}
\end{figure}

The second equality in \eqref{eq:densBB} follows directly from the fact that $\Psi^{\rm bb}_{N,m,t}$ is rank one.
Note, moreover, that all the operators appearing above are finite-rank, and thus the formulas can be easily expressed in terms of the determinant and trace of finite matrices (see e.g. \cite[eqn. (3.6)]{nibm-loe}).
In particular, the numerical computation of $f^{\rm bb}_N$ is completely straightforward (see Figure \ref{fig:jdensity} for a contour plot).
We remark that this formula is entirely analogous to the one derived in \cite{mqr} for the joint density of $\cm$ and $\ct$ (see \eqref{eq:densairy2} below).
Furthermore, we obtain as a consequence a direct proof of the convergence of the rescaled argmax of $B_N(t)$ to that of $\aip(t)-t^2$:

\begin{cor}\label{cor:bbCvgce}
Let
\[\wt\cm_N^{\rm bb}=2N^{1/6}(\cm_N^{\rm bb}-\sqrt{N})\qqand\wt\ct_N^{\rm bb}=2N^{1/3}\big(\ct_N^{\rm bb}-\tfrac12\big).\]
Then we have the convergence in distribution
\[(\wt\cm_N^{\rm bb},\wt\ct_N^{\rm bb})\xrightarrow[N\to\infty]{}(\cm,\ct).\]
\end{cor}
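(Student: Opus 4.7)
The plan is to derive the joint convergence from pointwise convergence of the joint densities of $(\wt\cm_N^{\rm bb},\wt\ct_N^{\rm bb})$ to that of $(\cm,\ct)$, and then to promote this via Scheff\'e's lemma to convergence in total variation (and hence in distribution). Theorem \ref{thm:densBB} supplies a closed formula for the joint density $f_N^{\rm bb}$, so after a change of variables the joint density of the rescaled pair equals
\[\tilde f_N(\tilde m,\tilde t)=\tfrac{1}{4\sqrt N}\,f_N^{\rm bb}\!\bigl(\sqrt N+\tfrac{\tilde m}{2N^{1/6}},\,\tfrac12+\tfrac{\tilde t}{2N^{1/3}}\bigr),\]
and the task reduces to showing $\tilde f_N(\tilde m,\tilde t)\to f_{\cm,\ct}(\tilde m,\tilde t)$ pointwise, where $f_{\cm,\ct}$ is the joint density of $(\cm,\ct)$ derived in \cite{mqr} and recalled in \eqref{eq:densairy2}.

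The scalar prefactor is handled by \eqref{eq:LOElim}: under the scaling, $F_{{\rm LOE},N}(4m^2)\to F_{\rm GOE}(4^{1/3}\tilde m)$. For the operator-theoretic factor, the main input is the classical edge scaling of the Hermite kernel. After conjugating $\KGN$ by a multiplication operator $e^{\lambda x}$ (chosen to tame the Hermite tails) and rescaling via $x\mapsto\sqrt{2N}+x/(\sqrt 2\,N^{1/6})$, the Plancherel-Rotach asymptotics give
\[\tfrac{1}{\sqrt 2\, N^{1/6}}\KGN\!\bigl(\sqrt{2N}+\tfrac{x}{\sqrt 2\, N^{1/6}},\,\sqrt{2N}+\tfrac{y}{\sqrt 2\, N^{1/6}}\bigr)\longrightarrow\K(x,y)\]
in trace norm in suitable weighted $L^2$ spaces. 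In the same coordinates $\varrho_{\sqrt 2\,m}$ becomes $\varrho_{\tilde m}$, since $\sqrt 2\,m=\sqrt{2N}+\tilde m/(\sqrt 2\,N^{1/6})$; combining these yields trace-norm convergence of $\sI-\KGN\varrho_{\sqrt 2 m}\KGN$ to $\sI-\K\varrho_{\tilde m}\K$. Invertibility of the limiting operator, guaranteed by $F_{\rm GOE}(4^{1/3}\tilde m)>0$, then gives convergence of the resolvents.

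The heart of the argument is the scaling limit of the rank-one kernel $\Psi^{\rm bb}_{N,m,t}$. A short computation shows that under the scaling one has $mg(t)\approx\sqrt{2N}+(\tilde m+\tilde t^2)/(\sqrt 2\,N^{1/6})$, so the Plancherel-Rotach asymptotics for $\varphi_n$ and $\varphi_n'$ at the edge produce Airy and Airy-derivative terms shifted by $\tilde m+\tilde t^2$; together with $(2t-1)mg(t)\approx\sqrt 2\,\tilde t\,N^{1/6}$ this gives the combination $\Ai'+\tilde t\,\Ai$ characteristic of the formula of \cite{mqr}. The $N$-divergent factor $(t/(1-t))^{\mp n/2}$ appearing separately in $\psi^{\rm bb}_{m,t}$ and $\psi^{\rm bb}_{m,1-t}$ enters the rank-one product as $(t/(1-t))^{(n'-n)/2}$, which remains bounded for the indices that actually contribute (those within $O(N^{1/3})$ of $N$), and passes to a finite exponential weight in the variable $\xi=(N-n)/N^{1/3}$. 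Collecting all the prefactors with the Jacobian $1/(4\sqrt N)$ and matching them against the explicit constants in the formula of \cite{mqr} yields the pointwise limit $\tilde f_N\to f_{\cm,\ct}$, after which Scheff\'e's lemma concludes.

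The main obstacle is upgrading pointwise kernel convergence to the trace-norm convergence needed to pass to the limit in the Fredholm determinant and trace formulas. This requires effective, uniform Plancherel-Rotach estimates for both $\varphi_n$ and its derivative $\varphi_n'$, valid in the edge region and in the Airy tail, so that contributions from large $|x|$ and from $n$ away from $N$ in the Hermite sum are dominated by Airy-type integrable weights. Such estimates are by now standard in the line of work initiated in \cite{cqr,bcr} and used in \cite{mqr,nibm-loe}, and once they are in hand the remaining pieces fit together routinely. A secondary point is that to apply Scheff\'e one needs the limiting density to integrate to one; this follows from the a.s.\ finiteness of $\ct$ established in \cite{corwinHammond,mqr,pimentelStatMax}.
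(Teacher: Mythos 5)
Your proposal is correct and follows essentially the same route as the paper: Section \ref{sec:derivation} proves precisely the pointwise convergence $\frac{1}{4\sqrt{N}}f_N^{\rm bb}\bigl(\sqrt{N}+\frac{r}{2N^{1/6}},\frac12+\frac{t}{2N^{1/3}}\bigr)\to f(r,t)$ by combining the standard trace-norm edge limit $\wtKGN\to\K$ with the $L^2$ convergence of the two rank-one factors (Lemma \ref{lem:L2Phi}), and then concludes, implicitly via Scheff\'e, exactly as you do. The only real difference is in the execution of the key lemma: instead of applying uniform Plancherel--Rotach asymptotics to every $\varphi_n$ in the sum and tracking the weight $(t/(1-t))^{(n'-n)/2}$ over the contributing indices near $N$, the paper first collapses the whole Hermite sum into a single integral via the exact representation of $e^{\gamma(t){\sf D}}\KGN$ in Lemma \ref{lem:formulaK}, so that edge asymptotics are needed only for $\varphi_N$, $\varphi_{N-1}$ and their derivatives.
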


This result can also be derived from \eqref{eq:bbAiry2}, similarly to how the analogous result for last passage percolation (which is \eqref{eq:baikrains} together with \eqref{eq:johGOE}) was derived (under the assumption of uniqueness) in \cite{johansson}; here \eqref{eq:bbAiry2} shows that if one restricts the maximizer to lie in an interval of the form $[\frac12(1-N^{-1/3}K),\frac12(1+N^{-1/3}K)]$ for any fixed $K>0$ then the scaled restricted maximizer converges in distribution to $\argmax_{t\in[-K,K]}\{\aip(t)-t^2\}$, and from this the result can be obtained by using arguments in \cite{corwinHammond} to show that the probability that the maximizer lies outside such an interval can be made arbitrarily small as $N\to\infty$ provided that one chooses a large enough $K$.

Recall that, as we mentioned in Section \ref{sec:LPP}, the random variable $\ct$, which is the limit of $\wt\ct^{\rm lpp}_N$, is expected to appear similarly in a wide class of models in the KPZ class.
As far as we are aware, Corollary \ref{cor:bbCvgce}, together with Corollary \ref{cor:berbbCvgce} below, constitute the first rigorous proofs in this direction after the LPP case \eqref{eq:LPPcv}. 

We turn now to studying the rate at which of $\ct_N^{\rm bb}$ concentrates around its expected location $\frac12$.
For the case of the polymer endpoint distribution (which is centered around the origin), it was conjectured in \cite{halpZhang} that $\pp(|\ct|>t)\sim e^{-ct^3}$ for some $c>0$.
The upper bound was first proved in \cite{corwinHammond} with an unknown $c$.
Later on, \cite{quastelRemTails} gave a different proof of the upper bound with $c=\frac43$ together with a lower bound with a different constant.
The same paper conjectured that the right constant is in fact $\frac43$, which was then proved through a combination of the arguments of \cite{schehr} and \cite{baikLiechtySchehr}.

In our case note that, by Corollary \ref{cor:bbCvgce}, as $N$ gets large the location of the argmax $\ct_N^{\rm bb}$ concentrates around $\frac12$ at a scale of $N^{-1/3}$.
In view of the tail behavior of $\ct$ and the scaling in Corollary \ref{cor:bbCvgce}, one expects then that, optimally, the probability $\pp(|\ct_N^{\rm bb}-\frac12|>\ep)$ should decay like $e^{-\frac{32}3N\ep^3}$ for small $\ep$.
This can be thought of as the \emph{small deviation} regime for the concentration of $\ct_N^{\rm bb}$, and is the content of our next result, where we get the predicted upper bound as well as a lower bound with a different constant.

\begin{thm}\label{thm:TNtails}
  Let $\ep_1=\frac12\frac{e^{2/3}-1}{e^{2/3}+1}\approx0.16$ and $\ep_2>\frac12\frac{e^{2}-1}{e^{2}+1}\approx0.38$.
  There are $c_1,c_2,c_3,n_0>0$ such that
  \begin{equation}
    c_1\ts e^{-c_2N\ep^3}\leq\pp\big(|\ct_N^{\rm bb}-\tfrac12|>\ep\big)\leq c_3\ts e^{-\frac{32}3N\ep^3+\mathcal{O}(N^{2/3})},\label{eq:MNtails}
  \end{equation}
  with the upper bound holding uniformly in $N\in\nn$ and $\ep\in(0,\ep_2)$ satisfying $N\ep^3\geq n_0$ and the lower bound holding uniformly in $N\in\nn$ and $\ep\in(0,\ep_1)$ satisfying $N\ep^3\geq n_0$.
\end{thm}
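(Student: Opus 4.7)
The plan is to use the time-reversal symmetry $t\leftrightarrow 1-t$ of the bridge ensemble to reduce $\pp(|\ct_N^{\rm bb}-1/2|>\ep)$ to $2\pp(\ct_N^{\rm bb}>1/2+\ep)$, and then integrate the joint density provided by Theorem \ref{thm:densBB}. Since $\Psi^{\rm bb}_{N,m,t}$ is rank one, the first line of \eqref{eq:densBB} gives
\[
f_N^{\rm bb}(m,t)=F_{{\rm LOE},N}(4m^2)\,\tr\!\big[(\sI-\KGN\varrho_{\sqrt 2 m}\KGN)^{-1}\Psi^{\rm bb}_{N,m,t}\big],
\]
so the problem reduces to estimating this trace, integrated over $m>0$ and $t>1/2+\ep$. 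The key structural observation is that $g(1-t)=g(t)$, so both $\psi^{\rm bb}_{m,t}(n)$ and $\psi^{\rm bb}_{m,1-t}(n)$ involve Hermite values at the single point $mg(t)=m/\sqrt{2t(1-t)}$. For $t=1/2+\ep$ and $m$ of typical size $\sqrt N$, one has $mg(t)=\sqrt{2N}(1+u)$ with $u=(1-4\ep^2)^{-1/2}-1\approx 2\ep^2$, lying just above the spectral edge $\sqrt{2N}$, and the super-edge Plancherel--Rotach asymptotic
\[
\varphi_N\!\big(\sqrt{2N}(1+u)\big)\sim e^{-\tfrac{4\sqrt 2}{3}Nu^{3/2}}=e^{-\tfrac{32}{3}N\ep^3+\mathcal{O}(N\ep^5)}
\]
identifies the sharp exponent $32/3$.

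For the upper bound I would split the $m$-integral at a cutoff $m_\star=\sqrt N(1+C\ep^2)$ with $C$ a small constant. The contribution from $m>m_\star$ is absorbed into $\pp(\sqrt 2\tts\cm_N^{\rm bb}>m_\star)$, which by Theorem \ref{thm:nibm-loe} and the $e^{-\tfrac{2}{3}s^{3/2}}$ right tail of $F_{{\rm LOE},N}$ can be tuned to be comparable to $e^{-\tfrac{32}{3}N\ep^3}$. For $m\leq m_\star$, I would expand the trace in the Hermite basis as $\sum_{n,n'<N}\psi^{\rm bb}_{m,t}(n)A_{n,n'}(m)\psi^{\rm bb}_{m,1-t}(n')$ with $A(m)$ the matrix of $(\sI-\KGN\varrho_{\sqrt 2 m}\KGN)^{-1}$, control $\|A(m)\|$ uniformly via a lower bound on $F_{{\rm LOE},N}(4m^2)$ on $(0,m_\star]$, and combine the algebraic factor $((1-t)/t)^{(n-n')/2}$ with the $n$-dependent Hermite decay (faster for smaller $n$ since $mg(t)$ is even further past the $n$-th edge $\sqrt{2n}$). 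The dominant contribution comes from $n,n'\sim N-1$ and carries the rate displayed above; polynomial prefactors and the resolvent estimate are absorbed into the $e^{\mathcal{O}(N^{2/3})}$ correction.

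For the lower bound I would evaluate $f_N^{\rm bb}(m,t)$ directly at $t=1/2+\ep$ and a single well-chosen $m$ slightly below $\sqrt N$, use positivity of the resolvent and a uniform upper bound for its operator norm at that base point to extract a single diagonal term from the trace, and then apply a pointwise Plancherel--Rotach estimate for $\varphi_{N-1}(mg(t))$. This yields the $N\ep^3$ scaling but with a suboptimal constant $c_2>32/3$. The principal obstacle will be obtaining the sharp upper-bound constant uniformly across the full range $\ep\in(0,\ep_2)$: this requires Plancherel--Rotach estimates that are uniform in $\ep$ rather than merely in the limit $\ep\to 0$, together with a resolvent bound valid on the whole slab $m\in(0,m_\star]$. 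The thresholds $\ep_1,\ep_2$ in the statement correspond exactly to the boundaries of validity of these uniform asymptotic inputs, which involve per-step comparisons of the form $(t/(1-t))^{\pm 1}\lesssim e^{2/3}$ and $\lesssim e^{2}$ respectively.
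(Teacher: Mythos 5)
Your overall strategy (reduce by symmetry to $\pp(\ct_N^{\rm bb}>\tfrac12+\ep)$ and integrate the joint density of Theorem \ref{thm:densBB}) matches the paper's starting point, but both halves of your plan have gaps precisely where the real work lies. For the upper bound, cutting the $m$-integral at $m_\star=\sqrt N(1+C\ep^2)$ from \emph{above} leaves you with the slab $m\in(0,m_\star]$, on which there is no uniform lower bound for $F_{{\rm LOE},N}(4m^2)$ (it vanishes as $m\to0$), so ``control $\|A(m)\|$ uniformly via a lower bound on $F_{{\rm LOE},N}$'' fails. The paper instead cuts from \emph{below}: it splits off the event $\{\widehat\cm^{\rm dbm}_N\le\sqrt{2N}(1-N^{-1/3}\alpha t)\}$, kills it with the Ledoux--Rider small-deviation (left-tail) bound $F_{{\rm LOE},N}(4N(1-\delta))\le c_1e^{-c_2N^2\delta^3}$, and bounds the remaining integral through the determinant-difference form of the density together with \eqref{eq:inebound}, which requires only $\|\widehat\Psi^{\rm bb}_{m,s}\|_1$ and $\|\KGN\varrho_m\KGN\|_1$ and never a resolvent bound. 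Moreover your route to the constant $\tfrac{32}3$ is not sound as stated: a single factor $\varphi_N(\sqrt{2N}(1+u))$ with $u\approx2\ep^2$ contributes $e^{-\frac{4\sqrt2}3Nu^{3/2}}=e^{-\frac{16}3N\ep^3}$, not $e^{-\frac{32}3N\ep^3}$, and in any case the double sum over $n,n'$ carries the compensating factors $(t/(1-t))^{\pm n/2}$, which are as large as $e^{2N\ep}$ for $n\sim N$, so ``the dominant contribution comes from $n,n'\sim N-1$'' is exactly what needs proof. The paper's mechanism is to compute $\sum_n\widehat\psi^{\rm bb}_{m,s}(n)^2$ in closed form by orthogonality, recognize it as a derivative of $e^{-2s{\sf D}}\KGN$ on the diagonal, invoke the integral representation of Lemma \ref{lem:formulaK}, and apply Laplace's method; the constant then emerges as $\tfrac43t(\ep)^3$ with $t(\ep)=\tfrac12\log\tfrac{1+2\ep}{1-2\ep}\ge2\ep$ after passing to Dyson Brownian motion. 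This also explains $\ep_1,\ep_2$: they are just the conditions $t(\ep)<1/3$ and $t(\ep)<1$ required by the DBM-level Theorem \ref{thm:hatTNtails}, not thresholds for uniform Plancherel--Rotach asymptotics.

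The lower bound is the larger gap. Evaluating $f_N^{\rm bb}$ at a single point cannot bound a probability from below, and even pointwise there is no ``diagonal term to extract'': $\tr[({\sf I}-\KGN\varrho_{\sqrt2m}\KGN)^{-1}\Psi^{\rm bb}_{N,m,t}]$ is a single inner product whose Hermite-basis contributions have no definite sign (already $\tr\Psi^{\rm bb}_{N,m,t}=\sum_n2g(t)^3\bigl[\varphi_n'(mg(t))^2-(2t-1)^2m^2g(t)^2\varphi_n(mg(t))^2\bigr]$ has terms of both signs). The paper's lower bound is an entirely different and substantial argument: it bounds $\pp(\widehat\ct^{\rm dbm}_N>t)$ below by $\pp\bigl(\lambda_N(x)\le a\cosh(x)\,\forall x\le t;\ \lambda_N(t+s)>a\cosh(t+s)\bigr)$ with $a=\sqrt{2N}\cosh(\beta t)$, and its heart is the decorrelation estimate of Lemma \ref{lem:mainuppbound}, established through the $2\times2$-block Fredholm determinant of Proposition \ref{prop:22kernel}, the four operator-norm bounds of Lemma \ref{lem:4Rbounds}, and the finite-$N$ GUE tail estimate of Lemma \ref{lem:tailGUEN}. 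None of this machinery, nor a workable substitute for it, is present in your sketch.
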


The proof is based on the arguments of \cite{quastelRemTails} together with the small deviation estimates for $F_{{\rm LOE},N}(m)$ established in \cite{ledouxRider} (these estimates can be used to obtain similar tail bounds for $\cm^{\rm bb}_N$, see also \eqref{eq:iniUpperBound} and \eqref{eq:smalldevLOE} below).

\subsection{Non-intersecting Brownian motions on the half-line}

We consider now systems of non-intersecting Brownian motions which are restricted to stay in the positive half-line.
There are two standard ways to enforce this condition.
The first one is to put an absorbing boundary at the origin, which corresponds to conditioning the Brownian bridges to stay positive and leads to the process known as a \emph{Brownian excursion}.
In this case we will denote the $N$ paths by $B^{\rm be}_1(t)<\dotsm<B^{\rm be}_N(t)$ (that is, we consider $N$ independent Brownian excursions starting from and ending at the origin and condition them, in the sense of Doob, to not intersect).
The second possibility is to put a reflecting wall at the origin, which corresponds to considering \emph{reflected Brownian bridges}.
In this second case we will use the notation $B^{\rm rbb}_1(t)<\dotsm<B^{\rm rbb}_N(t)$.

In \cite{tracyWidomNIBrExc} Fredholm determinant formulas for the finite-dimensional distribution of both systems were derived.
The resulting formulas are analogous to those for non-intersecting Brownian bridges, and using the general result of \cite{bcr} this will allow us to derive formulas for the hitting probabilities of the top path of these systems.
Based on these we will derive an explicit Fredholm determinant formula for the maximal height of these systems.

As we will see, the resulting formulas have the same structure as the Fredholm determinant formula given for the Brownian bridge case in Theorem \ref{thm:nibm-loe}.
In fact, all that changes is that the Hermite kernel $\KGN$ gets replaced by
\begin{equation}\label{eq:defoddKN}
  \oKGN(x,y)=\sum_{n=0}^{N-1}\varphi_{2n+1}(x)\varphi_{2n+1}(y)
\end{equation}
in the absorbing case and by
\begin{equation}\label{eq:defevenKN}
  \eKGN(x,y)=\sum_{n=0}^{N-1}\varphi_{2n}(x)\varphi_{2n}(y)
\end{equation}
in the reflecting case, while the reflection operator $\varrho_m$ gets replaced by more complicated operators composed of a infinite sum of reflections,
\begin{equation}\label{eq:def-infinitevarrho}
  \varrho^{\rm be}_mf(x)=2\sum_{k=1}^{\infty}f(2km-x)\qquad\text{and}\qquad\varrho^{\rm rbb}_mf(x)=2\sum_{k=1}^{\infty}(-1)^{k+1} f(2km-x).
\end{equation}
We note that the Hermite functions with odd and even indices appearing in \eqref{eq:defoddKN} and \eqref{eq:defevenKN} are, respectively, odd and even.
This will be important in the proof of our formulas.

\begin{thm}\label{thm:be-formula}
  Let 
  \begin{equation}\label{eq:cmhsN}
	\cm_N^{\rm be}=\max_{t\in[0,1]}B^{\rm be}_N(t)\qqand\cm_N^{\rm rbb}=\max_{t\in[0,1]}B^{\rm rbb}_N(t).
  \end{equation}
  Then for any $m\geq 0$, and with $\star$ standing for either {\upshape be} or {\upshape rbb}, we have
  \begin{equation}\label{eq:be-formula}
    \pp\!\left(\sqrt{2}\cm^\star_N\leq m\right) = \det\!\left({\sf I}-\sKGN \varrho^{\star}_m \sKGN\right)_{L^2(\rr)}.
  \end{equation}
\end{thm}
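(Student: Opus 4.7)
The plan is to follow the three-step strategy that proves Theorem \ref{thm:nibm-loe}, with modifications to handle the extra barrier at the origin. The starting point is the extended-kernel Fredholm determinant formula of \cite{tracyWidomNIBrExc} for the joint distribution of $(B^\star_N(t_1),\ldots,B^\star_N(t_n))$, the spatial parts of those kernels being (after suitable similarity transforms) $\oKGN$ in the excursion case and $\eKGN$ in the reflected case; the restriction to odd (resp.\ even) Hermite indices encodes the odd/even symmetry forced by the absorbing (resp.\ reflecting) condition at $0$.

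\textbf{From finite-dimensional distributions to continuous hitting probabilities.} Second, apply the general path-integral framework of \cite{bcr} (the same one used in \cite{nibm-loe} for Brownian bridges on the line) to convert the extended-kernel formula into a single Fredholm determinant on $L^2(\rr)$ for the probability that the top path stays below an arbitrary continuous curve $g$, of the schematic form
\[
\pp\!\big(B^\star_N(t)\le g(t),\,\forall t\in[0,1]\big)=\det\!\big(\sI-\sKGN+\sKGN\ts\Theta_g\ts\sKGN\big)_{L^2(\rr)},
\]
where $\Theta_g$ is (up to conjugation by heat semigroups) built from the transition density of a Brownian motion conditioned to hit $g$ during $[0,1]$. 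The derivation is parallel to the one yielding the first equality in Theorem \ref{thm:nibm-loe}; the only substantive change is that the Hermite basis is replaced by its odd or even sub-basis, and one must check that the associated biorthogonalization still goes through. The symmetry of the initial and terminal conditions about $0$ is what makes this possible.

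\textbf{Specialization to a constant barrier.} Finally take $g\equiv m$. In the whole-line case a single application of the reflection principle gives $\Theta_m=\varrho_m$, recovering Theorem \ref{thm:nibm-loe}. Here, however, the top path is constrained by \emph{two} barriers: the one at $m$ and the one at $0$ coming from the half-line constraint (absorbing for excursions, reflecting for reflected bridges). The corresponding hitting kernel on $(0,m)$ is then given by the classical method of images, yielding an infinite, two-sided sum of terms of the form $p_t(x,y+2km)\pm p_t(x,-y+2km)$, with signs determined by the type of each boundary. Using the parity identities $\oKGN(x,-y)=-\oKGN(x,y)$ and $\eKGN(x,-y)=\eKGN(x,y)$ (which follow from $\varphi_n(-x)=(-1)^n\varphi_n(x)$), the two-sided method-of-images sum collapses to the one-sided sums defining $\varrho^{\rm be}_m$ and $\varrho^{\rm rbb}_m$ in \eqref{eq:def-infinitevarrho}, producing the formula \eqref{eq:be-formula}.

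\textbf{Main obstacle.} The chief technical point is the third step: one must justify convergence of the infinite reflection series once composed with $\sKGN$ (here the Gaussian decay of the Hermite functions should suffice) and carefully track the signs produced by the two boundaries. In particular, the alternating factor $(-1)^{k+1}$ in $\varrho^{\rm rbb}_m$ arises from a single absorbing barrier (at $m$) while the barrier at $0$ is reflecting, whereas in $\varrho^{\rm be}_m$ two absorbing barriers produce signs that, combined with the odd parity of $\oKGN$, leave all reflections with the same sign. A secondary but nontrivial issue is verifying that the biorthogonal structure used in the second step really does restrict consistently to the odd/even Hermite subsystems produced by the half-line dynamics of \cite{tracyWidomNIBrExc}; this is where the formulas \eqref{eq:defoddKN}--\eqref{eq:defevenKN} enter, replacing $\KGN$ in the Brownian-bridge argument.
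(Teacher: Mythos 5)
Your proposal is correct and follows essentially the same route as the paper: the extended kernels of \cite{tracyWidomNIBrExc} with odd/even Hermite indices, the path-integral/continuum-statistics machinery of \cite{bcr} adapted to $L^2_{{\rm odd}/{\rm even}}(\rr)$, and a method-of-images computation whose two-sided image sum collapses to $\varrho^{\rm be}_m$ and $\varrho^{\rm rbb}_m$ thanks to the parity of $\oKGN$ and $\eKGN$. The only (cosmetic) difference is that the paper folds the boundary condition at the origin entirely into the parity of the kernel and then performs a single symmetric reflection computation on the two-sided strip $|x|<r\ttsm\cosh(t)$, identical for both models, whereas you keep the mixed-boundary strip $(0,m)$ picture; the two formulations are equivalent by odd/even unfolding, and your identification of the remaining technical issues (convergence of the infinite reflection series against $\sKGN$, and the sign bookkeeping) matches what the paper has to address.
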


It is natural to wonder whether these two probability distributions have an interpretation in terms of RMT, as in the case of $\cm^{\rm bb}_N$, but we are not aware of any.

We turn now to the distribution of the argmax for the top path of non-intersecting Brownian excursions and reflected Brownian bridges.
To that end we introduce, for $m\geq0$ and $t\in(0,1)$, the functions
\begin{multline}\label{eq:psibe}
\psi^{\rm be}_{m,t}(n)=2\psi^{\rm bb}_{m,t}(n)+2\sqrt{2}\tts g(t)^{3/2}\,\Bigl(\frac{t}{1-t}\Bigr)^{-\frac{n}{2}}\sum_{k=1}^{\infty}e^{2k(k+1)(2t-1)m^2\tts g(t)^2}\\
\times\bigl[\varphi_n'\bigl((2k+1)m\tts g(t)\bigr)+(2k+1)(2t-1)m\tts g(t)\varphi_n\bigl((2k+1)m\tts g(t)\bigr)\bigr],
\end{multline}
and
\begin{multline}\label{eq:psirbb}
\psi^{\rm rbb}_{m,t}(n)=2\psi^{\rm bb}_{m,t}(n)+2\sqrt{2}\tts g(t)^{3/2}\,\Bigl(\frac{t}{1-t}\Bigr)^{-\frac{n}{2}}\sum_{k=1}^{\infty}(-1)^ke^{2k(k+1)(2t-1)m^2\tts g(t)^2}\\
\times\bigl[\varphi_n'\bigl((2k+1)m\tts g(t)\bigr)+(2k+1)(2t-1)m\tts g(t)\varphi_n\bigl((2k+1)m\tts g(t)\bigr)\bigr],
\end{multline}
where $\psi^{\rm bb}_{m,t}$ and $g(t)$ were defined in \eqref{eq:at} and \eqref{eq:psibb}, and the rank one kernels
\begin{equation}\label{eq:Psiberbb}
\begin{split}
	\Psi^{\rm be}_{m,t}(x,y)&=\Bigl(\sum_{n=0}^{N-1}\varphi_{2n+1}(x)\psi^{\rm be}_{m,t}(2n+1)\Bigr)\Bigl(\sum_{n=0}^{N-1}\varphi_{2n+1}(x)\psi^{\rm be}_{m,1-t}(2n+1)\Bigr),\\
	\Psi^{\rm rbb}_{m,t}(x,y)&=\Bigl(\sum_{n=0}^{N-1}\varphi_{2n}(x)\psi^{\rm rbb}_{m,t}(2n)\Bigr)\Bigl(\sum_{n=0}^{N-1}\varphi_{2n}(x)\psi^{\rm rbb}_{m,1-t}(2n)\Bigr).
\end{split}
\end{equation}

\begin{thm}\label{thm:half-line-argmax}
  Let 
  \begin{equation}\label{eq:defTNberbb}
    \ct_N^{\rm be}:=\argmax_{t\in[0,1]}B^{\rm be}_N(t) \qqand \ct_N^{\rm rbb}:=\argmax_{t\in[0,1]}B^{\rm rbb}_N(t)
  \end{equation}
	and let $f^{\rm be}_N(m,t)$ and $f^{\rm rbb}_N(m,t)$ be the joint densities of $(\cm^{\rm be}_N,\ct^{\rm be}_N)$ and $(\cm^{\rm rbb}_N,\ct^{\rm rbb}_N)$, respectively.
	Then for any $t\in(0,1)$ and any $m>0$, and with $\star$ standing for either $\rm be$ or $\rm rbb$, we have
	\begin{align}\label{eq:densBERBB}
	f^\star_N(m,t)&=\tr\!\left[({\sf I}-\sKGN\varrho^{\star}_{\sqrt{2}m}\sKGN)^{-1}\Psi^{\star}_{m,t}\right]\det\!\left({\sf I}-\sKGN \varrho^{\star}_{\sqrt{2}m} \sKGN\right)\\
	&=\det\!\left({\sf I}-\sKGN \varrho^{\star}_{\sqrt{2}m} \sKGN+\Psi^{\star}_{m,t}\right)-\det\!\left({\sf I}-\sKGN \varrho^{\star}_{\sqrt{2}m} \sKGN\right).
	\end{align}
\end{thm}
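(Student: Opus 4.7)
The proof will follow the same blueprint as that of Theorem~\ref{thm:densBB}, itself an adaptation to finite $N$ of the argument of \cite{mqr}. The two cases $\star=\mathrm{be}$ and $\star=\mathrm{rbb}$ can be treated in parallel, differing only through the use of odd or even Hermite functions in $\sKGN$ and the signs $(-1)^{k+1}$ appearing in the image expansion of $\varrho^\star_m$.

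The first step is to generalize Theorem~\ref{thm:be-formula} from the constant barrier $m$ to an arbitrary (sufficiently regular) upper barrier $g\colon[0,1]\to\rr$. Feeding the extended kernel formulas of \cite{tracyWidomNIBrExc} into the ``path-integral'' machinery of \cite{cqr,bcr} should yield a formula of the shape
\[
  \pp\bigl(\sqrt{2}\tts B_N^\star(s)\le g(s)\text{ for all }s\in[0,1]\bigr)=\det\bigl(\sI-\sKGN\sT_g^\star\sKGN\bigr)_{L^2(\rr)},
\]
with $\sT_g^\star$ a trace-class operator built from the Brownian propagator with the appropriate (absorbing or reflecting) boundary at $0$, reducing to $\varrho^\star_m$ when $g\equiv m$. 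Now specialize $g$ to the two-level step $g_{a,b,t}(s)=a\tts\uno{s\in[0,t]}+b\tts\uno{s\in(t,1]}$ and denote the resulting determinant by $G^\star(a,b,t)$. Writing $M_L=\max_{s\le t}B_N^\star(s)$ and $M_R=\max_{s>t}B_N^\star(s)$, the identity $\{\cm_N^\star\le m,\ts\ct_N^\star\le t\}=\{M_L\le m,\tts M_R\le M_L\}$ yields, after a short computation, $\partial_m\pp(\cm_N^\star\le m,\ts\ct_N^\star\le t)=\sqrt{2}\tts\partial_a G^\star(a,b,t)\big|_{a=b=\sqrt{2}m}$, and hence $f_N^\star(m,t)=\sqrt{2}\tts\partial_t\partial_a G^\star(a,b,t)\big|_{a=b=\sqrt{2}m}$.

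The mixed partial is evaluated via Jacobi's trace formula. Since the two-level barrier differs from the constant one $\sqrt{2}m$ only on an interval whose moving endpoint is $t$, the $t$-derivative acts as evaluation at $s=t$, while the $a$-derivative produces a rank-one term. After inserting the Hermite expansion of $\sKGN$ and using the harmonic-oscillator semigroup to propagate each $\varphi_n$ between times $t$ and $1-t$ (yielding the factor $(t/(1-t))^{-n/2}$ and the amplitude $\sqrt{2}\tts g(t)^{3/2}$), the derivative $\partial_a \varrho^\star_a$ applied to the $k$-th image centered at $(2k+1)mg(t)$ contributes exactly the Gaussian weight $e^{2k(k+1)(2t-1)m^2 g(t)^2}$ and the bracket $\varphi_n'((2k+1)mg(t))+(2k+1)(2t-1)mg(t)\varphi_n((2k+1)mg(t))$. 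Summing over $k$, with the alternating signs dictated by the choice of $\star$, reconstructs the functions $\psi^\star_{m,t}$ of \eqref{eq:psibe}--\eqref{eq:psirbb} and the rank-one kernel $\Psi^\star_{m,t}$ of \eqref{eq:Psiberbb}, producing the first equality in \eqref{eq:densBERBB}. The second equality then follows from the rank-one identity $\det(A+u\otimes v)-\det(A)=\det(A)\tts\tr[A^{-1}(u\otimes v)]$ applied with $A=\sI-\sKGN\varrho^\star_{\sqrt{2}m}\sKGN$, which is invertible because the Fredholm determinant in Theorem~\ref{thm:be-formula} is strictly positive for $m>0$.

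The main technical difficulty, absent from the Brownian bridge case, will be the handling of the infinite image series defining $\varrho^\star_{\sqrt{2}m}$. One must justify termwise differentiation in both $a$ and $t$ under the Fredholm determinant, obtain trace-class bounds that are summable in the image index $k$, and verify that the Gaussian propagator factors and alternating signs generated by the successive reflections assemble exactly into the compact expressions \eqref{eq:psibe}--\eqref{eq:psirbb}. Once this image-charge bookkeeping is in place, the remaining structural pieces — the rank-one identification, the invertibility of the base operator, and the equivalence of the two forms of $f_N^\star$ — go through essentially as in the Brownian bridge setting.
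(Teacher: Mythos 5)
Your overall route, once unwound, is the same as the paper's: the mixed partial $\partial_t\partial_a G^\star(a,b,t)\big|_{a=b}$ is precisely the iterated limit $\lim_{\delta\to0}\lim_{\ep\to0}\frac{1}{\ep\delta}$ of Fredholm determinants for the bump barrier $h_{\ep,\delta}$ used in the proof of Theorem \ref{thm:whargmax}, and your reduction of the second identity in \eqref{eq:densBERBB} to the first via the rank-one determinant formula is exactly what the paper does. The CDF identity $\partial_m\pp(\cm^\star_N\le m,\,\ct^\star_N\le t)=\sqrt{2}\,\partial_aG^\star(a,b,t)\big|_{a=b=\sqrt2 m}$ is correct. (A minor point: the two-level step is not in $H^1$, so Proposition \ref{prop:becont} does not apply verbatim; one should instead compose $\Theta^{(a)}_{[\cdot,\tau]}$ with $\Theta^{(b)}_{[\tau,\cdot]}$, as is done in Proposition \ref{prop:22kernel} for the analogous mixed event.)

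The genuine gap is that the proposal stops exactly where the proof starts; the two computations on which everything rests are asserted rather than performed, and the mechanism you describe for them is not quite right. First, one must show that the mixed derivative of the operator localizes to a rank-one kernel. This is the content of Lemma \ref{lem:limdelta}, and in the BE/RBB case it requires expanding ${\sf R}^{(r),{\rm be}/{\rm rbb}}_{[\ell_1,\ell_2]}$ as the image series \eqref{eq:infiniteRefl}, proving that the images with $|k|\geq2$ contribute nothing in the limit (via Gaussian bounds summable in $k$), and observing that the two surviving contributions at $k=\pm1$ combine into one by the parity of the odd/even Hermite functions. Your account of where the rank-one structure comes from is off: the $a$-derivative alone does not produce a rank-one operator (it differentiates the entire image series), and the factorization arises only from the joint localization in time (at $s=t$) and in space (at the barrier level), i.e.\ from the combination of the two derivatives. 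Second, the explicit constants in \eqref{eq:psibe}--\eqref{eq:psirbb} \dash---the factor $(t/(1-t))^{-n/2}$, the amplitude $\sqrt2\, g(t)^{3/2}$, and above all the weights $e^{2k(k+1)(2t-1)m^2g(t)^2}$\dash---do not come from ``$\partial_a\varrho^\star_a$ applied to the $k$-th image'': the operator $\varrho^\star_m$ in \eqref{eq:def-infinitevarrho} carries no Gaussian weights at all. They come from evaluating $e^{L{\sf D}}\oeKGN$ applied to the derivatives of the off-center reflection kernels and taking $L\to\infty$, which the paper does via the contour-integral representation of the Hermite functions (Lemma \ref{lem:limLPsi}); it is only there that the $L$-dependence cancels and the stated kernel $\Psi^\star_{m,t}$ emerges. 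Without these two computations, the formula for $\Psi^\star_{m,t}$ is reverse-engineered from the statement rather than derived.
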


Note that, in principle, Theorem \ref{thm:be-formula} can be obtained as a corollary of this result by integrating the joint densities with respect to $t$ (see \cite[Sec. 3]{mqr}, where this is done in the case of the Airy$_2$ process).
 
By KPZ universality, it is expected that both $B^{\rm be}_N$ and $B^{\rm rbb}_N$, suitably rescaled, converge to the Airy$_2$ process (in fact, in the sense of finite-dimensional distributions, this can be proved just as for the case of Brownian bridges, based on the formulas appearing in \cite{tracyWidomNIBrExc}, see \eqref{eq:beextkernel}--\eqref{eq:defKbeext} below).
Hence one should expect the analog of Corollary \ref{cor:bbCvgce} to hold.
This is the content of our last result, which we prove based on our formulas for $f_N^{\rm be}$ and $f_N^{\rm rbb}$.

\begin{cor}\label{cor:berbbCvgce}
  Let
  \begin{gather}
	\wt\cm_N^{\rm be}=2^{7/6}N^{1/6}(\cm_N^{\rm be}-\sqrt{2N}),\qquad\wt\ct_N^{\rm be}=2^{4/3}N^{1/3}\big(\ct_N^{\rm be}-\tfrac12\big),\\
    \wt\cm_N^{\rm rbb}=2^{7/6}N^{1/6}(\cm_N^{\rm rbb}-\sqrt{2N}),\qquad\wt\ct_N^{\rm rbb}=2^{4/3}N^{1/3}\big(\ct_N^{\rm rbb}-\tfrac12\big).
  \end{gather}
Then we have, in distribution,
\[(\wt\cm_N^{\rm be},\wt\ct_N^{\rm be})\xrightarrow[N\to\infty]{}(\cm,\ct)\qqand(\wt\cm_N^{\rm rbb},\wt\ct_N^{\rm rbb})\xrightarrow[N\to\infty]{}(\cm,\ct).\]	
\end{cor}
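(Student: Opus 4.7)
The plan is to insert the rescaling $m=\sqrt{2N}+M/(2^{7/6}N^{1/6})$ and $t=\tfrac12+\tau/(2^{4/3}N^{1/3})$ into the joint density formula \eqref{eq:densBERBB} of Theorem \ref{thm:half-line-argmax} and identify the $N\to\infty$ limit of the rescaled density with the joint density of $(\cm,\ct)$ obtained in \cite{mqr}, mirroring the strategy suggested for Corollary \ref{cor:bbCvgce}. Under this change of variables $\sqrt{2}m=2\sqrt{N}+M/(2^{2/3}N^{1/6})$ is precisely the edge-scaling point of a Hermite kernel of size $2N$; since both $\oKGN$ and $\eKGN$ are ``halves'' of the rank-$2N$ Hermite kernel and the edge limit of $\varphi_n$ does not distinguish parity at leading order, a standard Plancherel--Rotach argument gives that $\sKGN$ (for either choice of $\star$), conjugated by the factor $2^{2/3}N^{1/6}$, converges in trace norm on any half-line $[L,\infty)$ to the Airy kernel $\K$.

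The two nontrivial points are the asymptotics of the reflection operator $\varrho^\star_{\sqrt{2}m}$ and of the rank-one operator $\Psi^\star_{m,t}$. In $\varrho^\star_{\sqrt{2}m}f(x)=2\sum_{k\geq1}(\pm1)^{k+1}f(2\sqrt{2}km-x)$ the reflection point $2\sqrt{2}km\sim 4k\sqrt{N}$ lies, for $k\geq2$, an $\mathcal{O}(\sqrt{N})$ distance beyond the edge $2\sqrt{N}$, so in the integrand of $\sKGN\ts\varrho^\star_{\sqrt{2}m}\sKGN$ one of the Hermite functions must be evaluated at a point $c\sqrt{2n}$ with $c>1$ fixed; the classical Plancherel--Rotach tail bound $|\varphi_n(c\sqrt{2n})|\leq e^{-n\phi(c)}$, with $\phi(c)=c\sqrt{c^2-1}-\cosh^{-1}(c)>0$, yields decay of order $e^{-cN}$ that kills these terms. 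Only $k=1$ survives, producing the single reflection at $M$ appearing in the MQR formula (the factor $2$ and the alternating sign in the rbb case become immaterial since the $k\geq2$ terms vanish). The same tail bound handles the sums over $k\geq1$ appearing in $\psi^\star_{m,t}$: the Hermite factor $\varphi_n((2k+1)mg(t))$ with $n\leq 2N-1$ and argument of order $(2k+1)\cdot 2\sqrt{N}$ decays as $e^{-\phi(2k+1)N}$, while the potentially dangerous prefactor $e^{2k(k+1)(2t-1)m^2g(t)^2}$ is, under our scaling, merely $e^{\mathcal{O}(N^{2/3})}$, easily absorbed. Hence $\psi^\star_{m,t}\to 2\psi^{\rm bb}_{m,t}$ pointwise in $n$ after scaling for both $\star={\rm be}$ and ${\rm rbb}$, and the rescaled $\Psi^\star_{m,t}$ matches the rank-one operator appearing in the MQR formula for the joint density of $(\cm,\ct)$.

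With trace-norm convergence of each building block in hand, continuity of $\det$ and $\tr$ yields pointwise convergence of the rescaled joint density (including the Jacobian $(2^{7/6}N^{1/6})^{-1}(2^{4/3}N^{1/3})^{-1}$) to the joint density of $(\cm,\ct)$ computed in \cite{mqr}. To upgrade this to convergence in distribution we invoke Scheff\'e's lemma: the rescaled pair has a joint density by construction and $(\cm,\ct)$ admits one as well, so pointwise convergence of densities to a probability density implies $L^1$ convergence and hence convergence in law. The main obstacle is the careful bookkeeping of the Plancherel--Rotach tails: one needs the decay bounds on $\varphi_n$ at points past the edge to be uniform in $n\leq 2N-1$, in $k\geq1$, and in $M,\tau$ ranging over compact sets, which is classical but tedious, especially near the threshold $c\downarrow 1$ where $\phi(c)$ degenerates and one must track subleading corrections to rule out an accumulation of contributions from small-$k$ tails.
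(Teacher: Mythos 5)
Your overall strategy is the same as the paper's: rescale the joint density from Theorem \ref{thm:half-line-argmax}, prove convergence of the operator building blocks via Hermite asymptotics (with the $k\geq1$ tails killed by $e^{-cNk^2}$ bounds, exactly as in the paper), and upgrade pointwise convergence of densities to convergence in law via Scheff\'e. The rank-one part is treated essentially as in the paper, which splits $\psi^\star=2\psi^{\rm bb}+(\text{$k$-sum})$, computes the first piece exactly by the contour-integral representation of the Hermite functions, and shows the $k$-sum vanishes.

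There is, however, a genuine error in your treatment of $\sKGN\varrho^\star_{\sqrt2m}\sKGN$, and it matters because you assert two things that are individually false. First, $\kappa_{2N}\tts\sKGN(\tilde x,\tilde y)$ with $\kappa_{2N}=2^{-2/3}N^{-1/6}$ and $\tilde x=\sqrt{4N}+\kappa_{2N}x$ converges to $\tfrac12\K(x,y)$, not to $\K(x,y)$: writing $\oKGN=\tfrac12\bigl({\sf K}^{\rm bb}_{2N}-{\sf K}^{\rm bb}_{2N}\varrho_0\bigr)$ (and similarly for $\eKGN$ with a $+$ sign), the cross term ${\sf K}^{\rm bb}_{2N}(\tilde x,-\tilde y)$ vanishes at the edge, so the half-kernel picks up exactly half of the full rank-$2N$ edge limit. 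Second, the overall factor $2$ in $\varrho^{\star}_m$ multiplies the surviving $k=1$ term as well, so it is not immaterial. With your two claims one gets the right answer $\K\varrho_r\K$ only by compensating errors: the honest single-edge computation gives $2\cdot\tfrac12\K\ts\varrho_r\ts\tfrac12\K=\tfrac12\K\varrho_r\K$, and the missing factor of $2$ comes from the fact that $\sKGN$, being built from functions of fixed parity, acts nontrivially near \emph{both} edges $\pm\sqrt{4N}$; the operator $\sKGN\varrho_{\sqrt{2}m}\sKGN$ has a $2\times2$ block structure of the form $\bigl(\begin{smallmatrix}1&\mp1\\\mp1&1\end{smallmatrix}\bigr)\otimes\tfrac14\K\varrho_r\K$, which is unitarily equivalent to $\tfrac12\K\varrho_r\K\oplus0$, so that only after including the second edge (equivalently, after identifying $L^2_{\rm odd/even}(\rr)$ with $L^2([0,\infty))$, where the natural correlation kernel is $2\sKGN$) does one recover $\det(\sI-\K\varrho_r\K)$. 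The same halving affects each factor of the rank-one kernel $\Psi^\star_{m,t}$, where it is compensated precisely by the explicit factor $2$ in $\psi^\star=2\psi^{\rm bb}+\cdots$ that you discard; so your bookkeeping is internally inconsistent, and as written the argument would not survive a check of the constants. (To be fair, the paper's own write-up details only the rank-one part and leaves the kernel part implicit, but your proposal makes affirmative claims about it that need to be corrected along the lines above.)
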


We remark that the convergence of $\wt\cm_N^{\rm be}$ and of $\wt\cm_N^{\rm rbb}$ to $\cm$ has in fact already been proved by \citet{liechty-nibmdope}, who used discrete orthogonal polynomials and Riemann-Hilbert techniques.

In view of the corollary, and analogously to Theorem \ref{thm:TNtails}, we conjecture that the tails of $\ct_N^{\rm be}$ and $\ct_N^{\rm rbb}$ should satisfy
\begin{equation}\label{eq:tails-conj}
  \ct_N^{\rm be}\sim c\tts e^{-\frac{64}3N\ep^3},\qquad\ct_N^{\rm rbb}\sim c\tts e^{-\frac{64}3N\ep^3}.
\end{equation}
The proof of Theorem \ref{thm:TNtails} should in principle also be applicable to these cases.
However, the estimates needed to get the analogous result become much more involved due to the more complicated expressions for $f^{\rm be}$ and $f^{\rm rbb}$.
In addition, in order to obtain \eqref{eq:tails-conj} from these arguments one would need a replacement for the tail estimate obtained in \cite{ledouxRider} for the small deviations of the Laguerre Orthogonal Ensemble (see \eqref{eq:riderledoux} below), which was obtained using random matrix arguments which most likely would not apply to this case.
For these reasons, we opted not to pursue this any further in this paper.

\subsection*{Outline} 
The rest of the paper is devoted to proofs.
Section \ref{sec:bm_halfspace} contains some preliminaries and the continuum statistics formulas which we will use, as well as the proof of Theorem \ref{thm:be-formula}.
Section \ref{sec:derivation} is devoted to the derivation of the joint densities for the three models (Theorems \ref{thm:densBB} and \ref{thm:half-line-argmax}).
The proof of the tail estimate, Theorem \ref{thm:TNtails}, is contained in Section \ref{sec:bounds}.
Appendix \ref{sec:estimates} is devoted to the proof a precise small deviation estimate for the largest eigenvalue in an $N\times N$ GUE matrix.

\section{Path-integral kernels and continuum statistics formulas for non-intersecting Brownian paths}
\label{sec:bm_halfspace}

The basic tool we will use in the proof of all of our results is a ``continuum statistics'' formula for the probability that the top line of a system of non-intersecting Brownian paths (in each of the three cases which we consider) stays below a given function on an interval $[a,b]$ with $0<a<b<1$.
Such a formula was derived in \cite{bcr} for non-intersecting Brownian bridges, and was the basis of our arguments in \cite{nibm-loe}.
In this section we will recall this formula, and derive the analogous formula for the case of non-intersecting Brownian excursions and non-intersecting reflected Brownian motions.

In everything that follows we will use the abbreviations BB, BE and RBB in the text to refer to the models of non-intersecting Brownian bridges, Brownian excursions and reflected Brownian bridges.
Similarly, to each notation we will use a superscript $\star$ in objects like $\cm^\star_N$ when we write formulas which are valid for either of the three models.
So, for instance, $\cm^\star_N$ refers to $\cm^{\rm bb}_N$, $\cm^{\rm be}_N$, or $\cm^{\rm rbb}_N$, and correspondingly $B^\star_N(t)$ refers to $B_N(t)$, $B^{\rm be}_N(t)$, or $B^{\rm rbb}_N(t)$.
We will sometimes also use the superscript be/rbb when dealing with formulas which are relevant only in those two cases.

The finite-dimensional distributions of a system of $N$ non-intersecting BB/BE/RBB can be written \cite{tracyWidomNIBrExc} in terms of a Fredholm determinant as follows:
\begin{equation}\label{eq:beextkernel}
\pp\Bigl(\sqrt{2}B^{\star}_N\bigl(\tfrac{e^{2t_j}}{1+e^{2t_j}}\bigr)\leq r_j\sech(t_j),\,j=1,\dotsc,n\Bigr)=\det\!\left({\sf I}-{\rm f}{\sf K}^{\star}_{{\rm ext},N}{\rm f}\right)_{L^2(\{t_1,\ldots,t_n\}\times\rr)}
\end{equation}
with ${\rm f}(t_j,x)=\uno{x\in(r_j,\infty)}$ and where the extended kernels ${\sf K}^{\star}_{{\rm ext},N}$ are defined as \noeqref{eq:defKrbbext}
\begin{align}\label{eq:defKbbext}
  {\sf K}^{\rm bb}_{{\rm ext},N}(s,x;t,y)&=\begin{cases}
    \sum_{n=0}^{N-1}e^{n(s-t)}\varphi_{n}(x)\varphi_{n}(y) &\mbox{if } s\geq t,\\
    -\sum_{n=N}^{\infty}e^{n(s-t)}\varphi_{n}(x)\varphi_{n}(y) &\mbox{if } s<t,
  \end{cases}\\
  \label{eq:defKbeext}
  {\sf K}^{\rm be}_{{\rm ext},N}(s,x;t,y)&=\begin{cases}
    2\sum_{n=0}^{N-1}e^{(2n+1)(s-t)}\varphi_{2n+1}(x)\varphi_{2n+1}(y) &\mbox{if } s\geq t,\\
    -2\sum_{n=N}^{\infty}e^{(2n+1)(s-t)}\varphi_{2n+1}(x)\varphi_{2n+1}(y) &\mbox{if } s<t,
  \end{cases}\\
  \label{eq:defKrbbext}
  {\sf K}^{\rm rbb}_{{\rm ext},N}(s,x;t,y)&=\begin{cases}
    2\sum_{n=0}^{N-1}e^{2n(s-t)}\varphi_{2n}(x)\varphi_{2n}(y) &\mbox{if } s\geq t,\\
    -2\sum_{n=N}^{\infty}e^{2n(s-t)}\varphi_{2n}(x)\varphi_{2n}(y) &\mbox{if } s<t,
  \end{cases}
\end{align}
and where, we recall, the Hermite functions $\varphi_n$ were defined after \eqref{eq:defKN}.
We note that the value of the determinants in \eqref{eq:beextkernel} in the BE and RBB cases do not change if we replace the corresponding kernels ${\sf K}^{{\rm be}/{\rm rbb}}_{{\rm ext},N}$ by $\frac12{\sf K}^{{\rm be}/{\rm rbb}}_{{\rm ext},N}$ and the projection ${\rm f}$ by $\bar{\rm f}(t_j,x)=\uno{x\in(-\infty,-r_j)\cup(r_j,\infty)}$.
This can be seen at the level of the series expansion of the Fredholm determinant, by using the fact that $\varphi_{2n}$ is even and $\varphi_{2n+1}$ is odd to show that the value of $\det\!\left[{\sf K}^{{\rm be}/{\rm rbb}}_{{\rm ext},N}(t_i,x_i;t_j,x_j)\right]_{i,j=1}^k$ does not change if some of the $x_i$'s are replaced by $-x_i$.
This fact will be important below.

In order to obtain the continuum statistics formulas which we are interested in we need to let $t_1,\dotsc,t_n$ be a fine mesh of the our interval $[a,b]$ and then take $n\to\infty$.
However, note that the Fredholm determinants in \eqref{eq:beextkernel} are being computed in the Hilbert space $L^2(\{t_1,\ldots,t_n\}\times\rr)$, which makes it very hard to make sense of the $n\to\infty$ limit.
To get around this, the idea is to use \cite[Thm. 3.3]{bcr} to turn this Fredholm determinant into the Fredholm determinant of a certain ``path-integral'' kernel computed on $L^2(\rr)$.
As we mentioned, this was done in the BB case in \cite{bcr}.
We describe the result next.

\subsection{Non-intersecting Brownian bridges}

Let ${\sf D}$ denote the differential operator
\begin{equation}\label{eq:defD}
  {\sf D}=-\tfrac{1}{2}(\Delta-x^2+1)
\end{equation}
($\Delta$ is the Laplacian on $\rr$). 
Using the recursion satisfied by the Hermite polynomials one can check that ${\sf D} \varphi_n=n \varphi_n$.
In particular, the Hermite kernel $\KGN$ defined in \eqref{eq:defKN} is nothing but the projection operator onto the space span$\{\varphi_0,\dotsc,\varphi_{N-1}\}$ associated to the first $N$ eigenvalues of ${\sf D}$.
In particular, even though $e^{t{\sf D}}$ is well-defined in general only for $t\leq 0$, $e^{t{\sf D}}\KGN$ is well defined for all $t$, and its integral kernel is given by
\begin{equation}\label{eq:etDKGN}
e^{t{\sf D}}\KGN(x,y)=\sum_{n=0}^{N-1}e^{tn}\varphi_n(x) \varphi_n(y).
\end{equation}
Furthermore, the extended kernel ${\sf K}^{\rm bb}_{{\rm ext},N}$ defined in \eqref{eq:defKbbext} satisfies, for each $s,t$,
\begin{equation}\label{eq:extBB}
  {\sf K}^{\rm bb}_{{\rm ext},N}(s,\cdot;t,\cdot)=-e^{(s-t){\sf D}}\uno{s<t}+e^{(s-t){\sf D}}\KGN.
\end{equation}
This means that the extended kernel has the structure of the kernels considered in \cite[Sec. 3]{bcr}.
One can check, moreover, that the hypotheses of \cite[Thm. 3.3]{bcr} are satisfied, and ultimately lead to the continuum statistics formula for the top line of BB which follows.
For fixed $\ell_1<\ell_2$, consider a function $g\in H^1([\ell_1,\ell_2])$ (i.e. both $g$ and its derivative are in $L^2([\ell_1,\ell_2])$) and introduce an operator $\Theta_{[\ell_1,\ell_2]}^{g,{\rm bb}}$ acting on $L^2(\rr)$ as follows: $\Theta_{[\ell_1,\ell_2]}^{g,{\rm bb}}f(x)=u(\ell_2,x)$, where $u(\ell_2,\cdot)$ is the solution at time $\ell_2$ of the boundary value problem
\begin{equation}\label{eq:bound-pde}
\begin{aligned}
\partial_t u+{\sf D}u&=0\quad\text{for}\ x<g(t),\ t\in(\ell_1,\ell_2)\\
u(\ell_1,x)&=f(x)\mathbf{1}_{x<g(\ell_1)}\\
u(t,x)&=0\quad \text{for}\ x\geq g(t),\ t\in[\ell_1,\ell_2].
\end{aligned}
\end{equation}

\begin{prop}[{\cite[Cor. 4.5]{bcr}}]\label{prop:dbmcont}
For any $\ell_1<\ell_2$ and $g\in H^1([\ell_1,\ell_2])$ we have
\begin{multline}\label{eq:dbmcont}
\pp\left(\sqrt{2}B_N\big(\tfrac{e^{2s}}{1+e^{2s}}\big)<g(s)\sech(s)~\forall\,s\in[\ell_1,\ell_2]\right)\\
=\det\!\left({\sf I}-\KGN+\Theta_{[\ell_1,\ell_2]}^{g,{\rm bb}} e^{(\ell_2-\ell_1){\sf D}}\KGN\right).
\end{multline}
\end{prop}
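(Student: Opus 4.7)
The proposition is the specialization to non-intersecting Brownian bridges of the general path-integral machinery developed in \cite{bcr}, so my plan is essentially to verify that the hypotheses of \cite[Thm.~3.3]{bcr} are met by ${\sf K}^{\rm bb}_{{\rm ext},N}$ and then trace through its conclusion, following \cite[Cor.~4.5]{bcr}.

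First I would start from the finite-dimensional formula \eqref{eq:beextkernel} in the BB case: on a uniform mesh $\ell_1 = t_0 < t_1 < \cdots < t_n = \ell_2$ with spacing $\Delta t$, and with $r_j$ chosen to match the event $\{\sqrt{2}B_N(e^{2t_j}/(1+e^{2t_j})) < g(t_j)\sech(t_j)\}$, the discretized probability equals the Fredholm determinant of ${\sf I} - {\rm f}\tts{\sf K}^{\rm bb}_{{\rm ext},N}\tts{\rm f}$ on $L^2(\{t_0,\dotsc,t_n\}\times\rr)$ with ${\rm f}(t_j,x)=\uno{x>r_j}$. The crucial structural hypothesis of \cite[Thm.~3.3]{bcr} is precisely that the extended kernel split as a backward propagator on the range of $\KGN$ plus a forward propagator with a minus sign, which is exactly the content of \eqref{eq:extBB}. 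The conclusion of that theorem rewrites the above Fredholm determinant as a Fredholm determinant on $L^2(\rr)$ of the form
\begin{equation*}
\det\!\Bigl({\sf I} - \KGN + {\sf P}_{g(t_n)}^- \tts e^{\Delta t\tts{\sf D}} {\sf P}_{g(t_{n-1})}^-\tts \cdots\tts e^{\Delta t\tts{\sf D}} {\sf P}_{g(t_0)}^-\tts e^{(\ell_2-\ell_1){\sf D}}\KGN\Bigr),
\end{equation*}
where ${\sf P}_a^-$ projects onto $(-\infty,a)$. The backward-in-time factor $e^{(\ell_2-\ell_1){\sf D}}\KGN$ is well-defined (and trace-class) because $\KGN$ cuts off the divergent high modes of $e^{t{\sf D}}$ for $t>0$, as noted in \eqref{eq:etDKGN}.

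The main analytic step, and the one I expect to be the real obstacle, is the passage $n\to\infty$. The alternating product
\begin{equation*}
{\sf P}_{g(t_n)}^- \tts e^{\Delta t\tts{\sf D}} {\sf P}_{g(t_{n-1})}^-\tts \cdots \tts e^{\Delta t\tts{\sf D}} {\sf P}_{g(t_0)}^-
\end{equation*}
is a Trotter-type discretization of the killing semigroup attached to the parabolic boundary value problem \eqref{eq:bound-pde}, and therefore should converge to $\Theta_{[\ell_1,\ell_2]}^{g,{\rm bb}}$. The assumption $g\in H^1([\ell_1,\ell_2])$ is precisely what controls the time-varying barrier well enough for this to work; composing on the right with the trace-class operator $e^{(\ell_2-\ell_1){\sf D}}\KGN$ upgrades strong convergence of the approximating semigroups to trace-norm convergence of the relevant perturbation of ${\sf I}-\KGN$, and continuity of the Fredholm determinant in trace norm then delivers \eqref{eq:dbmcont}.

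In short, steps one and two are essentially bookkeeping (matching the event, verifying the splitting \eqref{eq:extBB}, and invoking the abstract theorem), while the third is where the genuine work lies. Since the Trotter convergence has already been carried out in full generality in \cite{bcr} using Feynman--Kac machinery together with heat-kernel estimates for the harmonic oscillator semigroup, I would not rederive it but simply cite it, yielding the statement as an immediate corollary of \cite[Thm.~3.3]{bcr}.
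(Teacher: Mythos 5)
Your proposal is correct and follows essentially the same route as the paper (which simply invokes \cite[Thm.~3.3 and Cor.~4.5]{bcr}): verify the splitting \eqref{eq:extBB}, apply the path-integral theorem to turn the multi-time determinant into a product of projections and propagators on $L^2(\rr)$, and pass to the continuum limit to identify $\Theta^{g,{\rm bb}}_{[\ell_1,\ell_2]}$; the paper merely states the Dyson Brownian motion version \eqref{eq:dbmcontdbm} first and transfers it to Brownian bridges via the distributional identity \eqref{eq:dbm-bb}, which is equivalent to working directly with ${\sf K}^{\rm bb}_{{\rm ext},N}$ as you do. The only slip is the sign in your displayed Trotter product: the propagators between consecutive projections must be $e^{-\Delta t\ts{\sf D}}$ (only the final factor $e^{(\ell_2-\ell_1){\sf D}}\KGN$ carries a positive exponent, tamed by $\KGN$), consistent with your own description of the product as a discretization of the killing semigroup for \eqref{eq:bound-pde}.
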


This formula was derived in \cite[Sec. 4.1]{bcr} for the top line $\lambda_N(t)$ of the stationary GUE Dyson Brownian motion.
It reads
\begin{equation}\label{eq:dbmcontdbm}
\pp\left(\lambda_N(s)<g(s)~\forall\,s\in[\ell_1,\ell_2]\right)
=\det\!\left({\sf I}-\KGN+\Theta_{[\ell_1,\ell_2]}^{g,{\rm bb}} e^{(\ell_2-\ell_1){\sf D}}\KGN\right).
\end{equation}
Since $\lambda_N$ satisfies 
\begin{equation}
\tfrac1{\sqrt{2}}\lambda_N(s)\sech(s)\stackrel{\rm (d)}{=}B_N\tsm\big(\tfrac{e^{2s}}{1+e^{2s}}\big),\label{eq:dbm-bb}
\end{equation}
this formula leads directly to \eqref{eq:dbmcont}.
See \cite[Sec. 2]{nibm-loe} for more details.

It is shown in \cite[Prop. 2.2]{nibm-loe} that the integral kernel of $\Theta_{[\ell_1,\ell_2]}^{g,{\rm bb}}$ can be expressed explicitly in terms of certain hitting probabilities for a Brownian bridge (which is also consistent with our use of the superscript bb).
Remarkably (see also \eqref{eq:bekernel} and \eqref{eq:defLambda} below in the case of BE/RBB), the case we are interested in, which is $g(t)=r\ttsm\cosh(t)$, leads to hitting probabilities of a Brownian bridges to a straight line, which can be computed explicitly by the reflection principle, and lead to the following explicit formula for $\Theta_{[\ell_1,\ell_2]}^{g,{\rm bb}}$ (see \cite[(2.6)]{nibm-loe}):
\begin{equation}\label{eq:thetarRpre}
\Theta^{(r),{\rm bb}}_{[\ell_1,\ell_2]}:=\Theta_{[\ell_1,\ell_2]}^{g(t)=r\ttsm\cosh(t),{\rm bb}}=\bar {\sf P}_{r\ttsm\cosh(\ell_1)}\Big[e^{-(\ell_2-\ell_1){\sf D}}-{\sf R}^{(r),{\rm bb}}_{[\ell_1,\ell_2]}\Big]\bar {\sf P}_{r\ttsm\cosh(\ell_2)},
\end{equation}
with 
\[\bar{\sf P}_mf(x)=({\sf I}-{\sf P}_m)f(x)=f(x)\uno{x\leq m}\]
and where the reflection operator ${\sf R}^{(r),{\rm bb}}_{[\ell_1,\ell_2]}$ is given by
\begin{equation}\label{eq:reflOperBB}
  {\sf R}^{(r),{\rm bb}}_{[\ell_1,\ell_2]}(x,y)=e^{\frac12(y^2-x^2)+\ell_2}\,\tfrac{1}{\sqrt{4\pi(\beta-\alpha)}}e^{-r(e^{\ell_2}y-e^{\ell_1}x)+r^2(\beta-\alpha)-(e^{\ell_1}x+e^{\ell_2}y-2r(\alpha+\beta)-r)^2/(4(\beta-\alpha))},
\end{equation}
with 
\begin{equation}
\alpha=\tfrac14e^{2\ell_1}\qqand\beta=\tfrac14e^{2\ell_2}.\label{eq:alphabeta}
\end{equation}
It is worth mentioning that \eqref{eq:dbmcont}--\eqref{eq:alphabeta} has recently been used in \cite{ferrariVetoHardEdge} to study non-intersecting Brownian bridges conditioned to stay below a given threshold (very shortly after \cite{liechtyWang} provided an alternative treatment based on a Riemann-Hilbert analysis).

\subsection{Non-intersecting Brownian excursions and reflected Brownian bridges}
We turn now to the case of BE/RBB.
To proceed as in the case of Brownian bridges, we need to express the extended kernels ${\sf K}^{{\rm be}/{\rm bb}}_{{\rm ext},N}$ similarly to \eqref{eq:extBB}.
A crucial fact which is implicit in \eqref{eq:extBB} is that, as $N\to\infty$, $\KGN$ becomes the identity (this is because $\big(\varphi_n)_{n\geq0}$ is a complete orthonormal basis of $L^2(\rr)$).
However, this is not the case for $\oeKGN$ (defined in \eqref{eq:defoddKN}/\eqref{eq:defevenKN}), because due to the parity property of the Hermite functions mentioned above, $\oKGN$ converges to the projection onto the subspace $L^2_{\rm odd}(\rr)$ of $L^2(\rr)$ consisting of odd functions, and similarly $\eKGN$ converges to the projection onto the subspace $L^2_{\rm even}(\rr)$ of $L^2(\rr)$ consisting of even functions.
The solution is to replace the Hilbert space $L^2(\{t_1,\ldots,t_n\}\times\rr)$, which is isomorphic to $\bigoplus_{i=1}^nL^2(\rr)$, with $\bigoplus_{i=1}^nL^2_{{\rm odd}/{\rm even}}(\rr)$.
Note that, after performing the replacement explained after \eqref{eq:defKbeext}, this does not change the value of the determinant because $\bar{\rm f}$ maps odd/even functions to odd/even functions and ${\sf K}^{{\rm be}/{\rm rbb}}_{{\rm ext},N}$ maps $\bigoplus_{i=1}^nL^2(\rr)$ to $\bigoplus_{i=1}^nL^2_{{\rm odd}/{\rm even}}(\rr)$.

We end up then with $\det\!\left({\sf I}-\frac12\bar{\rm f}{\sf K}^{{\rm be}/{\rm rbb}}_{{\rm ext},N}\bar{\rm f}\right)_{\bigoplus_{i=1}^nL^2_{{\rm odd}/{\rm even}}(\rr)}$ replacing the right hand side of \eqref{eq:beextkernel}.
From the fact that ${\sf D} \varphi_n=n \varphi_n$ one can check directly that for each $s,t$ we have
\begin{equation}\label{eq:extBE}
  \tfrac12{\sf K}^{{\rm be}/{\rm rbb}}_{{\rm ext},N}(s,\cdot;t,\cdot)=-e^{(s-t){\sf D}}\uno{s<t}+e^{(s-t){\sf D}}\oeKGN
\end{equation}
(with $\oeKGN$ defined in \eqref{eq:defoddKN}/\eqref{eq:defevenKN}) as an operator acting on $L^2_{{\rm odd}/{\rm even}}(\rr)$, and moreover that $\oeKGN$ satisfy
\begin{equation}\label{eq:etDoKGN}
e^{t{\sf D}}\oKGN(x,y)=\sum_{n=0}^{N-1}e^{(2n+1)t}\varphi_{2n+1}(x) \varphi_{2n+1}(y)\qand e^{t{\sf D}}\eKGN(x,y)=\sum_{n=0}^{N-1}e^{2nt}\varphi_{2n}(x) \varphi_{2n}(y)
\end{equation}
for all $t\in\rr$ (note, in particular, that $\tfrac12{\sf K}^{{\rm be}/{\rm rbb}}_{{\rm ext},N}(t,\cdot;t,\cdot)=\oeKGN$ for all $t$).
An additional difficulty in applying \cite[Thm. 3.3]{bcr} in these cases is that in that result it is assumed that the Fredholm determinant acts on a space of the form $L^2(\{t_1,\ldots,t_n\}\times X)$ with $(X,\Sigma,\mu)$ some measure space\footnote{The case of reflected Brownian bridges is slightly simpler, because the space $L^2_{\rm even}(\rr)$ can be identified with $L^2([0,\infty))$, and thus we may regard our extended kernel in that case as acting on $L^2(\{t_1,\ldots,t_n\}\times[0,\infty))$.}.
But that hypothesis was made in \cite{bcr} only for convenience, in order to handle the general setting addressed there, and it is not hard to check that the result carries through to our case without difficulties.
One can check easily, once again, that the hypotheses of that result are satisfied in our present case, which allows us to deduce that the right hand side of \eqref{eq:beextkernel} equals (in the case of BE/RBB)
\begin{equation}\label{eq:pathintbebb}
\det\!\left({\sf I}-\oeKGN+{\bar{\sf Q}}_{r_1}e^{(t_1-t_2){\sf D}}{\bar{\sf Q}}_{r_2}\dotsc{\bar{\sf Q}}_{r_n}e^{(t_n-t_1){\sf D}}\oeKGN\right)_{L^2_{{\rm odd}/{\rm even}}(\rr)},
\end{equation}
where ${\bar{\sf Q}}_{r}f(x)=f(x)\uno{|x|<r}$.
Note that at this point we may change the Hilbert space on which the Fredholm determinant is being computed to $L^2(\rr)$, because $\oeKGN$ are, respectively, the projections onto span$\{ \varphi_1,\varphi_3,\dotsc,\varphi_{2N-1}\}$ (which is a subspace of $L^2_{\rm odd}(\rr)$), and onto  span$\{ \varphi_0,\varphi_2,\dotsc,\varphi_{2N}\}$ (which is a subspace of $L^2_{\rm even}(\rr)$).

The same argument as in the case of Dyson Brownian motion (see \cite[Sec. 4.1.1]{bcr}) now allows us to take a limit of the last formula as the size of a mesh in $t$ goes to $0$.
The result is analogous to Proposition \ref{prop:dbmcont}.
Given a function $g\in H^1([\ell_1,\ell_2])$, define an operator $\Theta_{[\ell_1,\ell_2]}^{g,{\rm be}/{\rm rbb}}$ acting on $L^2(\rr)$ as follows: $\Theta_{[\ell_1,\ell_2]}^{g,{\rm be}/{\rm rbb}} f(x)=u(\ell_2,x)$, where $u(\ell_2,\cdot)$ is the solution at time $\ell_2$ of the boundary value problem\footnote{There is a minor detail missing in the derivation in \cite{bcr}. In view of the order in which the points $r_i$ appear in \eqref{eq:pathintbebb}, the boundary value PDE appearing in the continuum limit (given in the present case by \eqref{eq:bound-pde-be}) should be defined using $\hat g(t)=g(\ell_1+\ell_2-t)$ instead of $g$ itself. However, by the symmetry of $\oeKGN$ one may then take the adjoint of the resulting operator inside the kernel and use the cyclic property of the Fredholm determinant and the identity $(\Theta_{[\ell_1,\ell_2]}^{\hat g,{\rm be}/{\rm rbb}})^*=\Theta_{[\ell_1,\ell_2]}^{g,{\rm be}/{\rm rbb}}$ to obtain \eqref{eq:becont}.}
\begin{equation}\label{eq:bound-pde-be}
\begin{aligned}
\partial_t u+{\sf D}u&=0\quad\text{for}\ |x|<g(t),\ t\in(\ell_1,\ell_2)\\
u(\ell_1,x)&=f(x)\uno{|x|<g(\ell_1)}\\
u(t,x)&=0\quad \text{for}\ |x|\geq g(t).
\end{aligned}
\end{equation}

\begin{prop}\label{prop:becont}
  Given $g\in H^1([\ell_1,\ell_2])$, we have
  \begin{multline}\label{eq:becont}
    \pp\left(\sqrt{2}B^{{\rm be}/{\rm rbb}}_N\bigl(\tfrac{e^{2t}}{1+e^{2t}}\bigr)\leq g(t)\sech(t)~\forall\,t\in[\ell_1,\ell_2]\right)\\
    =\det\!\left({\sf I}-\oeKGN+\Theta_{[\ell_1,\ell_2]}^{g,{\rm be}/{\rm rbb}} e^{(\ell_2-\ell_1){\sf D}}\oeKGN\right).
  \end{multline}
\end{prop}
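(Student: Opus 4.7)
The proof closely parallels the derivation of Proposition \ref{prop:dbmcont} via \cite[Thm. 3.3]{bcr}, adapted to the parity-constrained Hilbert spaces $L^2_{\rm odd}(\rr)$ (in the BE case) and $L^2_{\rm even}(\rr)$ (in the RBB case). Most of the preparation has been done above. After invoking the parity trick discussed just after \eqref{eq:defKrbbext} to replace $({\rm f}, {\sf K}^{{\rm be}/{\rm rbb}}_{{\rm ext},N})$ with $(\bar{\rm f}, \tfrac12 {\sf K}^{{\rm be}/{\rm rbb}}_{{\rm ext},N})$, the representation \eqref{eq:extBE} places the extended kernel exactly in the framework of \cite[Thm. 3.3]{bcr}. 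Applying that theorem\dash---whose proof uses only general properties of the spatial Hilbert space, hence carries through to the parity-restricted setting\dash---yields the discrete path-integral formula \eqref{eq:pathintbebb}. The trace-class hypotheses required by that theorem are standard and follow from the rapid decay of the Hermite functions together with \eqref{eq:etDoKGN}.

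Next, specialize the mesh $t_1 < t_2 < \dotsm < t_n$ to be a partition of $[\ell_1,\ell_2]$ with $r_j = g(t_j)$ and let its spacing tend to zero, as in \cite[Sec. 4.1.1]{bcr}. The composition $\bar{\sf Q}_{r_1} e^{(t_1-t_2){\sf D}} \bar{\sf Q}_{r_2} \dotsm \bar{\sf Q}_{r_n}$ is a Trotter-type discretization of the propagator of the absorbing boundary value problem \eqref{eq:bound-pde-be}, whose solution operator is precisely $\Theta^{g,{\rm be}/{\rm rbb}}_{[\ell_1,\ell_2]}$; when paired with the remaining factor $e^{(t_n-t_1){\sf D}} \oeKGN = e^{(\ell_2-\ell_1){\sf D}} \oeKGN$ and combined with continuity of the Fredholm determinant in trace norm, the limit produces \eqref{eq:becont}. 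The cosmetic subtlety indicated in the footnote, namely that the ordering of the $r_j$'s in \eqref{eq:pathintbebb} naturally produces $\hat g(t) = g(\ell_1+\ell_2-t)$ in the limit, is resolved by taking an adjoint inside the determinant, using symmetry of $\oeKGN$, the cyclic property of Fredholm determinants, and the identity $(\Theta^{\hat g,{\rm be}/{\rm rbb}}_{[\ell_1,\ell_2]})^* = \Theta^{g,{\rm be}/{\rm rbb}}_{[\ell_1,\ell_2]}$.

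The main technical step is this continuum-limit passage, which requires uniform trace-norm control of the discretized propagator when composed with $\oeKGN$. Two features help substantially: first, $\oeKGN$ has finite rank, so the question reduces to convergence of finitely many matrix entries obtained by pairing the discrete propagator against Hermite functions, for which the decay of $\varphi_n$ provides ample room; second, the hypothesis $g \in H^1([\ell_1,\ell_2])$ ensures well-posedness of \eqref{eq:bound-pde-be} in exactly the sense used in \cite{bcr} for the BB case, so the Feynman--Kac identification of the limiting operator with $\Theta^{g,{\rm be}/{\rm rbb}}_{[\ell_1,\ell_2]}$ goes through verbatim. Once the limit is established on $L^2_{{\rm odd}/{\rm even}}(\rr)$, moving the determinant to $L^2(\rr)$ in the statement of \eqref{eq:becont} is automatic, since $\oeKGN$ is the projection onto a finite-dimensional subspace of the appropriate parity subspace and every operator appearing inside the determinant preserves that subspace.
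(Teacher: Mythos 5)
Your proposal is correct and follows essentially the same route as the paper: the parity replacement of $({\rm f},{\sf K}^{{\rm be}/{\rm rbb}}_{{\rm ext},N})$ by $(\bar{\rm f},\tfrac12{\sf K}^{{\rm be}/{\rm rbb}}_{{\rm ext},N})$ on $\bigoplus_i L^2_{{\rm odd}/{\rm even}}(\rr)$, the application of \cite[Thm. 3.3]{bcr} via \eqref{eq:extBE} to reach \eqref{eq:pathintbebb}, the mesh limit as in \cite[Sec. 4.1.1]{bcr}, and the adjoint/cyclicity fix for the $\hat g$ ordering issue. Nothing essential is missing.
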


The PDE appearing in \eqref{eq:bound-pde-be} can be turned into a standard heat equation (with a modified boundary condition) by a suitable change of variables (see the proof of \cite[Prop. 2.2]{nibm-loe}, the computation in this case is exactly the same), and this leads to the following formula for the integral kernel of $\Theta_{[\ell_1,\ell_2]}^{g,{\rm be}/{\rm rbb}}$:
\begin{multline}\label{eq:bekernel}
  \Theta_{[\ell_1,\ell_2]}^{g,{\rm be}/{\rm rbb}}(x,y) = e^{\frac12(y^2-x^2)+\ell_2}\,
  \frac{e^{-(e^{\ell_1}x-e^{\ell_2}y)^2/(4(\beta-\alpha))}}{\sqrt{4\pi(\beta-\alpha)}}\\
  \times\mathbb{P}_{\subalign{\hat{b}(\alpha)&=e^{\ell_1}x,\\\hat{b}(\beta)&=e^{\ell_2}y}}\!\left(\bigl|\hat{b}(t)\bigr|\leq\sqrt{4t}\,g\bigl(\tfrac12\log(4t)\bigr)~\forall\, t\in[\alpha,\beta]\right),
\end{multline}
where the probability is computed with respect to a Brownian bridge\footnote{Note that the definition \eqref{eq:bekernel} is the same for the two cases BE/RBB; the probability on the right hand is always computed using the reflected Brownian bridge $|\hat{b}(t)|$. Our choice of superscript here, be/rbb, is intended to be consistent with the fact that this is the operator appearing in both cases in \eqref{eq:becont}.} $\hat{b}(t)$ from $e^{\ell_1}x$ at time $\alpha$ to $e^{\ell_2}y$ at time $\beta$ and with diffusion coefficient 2, and where $\alpha$ and $\beta$ are as in \eqref{eq:alphabeta}.

\subsection{Proof of Theorem \ref{thm:be-formula}}
Our interest is the case $g(t)=r\ttsm\cosh(t)$.
With this choice, the probability in the last formula reduces to the probability that a reflected Brownian bridge stays below the linear barrier $2rt+\tfrac{1}{2}r$.
Taking $-\ell_1=\ell_2=L$, we get
\begin{equation}\label{eq:limit-kernel-be}
\pp\left(\cm^{{\rm be}/{\rm rbb}}_N\leq r\right)
=\lim_{L\to \infty}\det\!\left({\sf I}-\oeKGN+\Theta^{(r),{\rm be}/{\rm rbb}}_{[-L,L]} e^{2L{\sf D}}\oeKGN\right)
\end{equation}
with
\begin{multline}\label{eq:defLambda}
	\Theta_{[\ell_1,\ell_2]}^{(r),{\rm be}/{\rm rbb}}(x,y):= \Theta_{[\ell_1,\ell_2]}^{g(t)=r\ttsm\cosh(t),{\rm be}/{\rm rbb}}(x,y)\\
	\quad=e^{\frac12(y^2-x^2)+\ell_2}\,\frac{e^{-(e^{\ell_1}x-e^{\ell_2}y)^2/(4(\beta-\alpha))}}{\sqrt{4\pi(\beta-\alpha)}}
	\mathbb{P}_{\subalign{\hat{b}(\alpha)&=e^{\ell_1}x,\\\hat{b}(\beta)&=e^{\ell_2}y}}\!\left(\bigl|\hat{b}(t)\bigr|\leq2rt+\tfrac12r~\forall\, t\in[\alpha,\beta]\right).
\end{multline}
Note that taking $r\to\infty$ in this formula corresponds to the solution of \eqref{eq:bound-pde-be} with $g=\infty$, which is just $e^{-(\ell_2-\ell_1){\sf D}}$, and on the other hand it corresponds to simply replacing the probability by 1.
(In particular, this implies that for any $\ell_1<\ell_2$ the kernel of the operator $e^{-(\ell_2-\ell_1){\sf D}}$ can be written as
\begin{equation}
	e^{-(\ell_2-\ell_1){\sf D}}(x,y)=e^{\frac12(y^2-x^2)+\ell_2}\,
  \frac{e^{-(e^{\ell_1}x-e^{\ell_2}y)^2/(4(\beta-\alpha))}}{\sqrt{4\pi(\beta-\alpha)}},\label{eq:etDkernel}
\end{equation}
which we will use in Section \ref{sec:bounds}).
As a consequence, we may rewrite our operator as follows:
\begin{equation}\label{eq:LambdaL}
\Theta_{[\ell_1,\ell_2]}^{(r),{\rm be}/{\rm rbb}}=\bar{\sf Q}_{r\ttsm\cosh(\ell_1)}\left(e^{-(\ell_2-\ell_1){\sf D}}-{\sf R}^{(r),{\rm be}/{\rm rbb}}_{[\ell_1,\ell_2]}\right)\bar{\sf Q}_{r\ttsm\cosh(\ell_2)},
\end{equation}
where ${\sf R}^{(r),{\rm be}/{\rm rbb}}_{[\ell_1,\ell_2]}$ is the reflection term
\begin{multline}\label{eq:Reflprob}
	{\sf R}^{(r),{\rm be}/{\rm rbb}}_{[\ell_1,\ell_2]}(x,y)\\
	=e^{\frac12(y^2-x^2)+\ell_2}\,
  	\frac{e^{-(e^{\ell_1}x-e^{\ell_2}y)^2/(4(\beta-\alpha))}}{\sqrt{4\pi(\beta-\alpha)}}\,\mathbb{P}_{\subalign{\hat{b}(\alpha)&=e^{\ell_1}x,\\\hat{b}(\beta)&=e^{\ell_2}y}}\!\left(\max_{t\in[\alpha,\beta]}\,\bigl|\hat{b}(t)\bigr|>2rt+\tfrac{1}{2}r\right).
\end{multline}
The last probability can be computed explicitly using the reflection principle, and equals (see \cite{doob})
\begin{multline}
\sum_{k=1}^\infty\Bigl[e^{-2((2k-1)^2ac+bd)+2(2k-1)(ad+bc)}+e^{-2((2k-1)^2ac+bd)-2(2k-1)(ad+bc)}\\
-e^{-8k^2ac+4k(bc-ad)}-e^{-8k^2ac-4k(bc-ad)}\Bigr]
\end{multline}
with $a=\frac{r(4\alpha+1)}{2\sqrt{2}},\;b=\frac{e^{\ell_1}x}{\sqrt{2}},\; c=\sqrt{2}r+\frac{r(4\alpha+1)}{2\sqrt{2}(\beta-\alpha)}$ and $d=\frac{e^{\ell_2}y}{\sqrt{2}(\beta-\alpha)}$.
Through this formula ${\sf R}^{(r),{\rm be}/{\rm rbb}}_{[\ell_1,\ell_2]}$ gets written as $\sum_{k=1}^\infty\big[{\sf R}_{1,k}+{\sf R}_{2,k}-{\sf R}_{3,k}-{\sf R}_{4,k}\big]$ in the obvious way.
Focusing on ${\sf R}_{1,k}$, accounting for the prefactors on the right hand side of \eqref{eq:Reflprob}, and then comparing with the formula of the reflection operator ${\sf R}^{(r),{\rm bb}}_{[\ell_1,\ell_2]}$ in \eqref{eq:reflOperBB}, we have
\begin{align}
{\sf R}_{1,k}(x,y)&=\tfrac{e^{\frac12(y^2-x^2)+\ell_2}}{\sqrt{4\pi(\beta-\alpha)}}e^{-(2k-1)r(e^{\ell_2}y-e^{\ell_1}x)+((2k-1)r)^2(\beta-\alpha)-(e^{\ell_1}x+e^{\ell_2}y-2(2k-1)r(\alpha+\beta)-(2k-1)r)^2/(4(\beta-\alpha))}\\
&={\sf R}^{((2k-1)r),{\rm bb}}_{[\ell_1,\ell_2]}(x,y).
\end{align}
One can obtain formulas for ${\sf R}_{2,k}$, ${\sf R}_{3,k}$ and ${\sf R}_{4,k}$, and putting everything together leads to
\begin{multline}\label{eq:infiniteRefl}
{\sf R}^{(r),{\rm be}/{\rm rbb}}_{[\ell_1,\ell_2]}(x,y)=\sum_{k=1}^\infty\Bigl[{\sf R}^{((2k-1)r),{\rm bb}}_{[\ell_1,\ell_2]}(x,y)+{\sf R}^{((1-2k)r),{\rm bb}}_{[\ell_1,\ell_2]}(x,y)\\
-{\sf R}^{(2kr),{\rm bb}}_{[\ell_1,\ell_2]}(x,-y)-{\sf R}^{(-2kr),{\rm bb}}_{[\ell_1,\ell_2]}(x,-y)\Bigr].
\end{multline}

The orthonormality of the $\varphi_n$'s together with \eqref{eq:etDoKGN} imply the identities $e^{2L{\sf D}}\oeKGN=(e^{L{\sf D}}\oeKGN)^2$ and $e^{-L{\sf D}}\oeKGN e^{L{\sf D}}\oeKGN=e^{L{\sf D}}\oeKGN e^{-L{\sf D}}\oeKGN=\oeKGN$.
Using this and the cyclic property of the Fredholm determinant, \eqref{eq:becont} and \eqref{eq:LambdaL} yield
\begin{equation}\label{eq:detRewr}
\pp\!\left(\cm^{{\rm be}/{\rm rbb}}_N\leq r\right)
=\lim_{L\to \infty}\det\!\left({\sf I}-\oeKGN+e^{L{\sf D}}\oeKGN\Theta^{(r),{{\rm be}/{\rm rbb}}}_{[-L,L]} e^{L{\sf D}}\oeKGN\right).
\end{equation}
We claim now that, in trace norm,
\begin{equation}\label{eq:errorRepl}
  \lim_{L\to\infty}e^{L{\sf D}}\oeKGN\Theta^{(r),{{\rm be}/{\rm rbb}}}_{[-L,L]} e^{L{\sf D}}\oeKGN=\lim_{L\to\infty}e^{L{\sf D}}\oeKGN(e^{-2L{\sf D}}-{\sf R}^{(r),{{\rm be}/{\rm rbb}}}_{[-L,L]})e^{L{\sf D}}\oeKGN,
\end{equation}
which just corresponds to removing the operators $\bar{\sf Q}_{r\ttsm\cosh(L)}$ in \eqref{eq:LambdaL}.
The proof of this is very similar to that of \cite[Lem. 2.3]{nibm-loe}.
There is an additional complication here related with the fact that ${\sf R}^{(r)}_L$ involves an infinite sum (see \eqref{eq:infiniteRefl}), but it is not hard to address this because the summands decay very rapidly in $k$.
Since we will deal with this issue later on in (see the discussion after \eqref{eq:limdelUps}), we will skip the proof of \eqref{eq:errorRepl}.

From \eqref{eq:detRewr} and \eqref{eq:errorRepl} we deduce that
\begin{equation}
\pp\!\left(\cm^{{\rm be}/{\rm rbb}}_N\leq r\right)
=\lim_{L\to \infty}\det\!\left({\sf I}-e^{L{\sf D}}\oeKGN {\sf R}^{(r),{\rm be}/{\rm rbb}}_{[-L,L]} e^{L{\sf D}}\oeKGN\right).
\end{equation}
The key point is then to compute the kernel $e^{L{\sf D}}\oeKGN {\sf R}^{(r),{\rm be}/{\rm rbb}}_{[-L,L]} e^{L{\sf D}}\oeKGN$. But, as we will see next, the result of this computation does not depend on $L$ (analogously to what happens for Dyson Brownian motion in \cite{nibm-loe} and for the Airy process in \cite{cqr}).

Define ${\sf S}^{{\rm be}/{\rm rbb}}_{i,k}:=e^{L{\sf D}}\oeKGN {\sf R}_{i,k}\ts e^{L{\sf D}}\oeKGN$ for $i\in\{1,\ldots,4\}$ (see \eqref{eq:infiniteRefl}), so that
\[e^{L{\sf D}}\oeKGN {\sf R}^{(r),{\rm be}/{\rm rbb}}_{[-L,L]} e^{L{\sf D}}\oeKGN=\sum_{k=1}^\infty\bigl[{\sf S}^{{\rm be}/{\rm rbb}}_{1,k}+{\sf S}^{{\rm be}/{\rm rbb}}_{2,k}-{\sf S}^{{\rm be}/{\rm rbb}}_{3,k}-{\sf S}^{{\rm be}/{\rm rbb}}_{4,k}\bigr].\]
In order to compute these products we will use the following formula:
\begin{equation}
e^{L{\sf D}}\oeKGN {\sf R}^{(r),{\rm bb}}_{[-L,L]} e^{L{\sf D}}\oeKGN=\oeKGN\varrho_r\oeKGN,\quad \forall r\in\rr,\ L\in\nn.
\end{equation}
Note that the operator on the right hand side does not depend on $L$.
This formula was proved in \cite[Lem. 2.4]{nibm-loe} in the BB case (that is, with $\oeKGN$ replaced by $\KGN$), but it is straightforward to check that the result still holds in the present case.
Using this for the first two operators, ${\sf S}^{{\rm be}/{\rm rbb}}_{1,k}$ and ${\sf S}^{{\rm be}/{\rm rbb}}_{2,k}$, leads directly to
\begin{equation}
{\sf S}^{{\rm be}/{\rm rbb}}_{1,k}=\oeKGN\varrho_{(2k-1)r}\oeKGN\quad\text{and}\quad{\sf S}^{{\rm be}/{\rm rbb}}_{2,k}=\oeKGN\varrho_{(1-2k)r}\oeKGN.
\end{equation}
For the last two terms we can write ${\sf S}^{{\rm be}/{\rm rbb}}_{3,k}=e^{L{\sf D}}\oeKGN {\sf R}^{(2kr),{\rm bb}}_{[-L,L]}\varrho_0e^{L{\sf D}}\oeKGN$ and ${\sf S}^{{\rm be}/{\rm rbb}}_{4,k}=e^{L{\sf D}}\oeKGN {\sf R}^{(-2kr),{\rm bb}}_{[-L,L]}\varrho_0e^{L{\sf D}}\oeKGN$, where we recall (see \eqref{eq:refl}) that $\varrho_0f(x)=f(-x)$. Using the parity properties of the Hermite functions, we have $\varrho_0e^{L{\sf D}}\oKGN=-e^{L{\sf D}}\oKGN$ and $\varrho_0e^{L{\sf D}}\eKGN=e^{L{\sf D}}\eKGN$ which implies
\begin{align}
{\sf S}^{{\rm be}}_{3,k}&=-\oKGN\varrho_{2kr}\oKGN,&{\sf S}^{{\rm be}}_{4,k}&=-\oKGN\varrho_{-2kr}\oKGN,\\
{\sf S}^{{\rm rbb}}_{3,k}&=\eKGN\varrho_{2kr}\eKGN,&{\sf S}^{{\rm rbb}}_{4,k}&=\eKGN\varrho_{-2kr}\eKGN.
\end{align}
To finish the proof observe that ${\sf S}^{{\rm be}/{\rm rbb}}_{1,k}={\sf S}^{{\rm be}/{\rm rbb}}_{2,k}$ and ${\sf S}^{{\rm be}/{\rm rbb}}_{3,k}={\sf S}^{{\rm be}/{\rm rbb}}_{4,k}$, which follows from the fact that $\int_\rr dx\,\varphi_n(x)\varphi_m(2r-x)=\int_\rr dx\,\varphi_n(x)\varphi_m(-2r-x)$ for all $r\in\rr$ and all $n,m\in\nn$ which are both either odd or even.

\section{Joint distribution of the max and the argmax}\label{sec:derivation}

\subsection{Proof of Theorems \ref{thm:densBB} and \ref{thm:half-line-argmax}}

We will write a single proof for the two results.
We will keep using the superscripts $\star$ and be/rbb (as in the previous section) when we write formulas which are valid, respectively, for the three models and for BE/RBB.

Throughout this and the next sections we will denote by $\|\cdot\|_1$ and $\|\cdot\|_2$ the trace class and Hilbert-Schmidt norms of operators on $L^2(\rr)$ ($\|\cdot\|_2$ will also be used to denote the $L^2(\rr)$ norm).
We recall that
\begin{equation}\label{eq:inenorms}
\|AB\|_1\leq\|A\|_2\|B\|_2,\quad\|AB\|_2\leq\|A\|_2\|B\|_2\qqand \|A\|_2^2=\int_{-\infty}^{\infty}\!dx\,dy\,A(x,y)^2
\end{equation}
if $A$ has integral kernel $A(x,y)$.
We will also use the bound
\begin{equation}\label{eq:inebound}
|\!\det(I+A)-\det(I+B)|\leq\|A-B\|_1 e^{\|A\|_1+\|B\|_1+1}\leq\|A-B\|_1 e^{\|A-B\|_1+2\|B\|_1+1}
\end{equation}
for any two trace class operators $A$ and $B$; for more details see \cite[Sec. 2]{quastelRem-review} or \cite{simon}.

The argument is based on the continuum statistics formulas in Propositions \ref{prop:dbmcont} and \ref{prop:becont}. It will be more convenient for us to work instead with
\begin{equation}
  \widehat\cm^\star_N=\max_{t\in\rr}\sqrt{2}B^\star_N\bigl(\tfrac{e^{2t}}{1+e^{2t}}\bigr)\qqand\widehat\ct^\star_N=\argmax_{t\in\rr}\sqrt{2}B^\star_N\bigl(\tfrac{e^{2t}}{1+e^{2t}}\bigr).
\end{equation}
To that end we introduce, for $r\geq0$ and $t\in\rr$, the functions
\begin{equation}\label{eq:whpsibb}
\begin{split}
 \widehat\psi^{\rm bb}_{r,t}(n)&=\sqrt{2\cosh(t)}\,e^{-nt}\bigl[\varphi_n'(r\ttsm\cosh(t))+r\ttsm\sinh(t)\varphi_n(r\ttsm\cosh(t))\bigr],\\
 \widehat\psi^{\rm be}_{r,t}(n)&=2\widehat\psi^{\rm bb}_{r,t}(n)+2\sqrt{2\cosh(t)}\,e^{-nt}\sum_{k=1}^{\infty}e^{k(k+1)r^2\sinh(2t)}\\
&\hspace{0.5in}\times\bigl[\varphi_n'\bigl((2k+1)r\ttsm\cosh(t)\bigr)+(2k+1)r\ttsm\sinh(t)\varphi_n\bigl((2k+1)r\ttsm\cosh(t)\bigr)\bigr],\\
 \widehat\psi^{\rm rbb}_{r,t}(n)&=2\widehat\psi^{\rm bb}_{r,t}(n)+2\sqrt{2\cosh(t)}\,e^{-nt}\sum_{k=1}^{\infty}(-1)^ke^{k(k+1)r^2\sinh(2t)}\\
&\hspace{0.5in}\times\bigl[\varphi_n'\bigl((2k+1)r\ttsm\cosh(t)\bigr)+(2k+1)r\ttsm\sinh(t)\varphi_n\bigl((2k+1)r\ttsm\cosh(t)\bigr)\bigr],
\end{split}
\end{equation}
and the rank one kernels
\begin{equation}\label{eq:whPsibb}
\begin{split}
  \widehat\Psi^{\rm bb}_{r,t}(x,y)&=\Bigl(\sum_{n=0}^{N-1}\varphi_{n}(x)\widehat\psi^{\rm bb}_{r,t}(n)\Bigr)\Bigl(\sum_{m=0}^{N-1}\varphi_{m}(y)\widehat\psi^{\rm bb}_{r,-t}(m)\Bigr),\\
	\widehat\Psi^{\rm be}_{r,t}(x,y)&=\Bigl(\sum_{n=0}^{N-1}\varphi_{2n+1}(x)\widehat\psi^{\rm be}_{r,t}(2n+1)\Bigr)\Bigl(\sum_{m=0}^{N-1}\varphi_{2m+1}(y)\widehat\psi^{\rm be}_{r,-t}(2m+1)\Bigr),\\
	\widehat\Psi^{\rm rbb}_{r,t}(x,y)&=\Bigl(\sum_{n=0}^{N-1}\varphi_{2n}(x)\widehat\psi^{\rm rbb}_{r,t}(2n)\Bigr)\Bigl(\sum_{m=0}^{N-1}\varphi_{2m}(y)\widehat\psi^{\rm rbb}_{r,-t}(2m)\Bigr).
\end{split}
\end{equation}

\begin{thm}\label{thm:whargmax}
  Let ${\widehat f}^\star_N(r,t)$ be the joint density of $\widehat\cm^\star_N$ and $\widehat\ct^\star_N$. Then for all $r>0$ and all $t\in\rr$, and with $\star$ standing for either $\rm bb$, $\rm be$ or $\rm rbb$, we have
	\begin{align}
	\widehat f^\star_N(r,t)&=\tr\!\left[({\sf I}-\sKGN\varrho^{\star}_r\sKGN)^{-1}\widehat\Psi^{\star}_{r,t}\right]\det\!\left({\sf I}-\sKGN \varrho^{\star}_r \sKGN\right)\\
	&=\det\!\left({\sf I}-\sKGN \varrho^{\star}_r \sKGN+\widehat\Psi^{\star}_{r,t}\right)-\det\!\left({\sf I}-\sKGN \varrho^{\star}_r \sKGN\right).
	\end{align}
\end{thm}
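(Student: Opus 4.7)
The plan is to derive the joint density by applying the continuum statistics formulas of Propositions~\ref{prop:dbmcont} and \ref{prop:becont} to a piecewise-constant barrier and extracting the density by two differentiations, in the spirit of the derivation of the joint density of $\cm$ and $\ct$ in \cite{mqr}. Letting $X^\star(s)=\sqrt{2}B^\star_N(e^{2s}/(1+e^{2s}))$, $A=\max_{s\leq t}X^\star(s)$ and $B=\max_{s>t}X^\star(s)$, the a.s.\ uniqueness of the argmax gives $\{\widehat\cm^\star_N\leq r,\widehat\ct^\star_N\leq t\}=\{A\leq r,\,B<A\}$, so writing $G(r_1,r_2,t):=\pp(A\leq r_1,B\leq r_2)$ a standard conditioning argument yields
\[\widehat f^\star_N(r,t)=\p_t\p_{r_1}G(r_1,r_2,t)\big|_{r_1=r_2=r}.\]

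To compute $G$, I first approximate the step-function barrier $g(s)=r_1\cosh(s)\uno{s\leq t}+r_2\cosh(s)\uno{s>t}$ by $H^1$ functions and apply Propositions~\ref{prop:dbmcont}/\ref{prop:becont} to obtain $G_L(r_1,r_2,t)=\det({\sf I}-\sKGN+\Theta^{g,\star}_{[-L,L]}e^{2L{\sf D}}\sKGN)$. By the semigroup structure of the boundary value problem \eqref{eq:bound-pde}/\eqref{eq:bound-pde-be}, $\Theta^{g,\star}_{[-L,L]}=\Theta^{(r_2),\star}_{[t,L]}\Theta^{(r_1),\star}_{[-L,t]}$, and since $r_1$ appears only in the right-most factor the Fredholm derivative identity gives $\p_{r_1}G_L=G_L\tr[({\sf I}+\cdots)^{-1}\Theta^{(r_2),\star}_{[t,L]}(\p_{r_1}\Theta^{(r_1),\star}_{[-L,t]})e^{2L{\sf D}}\sKGN]$. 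Specializing to $r_1=r_2=r$ produces a key simplification: the combination $\Theta^{(r),\star}_{[t,L]}\Theta^{(r),\star}_{[-L,t]}=\Theta^{(r),\star}_{[-L,L]}$ becomes $t$-independent, so the resolvent and determinant prefactor are themselves $t$-independent and, by the $L\to\infty$ cancellations used in the proof of Theorem~\ref{thm:be-formula} (the passage from \eqref{eq:detRewr} onwards), tend respectively to $({\sf I}-\sKGN\varrho^\star_r\sKGN)^{-1}$ and $\det({\sf I}-\sKGN\varrho^\star_r\sKGN)$.

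The $t$-derivative then acts only on the remaining factor $e^{L{\sf D}}\sKGN\,\Theta^{(r),\star}_{[t,L]}(\p_{r_1}\Theta^{(r_1),\star}_{[-L,t]}|_{r_1=r})e^{L{\sf D}}\sKGN$ inside the trace, and the remaining task is to show that it tends as $L\to\infty$ to the rank-one operator $\widehat\Psi^\star_{r,t}$ of \eqref{eq:whPsibb}. In the BB case, the explicit formula \eqref{eq:reflOperBB} for ${\sf R}^{(r),{\rm bb}}_{[\ell_1,\ell_2]}$ reduces the computation to a Gaussian integral against the finite Hermite sum $e^{L{\sf D}}\KGN(x,z)=\sum_{n=0}^{N-1}e^{nL}\varphi_n(x)\varphi_n(z)$ at the boundary points $z=r\cosh(\pm t)$; here $\p_{r_1}$ contributes a boundary delta from the cutoff $\bar{\sf P}_{r_1\cosh(t)}$ together with an interior derivative of ${\sf R}^{(r_1),{\rm bb}}$, and its combination with $\p_t$ (which changes the temporal endpoint) collapses to a kernel of product form whose two factors are proportional to $\sum_n\varphi_n(\cdot)\widehat\psi^{\rm bb}_{r,t}(n)$ and $\sum_m\varphi_m(\cdot)\widehat\psi^{\rm bb}_{r,-t}(m)$. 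The exponentials $e^{nL}$ from one side cancel against $e^{-nL}$ from the other (arising from the effective time-intervals of lengths $L\mp t$), leaving exactly the prefactors $\sqrt{2\cosh(\pm t)}\,e^{\mp nt}$ and the bracket $\varphi_n'(r\cosh(\pm t))\pm r\sinh(\pm t)\varphi_n(r\cosh(\pm t))$ that define $\widehat\psi^{\rm bb}_{r,\pm t}$ in \eqref{eq:whpsibb}.

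For BE/RBB the only new feature is that ${\sf R}^{(r),{\rm be}/{\rm rbb}}$ is the infinite sum \eqref{eq:infiniteRefl} of BB-type reflection kernels at the shifted levels $\pm(2k-1)r$ and $\pm2kr$, so the same computation yields one term per $k\geq 1$; the parity identities $\varrho_0 e^{L{\sf D}}\oKGN=-e^{L{\sf D}}\oKGN$ and $\varrho_0 e^{L{\sf D}}\eKGN=e^{L{\sf D}}\eKGN$ established in Section~\ref{sec:bm_halfspace} collapse the $\pm$ pairs into each $k$-term and produce the factor $e^{k(k+1)r^2\sinh(2t)}$ together with the alternating signs $(-1)^k$ that appear in \eqref{eq:whpsibb}. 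The \emph{main obstacle} is the trace-norm bookkeeping required to justify interchanging the limit $L\to\infty$ with the derivatives $\p_t\p_{r_1}$, with the infinite sum over $k$ (in the BE/RBB cases), and with the $H^1$ approximation of the step barrier; these estimates are parallel in spirit to those needed for \eqref{eq:errorRepl} in Section~\ref{sec:bm_halfspace} and rely on the rapid decay of the Gaussian and Hermite-function kernels in $n$, $k$ and $L$.
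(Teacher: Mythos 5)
Your skeleton matches the paper's at every stage except one: continuum statistics formula for a perturbed barrier, Fredholm-derivative-equals-trace, an explicit Gaussian/Hermite computation of a rank-one kernel, and an $L\to\infty$ limit handled by the same cancellations as in the proof of Theorem \ref{thm:be-formula}. Where you genuinely diverge is in how the density is extracted. The paper perturbs the barrier only on a shrinking window $[t,t+\delta]$ (the function $h_{\ep,\delta}$) and takes $\ep\to0$ then $\delta\to0$, so that the middle factor $[\p_\ep\Theta^{(r+\ep),\star}_{[t,t+\delta]}]_{\ep=0}$ of the three-fold composition is manifestly concentrated near the corner $(r\cosh t,r\cosh t)$; Lemma \ref{lem:limdelta} then performs the localization explicitly. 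You instead differentiate the two-level distribution function $G(r_1,r_2,t)$ in $r_1$ and $t$ at $r_1=r_2=r$. The identity $\widehat f^\star_N=\p_t\p_{r_1}G|_{r_1=r_2=r}$ is correct (granting the smoothness of $G$ that the paper's sandwiching with $\underline D_{\ep,\delta}$ and $\overline D_{\ep,\delta}$ is designed to supply), and your observation that the determinant prefactor and the resolvent become $t$-independent after setting $r_1=r_2=r$ is also correct.

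The gap is the step you compress into ``its combination with $\p_t$ collapses to a kernel of product form.'' The operator $\p_{r_1}\Theta^{(r_1),\star}_{[-L,t]}\big|_{r_1=r}$ is \emph{not} localized at time $t$: raising the barrier level over all of $[-L,t]$ produces contributions from the path touching the barrier anywhere in that interval (boundary terms from both cutoffs together with an interior term from $\p_{r_1}{\sf R}^{(r_1),\star}_{[-L,t]}$), and none of these is rank one. The rank-one structure can only emerge from a Hadamard-type cancellation between $\p_t\Theta^{(r),\star}_{[t,L]}\cdot\p_{r_1}\Theta^{(r_1),\star}_{[-L,t]}$ and $\Theta^{(r),\star}_{[t,L]}\cdot\p_t\p_{r_1}\Theta^{(r_1),\star}_{[-L,t]}$, in which the interior (generator) contributions cancel and only the corner contribution at $w=r\cosh(t)$ (plus, for BE/RBB, the one at $w=-r\cosh(t)$, which parity then folds into the same term) survives, giving $\p_w\varTheta_1^{\star}(x,w)\,\p_w\varTheta_2^{\star}(w,y)$. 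That cancellation is exactly what the paper's Lemma \ref{lem:limdelta} proves, and it is the heart of the argument; your proposal asserts it rather than establishes it. If you either carry out this computation or adopt the paper's $[t,t+\delta]$-window perturbation (which makes the localization automatic), the rest of your outline — the $L\to\infty$ Hermite/Gaussian computation, the parity identities for BE/RBB, and the trace-norm bookkeeping — goes through as you describe and reproduces Lemmas \ref{lem:limdelta} and \ref{lem:limLPsi}.
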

To recover Theorems \ref{thm:densBB} and \ref{thm:half-line-argmax} from this result it is enough to use a change of variables
\begin{equation}
f_N^\star(r,t)=\frac{1}{\sqrt{2}t(1-t)}\widehat f_N^\star\Bigl(\sqrt{2}r,\frac12 \log\Bigl(\frac{t}{1-t}\Bigr)\Bigr)\quad\text{for}\ r>0,\,t\in(0,1),
\end{equation}

\begin{proof}[Proof of Theorem \ref{thm:whargmax}]
We will proceed as in \cite{mqr}.
Let $(\widehat\cm^\star_{N,L},\widehat\ct^\star_{N,L})$ denote the maximum and the location of the maximum of $\sqrt{2}B^\star_N\bigl(\tfrac{e^{2t}}{1+e^{2t}}\bigr)$ restricted to $t\in[-L,L]$, that is,
\begin{equation}
\widehat\cm^\star_{N,L}=\max_{t\in[-L,L]}\sqrt{2}B^\star_N\bigl(\tfrac{e^{2t}}{1+e^{2t}}\bigr)\quad\text{and}\quad\widehat\ct^\star_{N,L}=\argmax_{t\in[-L,L]}\sqrt{2}B^\star_N\bigl(\tfrac{e^{2t}}{1+e^{2t}}\bigr),
\end{equation}
and let $\widehat f^\star_{N,L}(r,t)$ be their joint density.
Note that
\begin{equation}\label{eq:limLf}
\widehat f^\star_N(r,t)=\lim_{L\to\infty}\widehat f^\star_{N,L}(r,t).
\end{equation}
By definition
\begin{equation}
\widehat f^\star_{N,L}(r,t)=\lim_{\delta\to 0}\lim_{\ep\to 0}\frac{1}{\delta\ep}\pp\bigl(\cm^\star_{N,L}\in[r,r+\ep],\ct^\star_{N,L}\in[t,t+\delta]\bigr),
\end{equation}
provided that the limit exists.
If we denote by $\underline{D}_{\ep,\delta}^\star$ and $\overline{D}_{\ep,\delta}^\star$ the sets
\begin{multline}
\underline{D}_{\ep,\delta}^\star=\Bigl\{\sqrt{2}B^\star_N\bigl(\tfrac{e^{2s}}{1+e^{2s}}\bigr)\leq r,\,s\in[t,t+\delta]^\mathsf{c};\;\sqrt{2}B^\star_N\bigl(\tfrac{e^{2s}}{1+e^{2s}}\bigr)\leq r+\ep,\,s\in[t,t+\delta];\\
\sqrt{2}B^\star_N\bigl(\tfrac{e^{2s}}{1+e^{2s}}\bigr)\in[r,r+\ep]\ \text{for some}\ s\in[t,t+\delta]\Bigr\},
\end{multline}
and
\begin{equation}
\overline{D}_{\ep,\delta}^\star=\Bigl\{\sqrt{2}B^\star_N\bigl(\tfrac{e^{2s}}{1+e^{2s}}\bigr)\leq r+\ep,\,s\in[-L,L];\;\sqrt{2}B^\star_N\bigl(\tfrac{e^{2s}}{1+e^{2s}}\bigr)\in[r,r+\ep]\ \text{for some}\ s\in[t,t+\delta]\Bigr\},
\end{equation}
then
\begin{equation}
\underline{D}_{\ep,\delta}^\star\subseteq\bigl\{\cm^\star_{N,L}\in[r,r+\ep],\ct^\star_{N,L}\in[t,t+\delta]\bigr\}\subseteq\overline{D}_{\ep,\delta}^\star.
\end{equation}
Letting $\underline{f}^\star_{N,L}(r,t)=\lim_{\delta\to 0}\lim_{\ep\to 0}\frac{1}{\delta\ep}\pp(\underline{D}_{\ep,\delta}^\star)$ and defining $\overline{f}^\star_{N,L}(r,t)$ analogously 
we deduce that $\underline{f}^\star_{N,L}(r,t)\leq \widehat f^\star_{N,L}(r,t)\leq\overline{f}^\star_{N,L}(r,t)$.
We will only compute $\underline{f}^\star_{N,L}(r,t)$.
As in \cite{mqr}, it will be clear from the argument that for $\overline{f}^\star_{N,L}(r,t)$ we get the same limit, and thus
\begin{equation}
\widehat f^\star_{N,L}(r,t)=\lim_{\delta\to 0}\lim_{\ep\to 0}\frac{1}{\delta\ep}\pp(\underline{D}_{\ep,\delta}^\star).
\end{equation}

We rewrite the last equation as
\begin{multline}
\widehat f^\star_{N,L}(r,t)=\lim_{\delta\to 0}\lim_{\ep\to 0}\frac{1}{\delta\ep}\Bigl[\pp\bigl(\sqrt{2}B_N^\star\bigl(\tfrac{e^{2s}}{1+e^{2s}}\bigr)\leq h_{\ep,\delta}(s)\sech(s),\,s\in[-L,L]\bigr)\\
-\pp\bigl(\sqrt{2}B_N^\star\bigl(\tfrac{e^{2s}}{1+e^{2s}}\bigr)\leq h_{0,\delta}(s)\sech(s),\,s\in[-L,L]\bigr)\Bigr],
\end{multline}
where
\begin{equation}
h_{\ep,\delta}(s)=\cosh(s)(r+\ep\uno{s\in[t,t+\delta]}).
\end{equation}
These two probabilities have explicit Fredholm determinant formulas by Propositions \ref{prop:dbmcont} and \ref{prop:becont}.
We get, using the cyclic property of the determinants,
\begin{multline}
\widehat f^\star_{N,L}(r,t)=\lim_{\delta\to 0}\lim_{\ep\to 0}\frac{1}{\ep\delta}\Bigl[\det\!\left({\sf I}-\sKGN+e^{L{\sf D}}\sKGN\Theta_{[-L,L]}^{h_{\ep,\delta},\star}e^{L{\sf D}}\sKGN\right)\\
-\det\!\left({\sf I}-\sKGN+e^{L{\sf D}}\sKGN\Theta_{[-L,L]}^{h_{0,\delta},\star}e^{L{\sf D}}\sKGN\right)\Bigr],
\end{multline}
where $\Theta_{[-L,L]}^{h_{\ep,\delta},\star}$ means $\Theta_{[-L,L]}^{h_{\ep,\delta},{\rm bb}}$ in the case of BB and $\Theta_{[-L,L]}^{h_{\ep,\delta},{\rm be}/{\rm rbb}}$ in the case of BE/RBB.
The limit in $\ep$ becomes a derivative
\begin{equation}
\widehat f^\star_{N,L}(r,t)=\lim_{\delta\to 0}\frac{1}{\delta}\p_\beta\det\!\left({\sf I}-\sKGN+e^{L{\sf D}}\sKGN\Theta_{[-L,L]}^{h_{\beta,\delta},\star}e^{L{\sf D}}\sKGN\right)\Big|_{\beta=0},
\end{equation}
which in turn gives a trace (see e.g. \cite[Lem. A.2]{mqr}),
\begin{multline}
\widehat f^\star_{N,L}(r,t)=\det\!\left({\sf I}-\sKGN+e^{L{\sf D}}\sKGN\Theta_{[-L,L]}^{h_{0,\delta},\star}e^{L{\sf D}}\sKGN\right)\\
\times\lim_{\delta\to 0}\frac{1}{\delta}\tr\!\biggl[({\sf I}-\sKGN+e^{L{\sf D}}\sKGN\Theta_{[-L,L]}^{h_{0,\delta},\star}e^{L{\sf D}}\sKGN)^{-1}\\
\times e^{L{\sf D}}\sKGN\left[\p_\beta\Theta_{[-L,L]}^{h_{\beta,\delta},\star}\right]_{\beta=0}e^{L{\sf D}}\sKGN\biggr].
\end{multline}
Note that $h_{0,\delta}(s)=r\cosh(s)$, so in particular the determinant and the first factor inside the trace do not depend on $\delta$.
We know moreover, from \cite[Sec. 2.2]{nibm-loe} in the BB case and from Section \ref{sec:bm_halfspace} above in the BE/RBB case, that
\begin{equation}
\lim_{L\to\infty}\left({\sf I}-\sKGN+e^{L{\sf D}}\sKGN\Theta_{[-L,L]}^{h_{0,\delta},\star}e^{L{\sf D}}\sKGN\right)={\sf I}-\sKGN \varrho^\star_r \sKGN
\end{equation}
in trace norm (which implies that the same holds for the inverse of these operators).
Since the trace is linear and continuous under the trace norm topology, we deduce that
\begin{multline}\label{eq:providedTrNormLim}
\lim_{L\to\infty}\widehat f^\star_{N,L}(r,t)=\det\!\left({\sf I}-\sKGN \varrho^\star_r \sKGN\right)\\
\times\tr\!\left[\left({\sf I}-\sKGN \varrho^\star_r \sKGN\right)^{-1}\lim_{L\to\infty}\lim_{\delta\to 0}\frac{1}{\delta}e^{L{\sf D}}\sKGN\left[\p_\beta\Theta_{[-L,L]}^{h_{\beta,\delta},\star}\right]_{\beta=0}e^{L{\sf D}}\sKGN\right],
\end{multline}
provided that the limit inside the trace exists in operator norm (here we are using the fact that $\|AB\|_{1}\leq\|A\|_1\|B\|_{\rm op}$, where $\|\cdot\|_{\rm op}$ is the operator norm).

The next step is to compute the limit $\lim_{L\to\infty}e^{L{\sf D}}\sKGN\left[\lim_{\delta\to 0}\frac{1}{\delta}\p_\beta\Theta_{[-L,L]}^{h_{\beta,\delta},\star}\right]_{\beta=0}e^{L{\sf D}}\sKGN$.
To this end we need some additional notation.
Let
\begin{equation}\label{eq:wtthetarRpre}
  \wt\Theta^{(r),\star}_{[\ell_1,\ell_2]}=e^{-(\ell_2-\ell_1){\sf D}}-{\sf R}^{(r),\star}_{[\ell_1,\ell_2]}.
\end{equation}
Comparing with \eqref{eq:thetarRpre} and \eqref{eq:LambdaL}, $\wt\Theta^{(r),\star}_{[\ell_1,\ell_2]}$ is simply $\Theta^{(r),\star}_{[\ell_1,\ell_2]}$ with the projections on the two sides removed.
Next we introduce the kernels
\begin{equation}\label{eq:Thetakernels}
\begin{aligned}
\varTheta_1^{\rm bb}(x,z_1)&=e^{-z_1^2/2}\wt{\Theta}^{(r),{\rm bb}}_{[-L,t]}(x,z_1)\uno{x\leq r\ttsm\cosh(L)},\\
\varTheta_2^{\rm bb}(z_2,y)&=e^{z_2^2/2}\wt{\Theta}^{(r),{\rm bb}}_{[t,L]}(z_2,y)\uno{y\leq r\ttsm\cosh(L)},\\
\varTheta_1^{{\rm be}/{\rm rbb}}(x,z_1)&=e^{-z_1^2/2}\wt{\Theta}^{(r),{\rm be}/{\rm rbb}}_{[-L,t]}(x,z_1)\uno{|x|\leq r\ttsm\cosh(L)},\\
\varTheta_2^{{\rm be}/{\rm rbb}}(z_2,y)&=e^{z_2^2/2}\wt{\Theta}^{(r),{\rm be}/{\rm rbb}}_{[t,L]}(z_2,y)\uno{|y|\leq r\ttsm\cosh(L)}.
\end{aligned}
\end{equation}
and define further, for $a\in\rr$,
\begin{equation}\label{eq:psirankone}
\begin{aligned}
\widehat{\varPhi}^{1,\star}_{L,a}(x)&=\sqrt{\tfrac{\cosh(t)}{2}}\,\bigl.\p_w\bigl(e^{L{\sf D}}\sKGN\varTheta_1^{\star}(x,w)\bigr)\bigr|_{w=a},\\
\widehat{\varPhi}^{2,\star}_{L,a}(y)&=\sqrt{\tfrac{\cosh(t)}{2}}\,\bigl.\p_w\bigl(\varTheta_2^{\star}e^{L{\sf D}}\sKGN(w,y)\bigr)\bigr|_{w=a}.
\end{aligned}
\end{equation}

\begin{lem}\label{lem:limdelta}
The following limits hold in the operator norm topology:
\begin{equation}\label{eq:nibmlimdelta}
\lim_{\delta\to 0}\frac{1}{\delta}\left[\p_\beta\Theta_{[-L,L]}^{h_{\beta,\delta},{\rm bb}}\right]_{\beta=0}(x,y)=\frac{\cosh(t)}{2}\left(\p_w\varTheta_1^{\rm bb}(x,w)\p_w\varTheta_2^{\rm bb}(w,y)\right)\bigg|_{w=r\ttsm\cosh(t)}
\end{equation}
and
\begin{multline}\label{eq:belimdelta}
\lim_{\delta\to 0}\frac{1}{\delta}\left[\p_\beta\Theta_{[-L,L]}^{h_{\beta,\delta},{{\rm be}/{\rm rbb}}}\right]_{\beta=0}(x,y)
=\frac{\cosh(t)}{2}\left(\p_w\varTheta_1^{{\rm be}/{\rm rbb}}(x,w)\p_w\varTheta_2^{{\rm be}/{\rm rbb}}(w,y)\right)\bigg|_{w=r\ttsm\cosh(t)}\\
+\frac{\cosh(t)}{2}\left(\p_w\varTheta_1^{{\rm be}/{\rm rbb}}(x,w)\p_w\varTheta_2^{{\rm be}/{\rm rbb}}(w,y)\right)\bigg|_{w=-r\ttsm\cosh(t)}.
\end{multline}
\end{lem}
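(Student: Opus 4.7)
My plan is to interpret the lemma as a Hadamard-type first-variation formula for the BVPs \eqref{eq:bound-pde} and \eqref{eq:bound-pde-be}. By the semigroup property---since $h_{\lambda,\delta}$ equals $r\cosh(\cdot)$ outside $[t,t+\delta]$ and $(r+\lambda)\cosh(\cdot)$ inside---the perturbed operator factors as
\begin{equation*}
\Theta^{h_{\lambda,\delta},\star}_{[-L,L]} = \Theta^{(r),\star}_{[-L,t]}\,\Theta^{(r+\lambda),\star}_{[t,t+\delta]}\,\Theta^{(r),\star}_{[t+\delta,L]},
\end{equation*}
so that $\partial_\lambda|_{\lambda=0}$ touches only the middle factor, and the task reduces to computing $\lim_{\delta\to0}\frac{1}{\delta}\,\partial_\lambda|_{\lambda=0}\Theta^{(r+\lambda),\star}_{[t,t+\delta]}$ sandwiched between the outer propagators.

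To identify which pieces of this derivative survive, I would apply Leibniz's rule to the decomposition $\Theta^{(r+\lambda),\star}_{[t,t+\delta]} = \bar{\sf P}\,\wt\Theta^{(r+\lambda),\star}_{[t,t+\delta]}\,\bar{\sf P}$ from \eqref{eq:thetarRpre}/\eqref{eq:LambdaL} (with $\bar{\sf Q}$ in the BE/RBB case). This produces three terms: two distributional ones from the $\partial_\lambda\bar{\sf P}$ derivatives, and one smooth one from $\partial_\lambda\wt\Theta^{(r+\lambda),\star}_{[t,t+\delta]}|_{\lambda=0} = -\partial_r{\sf R}^{(r),\star}_{[t,t+\delta]}$. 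The crucial algebraic fact is the reflection-principle identity
\begin{equation*}
\wt\Theta^{(r),{\rm bb}}_{[\ell_1,\ell_2]}(r\cosh(\ell_1), y) = \wt\Theta^{(r),{\rm bb}}_{[\ell_1,\ell_2]}(x, r\cosh(\ell_2)) = 0,
\end{equation*}
which one verifies by direct computation from \eqref{eq:reflOperBB}: the exponents in the free propagator \eqref{eq:etDkernel} and in the reflection kernel match exactly on the barrier. This makes the two distributional contributions vanish identically, leaving only the smooth one.

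To evaluate the surviving term's limit I would interpret it via the Hadamard first variation. The first variation $v = \partial_\lambda u|_{\lambda=0}$ of the BVP solution $u$ satisfies the same PDE with zero initial data and inhomogeneous Dirichlet data $v(s, r\cosh(s)) = -\cosh(s)\,\partial_x u(s, r\cosh(s))$ for $s \in [t, t+\delta]$ (obtained by expanding $u(\lambda, s, (r+\lambda)\cosh(s)) = 0$ to first order in $\lambda$). Duhamel's formula for the BVP, with diffusion coefficient $\tfrac12$ coming from ${\sf D} = -\tfrac12(\Delta - x^2 + 1)$, then expresses the corresponding perturbation as an integral over $[t,t+\delta]$ of the Green's function's normal derivative against this boundary data; dividing by $\delta$ and letting $\delta \to 0$ extracts the integrand at $s = t$. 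The resulting kernel is precisely the advertised product of normal derivatives $\partial_w$ at $w = r\cosh(t)$: the coefficient $\cosh(t)/2$ combines the barrier's normal velocity $\cosh(t)$ with the diffusion coefficient $\tfrac12$, while the Gaussian prefactors $e^{\pm z^2/2}$ in \eqref{eq:Thetakernels} come directly from the explicit form \eqref{eq:etDkernel} of the free propagator kernel.

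For BE/RBB the barrier $|x|\leq r\cosh(s)$ is two-sided, so the same Hadamard variation applies independently at each side, producing the two contributions at $w = \pm r\cosh(t)$ whose sum is \eqref{eq:belimdelta}; the infinite reflection series \eqref{eq:infiniteRefl} is handled term by term, using the same reflection-principle identity on each summand, with the rapid Gaussian decay in $k$ providing summability. The main obstacle is the careful kernel-level bookkeeping that matches the Hadamard variational answer with the precise form of $\varTheta_{1,2}^\star$ in \eqref{eq:Thetakernels}, together with the verification that the reflection-principle vanishing identity extends to every term of the infinite series in the BE/RBB case.
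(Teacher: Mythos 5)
Your skeleton coincides with the paper's in its two load-bearing ingredients: (i) the semigroup factorization, so that $\p_\lambda$ hits only the middle factor $\Theta^{(r+\lambda),\star}_{[t,t+\delta]}$, and (ii) the vanishing of $\wt\Theta^{(r),\star}_{[\ell_1,\ell_2]}$ on the barrier (your reflection-principle identity is correct: in \eqref{eq:defLambda} the bridge then starts or ends exactly on the linear barrier), which both kills the projection-derivative terms and is what converts kernel values into the derivatives $\p_w\varTheta_i^{\star}$; this is precisely the fact $\varTheta_i^{\star}(\cdot,\pm r\ttsm\cosh(t))=0$ used in the paper. Where you genuinely diverge is in evaluating the $\delta\to0$ limit of the surviving term. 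The paper does this by brute force: it writes the kernel $\bigl[\p_\ep\wt\Theta^{(r+\ep),{\rm bb}}_{[t,t+\delta]}\bigr]_{\ep=0}$ explicitly as a Gaussian (the kernel $\Upsilon^{\rm bb}$ of \eqref{eq:Upsilonbb}), rescales $z_1,z_2$ around the barrier points by $\sqrt{\beta-\alpha}\sim\sqrt\delta$, computes the pointwise limit of the rescaled $\Upsilon^{\rm bb}$, and evaluates the resulting explicit double Gaussian integral (the $2\sqrt\pi$ that, against $1/(4\sqrt\pi)$, produces the coefficient $\cosh(t)/2$). You instead invoke a Hadamard first-variation/Duhamel representation. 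That route is more conceptual and correctly predicts the structure and the constant, but as written it is a heuristic: the lemma asserts convergence in operator norm, and a pointwise variational identity does not by itself supply the uniform (Hilbert--Schmidt-type) bounds needed to pass to that topology, nor the justification for interchanging $\p_\lambda$ with the BVP for the conjugated generator ${\sf D}$. The paper's explicit Gaussians are what make those bounds transparent, so closing your argument would in practice push you back toward the same computation.

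One concrete imprecision in the BE/RBB part: you say the vanishing identity "extends to every term of the infinite series," but the individual summands ${\sf R}_{i,k}$ of \eqref{eq:infiniteRefl} do \emph{not} vanish on the barrier; only the full two-sided Dirichlet kernel does. The correct mechanism (and the one the paper uses) is localization: after recentering the $k$-th summand at $kr\ttsm\cosh(t)$ and rescaling by $\sqrt{\beta-\alpha}$, the integration regions for $|k|\ge2$ escape to infinity as $\delta\to0$ and those terms die by Gaussian decay, while $k=\pm1$ yield half-line integrals producing the two contributions at $w=\pm r\ttsm\cosh(t)$. Your "Gaussian decay in $k$" remark points in the right direction, but the summability of the series is a separate issue from why only $k=\pm1$ survive the $\delta\to0$ limit, and the latter is the step that actually needs the estimate.
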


It follows based on the lemma that
\begin{equation}\label{eq:basedonlemma}
\lim_{\delta\to 0}\frac{1}{\delta}e^{L{\sf D}}\sKGN\left[\p_\beta\Theta_{[-L,L]}^{h_{\beta,\delta},\star}\right]_{\beta=0}e^{L{\sf D}}\sKGN=\widehat{\Psi}^\star_L,
\end{equation}
where $\widehat{\Psi}^\star_L$ has kernel
\begin{equation}\label{eq:hatPsis}
\begin{aligned}
\widehat{\Psi}^{\rm bb}_L(x,y)&=\widehat{\varPhi}^{1,{\rm bb}}_{L,r\ttsm\cosh(t)}(x)\widehat{\varPhi}^{2,{\rm bb}}_{L,r\ttsm\cosh(t)}(y),\\
\widehat{\Psi}^{{\rm be}/{\rm rbb}}_L(x,y)&=\widehat{\varPhi}^{1,{{\rm be}/{\rm rbb}}}_{L,r\ttsm\cosh(t)}(x)\widehat{\varPhi}^{2,{{\rm be}/{\rm rbb}}}_{L,r\ttsm\cosh(t)}(y)+\widehat{\varPhi}^{1,{{\rm be}/{\rm rbb}}}_{L,-r\ttsm\cosh(t)}(x)\widehat{\varPhi}^{2,{{\rm be}/{\rm rbb}}}_{L,-r\ttsm\cosh(t)}(y).
\end{aligned}
\end{equation}
We thus need to compute the $L\to\infty$ limit of $\widehat{\Psi}^{\star}_L$.
This is the content of the next result.

\begin{lem}\label{lem:limLPsi}
Let $\widehat\Psi^{\star}_{r,t}$ be the kernel given by \eqref{eq:whPsibb}. Then the following limit holds in the Hilbert-Schmidt norm:
\begin{equation}\label{eq:LlimitPsi}
\widehat{\Psi}^{\star}_L(x,y)\xrightarrow[L\to\infty]{}\widehat\Psi^{\star}_{r,t}(x,y).
\end{equation}
\end{lem}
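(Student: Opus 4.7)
Both $\widehat\Psi^\star_L$ and the candidate limit $\widehat\Psi^\star_{r,t}$ are sums of at most two rank-one kernels $f\otimes g$, and for such kernels $\|f\otimes g\|_2=\|f\|_{L^2}\|g\|_{L^2}$. The standard triangle inequality $\|f_Lg_L-fg\|_2\leq\|f_L\|_{L^2}\|g_L-g\|_{L^2}+\|f_L-f\|_{L^2}\|g\|_{L^2}$ therefore reduces the claimed Hilbert--Schmidt convergence to $L^2(\rr)$-convergence (with uniform boundedness) of the scalar functions $\widehat\varPhi^{1,\star}_{L,\pm r\ttsm\cosh(t)}$ and $\widehat\varPhi^{2,\star}_{L,\pm r\ttsm\cosh(t)}$. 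By symmetry I need only treat $\widehat\varPhi^{1,\star}_{L,a}$; the analysis of $\widehat\varPhi^{2,\star}_{L,a}$ is identical after replacing $t$ by $-t$.

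The starting point is to compute $(e^{L{\sf D}}\sKGN\varTheta_1^\star)(x,w)$ by inserting the decomposition $\wt\Theta^{(r),\star}_{[-L,t]}=e^{-(t+L){\sf D}}-{\sf R}^{(r),\star}_{[-L,t]}$ from \eqref{eq:wtthetarRpre}. The ``free'' part collapses by orthonormality of the Hermite functions to the $L$-independent expression $e^{-t{\sf D}}\sKGN(x,w)=\sum_{n\in I_\star}e^{-tn}\varphi_n(x)\varphi_n(w)$, where $I_\star$ is $\{0,\dots,N-1\}$ for BB, $\{1,3,\dots,2N-1\}$ for BE, and $\{0,2,\dots,2N-2\}$ for RBB. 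The reflection part is more subtle because the factor $e^{Ln}$ sitting in $e^{L{\sf D}}\sKGN$ grows exponentially in $L$ and must be exactly cancelled by the Gaussian structure of ${\sf R}^{(r),{\rm bb}}_{[-L,t]}$. Using the explicit formula \eqref{eq:reflOperBB}, the $y$-dependence of ${\sf R}^{(r),{\rm bb}}_{[-L,t]}(y,w)$ factors as a $w$-dependent prefactor times $e^{-y^2/2}\exp[e^{-L}A(w)y-e^{-2L}B(w)y^2]$ for explicit $A,B$; pairing with $\varphi_n(y)$ and applying the standard Hermite generating-function identity $\int e^{-y^2-\mu y^2+\lambda y}H_n(y)\,dy\sim\lambda^n$ produces a factor of order $e^{-Ln}$ that cancels the $e^{Ln}$ exactly. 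This is the natural generalization, to the asymmetric interval $[-L,t]$, of the identity $e^{L{\sf D}}\sKGN\,{\sf R}^{(r),{\rm bb}}_{[-L,L]}\,e^{L{\sf D}}\sKGN=\sKGN\varrho_r\sKGN$ of \cite[Lem.~2.4]{nibm-loe}. After the cancellation, the remaining $L$-dependence tends to $0$ at a Gaussian rate as $L\to\infty$; multiplying by $e^{-w^2/2}$, differentiating in $w$, and evaluating at $w=a=r\ttsm\cosh(t)$ yields coefficients in front of $\varphi_n(x)$ that match $\widehat\psi^{\rm bb}_{r,t}(n)$. In particular, the identity $\sinh(t)+\cosh(t)=e^t$ is what combines the term $-a\varphi_n(a)=-r\ttsm\cosh(t)\varphi_n(a)$ coming from $\partial_w e^{-w^2/2}$ with a reflection contribution proportional to $re^t\varphi_n(a)$ (the $e^t=e^{\ell_2}$ coming from the prefactor $e^{\ell_2}$ in \eqref{eq:reflOperBB}) to give exactly $+r\ttsm\sinh(t)\varphi_n(r\ttsm\cosh(t))$.

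For BE/RBB I would substitute the series expansion \eqref{eq:infiniteRefl} of ${\sf R}^{(r),\star}_{[-L,t]}$ and apply the BB analysis term by term to each ${\sf R}^{((2k\pm1)r),{\rm bb}}_{[-L,t]}$ summand and each $\varrho_0$-twisted ${\sf R}^{(2kr),{\rm bb}}_{[-L,t]}$ summand. The parity of $\varphi_{2n+1}$ (resp.\ $\varphi_{2n}$) implies that when the two rank-one contributions coming from $a=r\ttsm\cosh(t)$ and $a=-r\ttsm\cosh(t)$ in \eqref{eq:hatPsis} are summed, the $\varrho_0$-twisted terms cancel (resp.\ double up) so that the total folds into a single rank-one kernel equal to $\widehat\Psi^\star_{r,t}$; this also explains the overall factor of $2$ sitting in front of $\widehat\psi^{\rm bb}_{r,t}$ in \eqref{eq:whpsibb}. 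The Gaussian weight $e^{k(k+1)r^2\sinh(2t)}$ appearing in \eqref{eq:whpsibb} comes from the prefactor $e^{r^2(\beta-\alpha)}$ after $\alpha=\tfrac14e^{-2L}\to 0$ and $\beta=\tfrac14 e^{2t}$, and the replacement $r\mapsto(2k+1)r$ inside $\varphi_n$ is immediate from the analysis applied to ${\sf R}^{((2k+1)r),{\rm bb}}_{[-L,t]}$. Exchanging the $L\to\infty$ limit with the infinite sum over $k$ is legitimate because each term is bounded by $e^{-ck^2}$ uniformly in $L$, inherited from the $e^{-k^2 r^2\beta}$-type factors in \eqref{eq:Reflprob} combined with uniform pointwise bounds on $\varphi_n$ and $\varphi_n'$.

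Since the range of $\sKGN$ is the finite-dimensional span of $\{\varphi_n\}_{n\in I_\star}$, pointwise-in-$n$ convergence of the coefficients in front of each $\varphi_n$ automatically upgrades to $L^2(\rr)$-convergence of $\widehat\varPhi^{i,\star}_{L,a}$, and Parseval gives the required uniform $L^2$ bound. The hardest part will be the algebraic identity underlying the exact $e^{Ln}$ cancellation in the reflection step — once that computation is in hand, the BE/RBB case follows by the same argument plus a routine dominated-convergence estimate for the $k$-sum.
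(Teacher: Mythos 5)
Your proposal is correct and follows essentially the same route as the paper: reduce the Hilbert--Schmidt convergence of the rank-one kernels to $L^2(\rr)$ convergence of the factors $\widehat{\varPhi}^{i,\star}_{L,a}$, compute $e^{L{\sf D}}\sKGN$ applied to the $\Theta$-kernel explicitly so that the $e^{Ln}$ growth is exactly cancelled by the Gaussian structure (the paper organizes this slightly differently, differentiating the combined Gaussian kernel in $w$ first and then evaluating a single Gaussian integral via the contour representation \eqref{eq:integhermite} with the substitution $u\mapsto ue^{L+t}$, rather than splitting into free and reflection parts), and treat BE/RBB by parity of the Hermite functions plus a dominated-convergence argument in $k$. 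The only detail you gloss over is the cutoff $\uno{x\leq r\ttsm\cosh(L)}$ built into $\varTheta_1^\star$ in \eqref{eq:Thetakernels}: your claim that the free part ``collapses by orthonormality'' to $e^{-t{\sf D}}\sKGN$ requires first removing this projection, which the paper justifies by the super-exponential estimate $\|e^{L{\sf D}}\KGN{\sf P}_{r\ttsm\cosh(L)}\theta\|_{L^2(\rr)}\leq c_1\tts e^{NL-c_2e^{2L}}\to0$.
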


In view of \eqref{eq:limLf}, \eqref{eq:providedTrNormLim} and \eqref{eq:basedonlemma}, this lemma completes the proof of Theorem \ref{thm:whargmax}.
\end{proof}

We provide now the proof of Lemma \ref{lem:limLPsi}, the proof of Lemma \ref{lem:limdelta} comes afterwards.

\begin{proof}[Proof of Lemma \ref{lem:limLPsi}]
Let us focus for a moment now on the BB case.
We begin by using the formula for $\wt{\Theta}^{(r),{\rm bb}}_{[-L,t]}$ (see \eqref{eq:wtthetarRpre}, \eqref{eq:reflOperBB} and \eqref{eq:etDkernel}) to compute
\begin{align}
\theta(x)&:=\bigl.\p_w\bigl(e^{-w^2/2}\wt{\Theta}^{(r),{\rm bb}}_{[-L,t]}(x,w)\bigr)\bigr|_{w=r\ttsm\cosh(t)}\\
&=\tfrac{1}{\sqrt{\pi(e^{2t}-e^{-2L})}}\bigl.\p_w\bigl(e^{-x^2/2+t-(e^{-L}x-e^{t}w)^2/(e^{2t}-e^{-2L})}\\
&\hspace{0.2in}-e^{-x^2/2+t-r(e^{t}w-e^{-L}x)+r^2(e^{2t}-e^{-2L})/4-(e^{-L}x+e^{t}w-r(e^{2t}+e^{-2L})/2-r)^2/(e^{2t}-e^{-2L})}\bigr)\bigr|_{w=r\ttsm\cosh(t)}\\
&=\tfrac{4(x-r\ttsm\cosh(L))}{\sqrt{\pi}(e^{2t}-e^{-2L})^{3/2}}\,e^{-\frac{2x^2(e^{2t}+e^{-2L})-4rxe^{-L}(1+e^{2t})+r^2(e^{2t}+1)^2}{4(e^{2t}-e^{-2L})}-L+2t}.
\end{align}
From \eqref{eq:psirankone} we then have
\begin{equation}
\widehat{\varPhi}^{1,{\rm bb}}_{L,r\ttsm\cosh(t)}(x)=\sqrt{\tfrac{\cosh(t)}{2}}\,e^{L{\sf D}}\KGN\bar{\sf P}_{r\ttsm\cosh(L)}\theta(x)=\sqrt{\tfrac{\cosh(t)}{2}}\left(e^{L{\sf D}}\KGN\theta(x)-e^{L{\sf D}}\KGN{\sf P}_{r\ttsm\cosh(L)}\theta(x)\right).
\end{equation}
$\theta(x)$ is essentially a Gaussian, and thus we have the same estimate as in \cite[App. B]{nibm-loe}: for some constants $c_1,c_2>0$,
\begin{equation}
\|e^{L{\sf D}}\KGN{\sf P}_{r\ttsm\cosh(L)}\theta\|_{L^2(\rr)}\leq c_1\tts e^{NL-c_2e^{2L}}\xrightarrow[L\to\infty]{}0.
\end{equation}
This implies that, in computing the limit of $e^{L{\sf D}}\KGN\bar{\sf P}_{r\ttsm\cosh(L)}\theta$, we may erase the projection in the middle and work instead with $e^{L{\sf D}}\KGN\theta$.
As we will see next, this last kernel does not depend on $L$.
We start by writing it
\begin{equation}
e^{L{\sf D}}\KGN\theta(x)=\sum_{n=0}^{N-1}e^{Ln}\varphi_n(x)\int_{-\infty}^\infty dz\,\varphi_n(z)\theta(z)
\end{equation}
and then use the contour representation of the Hermite polynomials
\begin{equation}\label{eq:integhermite}
\varphi_n(z)=(2^nn!\sqrt{\pi})^{-1/2}e^{-z^2/2}\frac{n!}{2\pi\I}\oint du \frac{e^{2uz-u^2}}{u^{n+1}},
\end{equation}
(where the contour of integration encircles the origin) to compute the $z$ integral, which is just a Gaussian integral:
\begin{align}
\int_{-\infty}^\infty &dz\,\varphi_n(z)\theta(z)\\
&=(2^nn!\sqrt{\pi})^{-1/2}\frac{n!}{2\pi\I}\oint du \frac{1}{u^{n+1}\sqrt{\pi}(e^{2t}-e^{-2L})^{3/2}}\\
&\hspace{0.3in}\times\int_{-\infty}^\infty dz\,4(z-r\ttsm\cosh(L))e^{-z^2/2+2uz-u^2-\frac{2z^2(e^{2t}+e^{-2L})-4rze^{-L}(1+e^{2t})+r^2(e^{2t}+1)^2}{4(e^{2t}-e^{-2L})}-L+2t}\\
&=(2^nn!\sqrt{\pi})^{-1/2}\frac{n!}{2\pi\I}\oint du \frac{e^{-u^2e^{-2L-2t}+2ue^{-L-t}r\ttsm\cosh(t)-r^2\cosh(t)^2-L-t}}{u^{n+1}}(4u-2re^L).
\end{align}
Now we perform the change of variable $u\mapsto ue^{L+t}$ to deduce from \eqref{eq:integhermite} that
\begin{align}
\int_{-\infty}^\infty dz\,\varphi_n(z)\theta(z)&=(2^nn!\sqrt{\pi})^{-1/2}\frac{n!}{2\pi\I}\oint du \frac{e^{-u^2+2ur\ttsm\cosh(t)-r^2\cosh(t)^2}}{u^{n+1}e^{(n+1)(L+t)}}(4ue^{L+t}-2re^L)\\
&=2\tts e^{-\tfrac{r^2\cosh(t)^2}{2}-n(L+t)}\bigl[\varphi_n'(r\ttsm\cosh(t))+r\ttsm\sinh(t)\varphi_n(r\ttsm\cosh(t))\bigr].
\end{align}
Therefore
\begin{equation}
e^{L{\sf D}}\KGN\theta(x)=\sum_{n=0}^{N-1}2\tts e^{-\tfrac{r^2\cosh(t)^2}{2}-nt}\varphi_n(x)\bigl[\varphi_n'(r\ttsm\cosh(t))+r\ttsm\sinh(t)\varphi_n(r\ttsm\cosh(t))\bigr].
\end{equation}
We have proved that
\begin{equation}
\widehat{\varPhi}^{1,{\rm bb}}_{L,r\ttsm\cosh(t)}(x)\xrightarrow[L\to\infty]{}e^{-\tfrac{r^2\cosh(t)^2}{2}}\sum_{n=0}^{N-1}\varphi_n(x)\widehat\psi^{\rm bb}_{r,t}(n)
\end{equation}
in $L^2(\rr)$, where
\begin{equation}
\widehat\psi^{\rm bb}_{r,t}(n)=\sqrt{2\cosh(t)}\,e^{-nt}\bigl[\varphi_n'(r\ttsm\cosh(t))+r\ttsm\sinh(t)\varphi_n(r\ttsm\cosh(t))\bigr].
\end{equation}
The exact same computations leads also to
\begin{equation}
\widehat{\varPhi}^{2,{\rm bb}}_{L,r\ttsm\cosh(t)}(y)\xrightarrow[L\to\infty]{}e^{\tfrac{r^2\cosh(t)^2}{2}}\sum_{n=0}^{N-1}\varphi_n(y)\widehat\psi^{\rm bb}_{r,-t}(n)
\end{equation}
in $L^2(\rr)$.
Putting two limits together and recalling the definition of $\widehat\Psi^{\rm bb}_{r,t}$ in \eqref{eq:whPsibb}, we complete the proof for the case of BB.

In the case of BE/RBB, the same arguments lead to the following formula
(note that the two terms in the definition of $\widehat{\Psi}^{\rm be/rbb}_L$ in \eqref{eq:hatPsis} become just one in this formula; this is because, thanks to the parity properties of the Hermite functions, the evaluation at $w=r\cosh(t)$ and $w=-r\cosh(t)$ give the same answer):
\begin{align}
\widehat{\Psi}^{\rm be}_L(x,y)\xrightarrow[L\to\infty]{}\widehat\Psi_{r,t}^{\rm be}(x,y)&=\Bigl(\sum_{n=0}^{N-1}\varphi_{2n+1}(x)\widehat\psi^{\rm be}_{r,t}(2n+1)\Bigr)\Bigl(\sum_{m=0}^{N-1}\varphi_{2m+1}(y)\widehat\psi^{\rm be}_{r,-t}(2m+1)\Bigr),\\
\widehat{\Psi}^{\rm rbb}_L(x,y)\xrightarrow[L\to\infty]{}\widehat\Psi_{r,t}^{\rm rbb}(x,y)&=\Bigl(\sum_{n=0}^{N-1}\varphi_{2n}(x)\widehat\psi^{\rm rbb}_{r,t}(2n)\Bigr)\Bigl(\sum_{m=0}^{N-1}\varphi_{2m}(y)\widehat\psi^{\rm rbb}_{r,-t}(2m)\Bigr),
\end{align}
where
\begin{multline}
\widehat\psi^{\rm be}_{r,t}(n)=2\sqrt{2\cosh(t)}\,e^{-nt}\biggl[\varphi_n'(r\ttsm\cosh(t))+r\ttsm\sinh(t)\varphi_n(r\ttsm\cosh(t))\\
+\sum_{k=1}^{\infty}e^{k(k+1)r^2\sinh(2t)}\bigl[\varphi_n'\bigl((2k+1)r\ttsm\cosh(t)\bigr)+(2k+1)r\ttsm\sinh(t)\varphi_n\bigl((2k+1)r\ttsm\cosh(t)\bigr)\bigr]\biggr],
\end{multline}
and
\begin{multline}
\widehat\psi^{\rm rbb}_{r,t}(n)=2\sqrt{2\cosh(t)}\,e^{-nt}\biggl[\varphi_n'(r\ttsm\cosh(t))+r\ttsm\sinh(t)\varphi_n(r\ttsm\cosh(t))\\
+\sum_{k=1}^{\infty}(-1)^ke^{k(k+1)r^2\sinh(2t)}\bigl[\varphi_n'\bigl((2k+1)r\ttsm\cosh(t)\bigr)+(2k+1)r\ttsm\sinh(t)\varphi_n\bigl((2k+1)r\ttsm\cosh(t)\bigr)\bigr]\biggr],
\end{multline}
and this leads to the desired formula.
\end{proof}

\begin{proof}[Proof of Lemma \ref{lem:limdelta}]
We will focus on the BB case, and then explain the main differences for BE/RBB.
Recalling that $h_{\ep,\delta}(s)=\cosh(s)(r+\ep\uno{s\in[t,t+\delta]})$ we have, by the semigroup property,
\begin{equation}
\lim_{\epsilon\to 0}\frac{1}{\epsilon}\left[\Theta_{[-L,L]}^{h_{\ep,\delta},{\rm bb}}-\Theta_{[-L,L]}^{h_{0,\delta},{\rm bb}}\right]=\Theta_{[-L,t]}^{(r),{\rm bb}}\left[\lim_{\epsilon\to 0}\frac{1}{\epsilon}\left[\Theta_{[t,t+\delta]}^{(r+\ep),{\rm bb}}-\Theta_{[t,t+\delta]}^{(r),{\rm bb}}\right]\right]\Theta_{[t+\delta,L]}^{(r),{\rm bb}}.
\end{equation}
Using the kernel $\wt{\Theta}_{[\ell_1,\ell_2]}^{(r),{\rm bb}}$ (see its definition in \eqref{eq:wtthetarRpre}) we can write
\begin{multline}\label{eq:nibmdevbeta}
\left[\p_\beta\Theta_{[-L,L]}^{h_{\beta,\delta},{\rm bb}}(x,y)\right]_{\beta=0}=\int_{-\infty}^{r\ttsm\cosh(t)}\!dz_1\int_{-\infty}^{r\ttsm\cosh(t+\delta)}\!dz_2\,\wt{\Theta}^{(r),{\rm bb}}_{[-L,t]}(x,z_1)\uno{x\leq r\ttsm\cosh(L)}\\
\times\left(\bigl[\p_\ep\wt{\Theta}_{[t,t+\delta]}^{(r+\ep),{\rm bb}}(z_1,z_2)\bigr]_{\ep=0}\right)\wt{\Theta}^{(r),{\rm bb}}_{[t+\delta,L]}(z_2,y)\uno{y\leq r\ttsm\cosh(L)}.
\end{multline}
For convenience, we let $\alpha=\frac{1}{4}e^{2t}$ and $\beta=\frac{1}{4}e^{2(t+\delta)}$ and introduce the kernel
\begin{equation}\label{eq:Upsilonbb}
\Upsilon^{\rm bb}(z_1,z_2)=e^{(z_1^2-z_2^2)/2}\bigl[\p_\ep\wt{\Theta}_{[t,t+\delta]}^{(r+\ep),{\rm bb}}(z_1,z_2)\bigr]_{\ep=0}.
\end{equation}
We perform the change of variables $z_1\mapsto e^{-t}\sqrt{\beta-\alpha}\,z_1+r\ttsm\cosh(t)$ and $z_2\mapsto e^{-t-\delta}\sqrt{\beta-\alpha}\,z_2+r\ttsm\cosh(t+\delta)$ above and use the kernels $\varTheta_1^{\rm bb}$ and $\varTheta_2^{\rm bb}$ defined in \eqref{eq:Thetakernels} to write
\begin{multline}
\tfrac1\delta\tsm\left[\p_\beta\Theta_{[-L,L]}^{h_{\beta,\delta},{\rm bb}}(x,y)\right]_{\beta=0}=\int_{-\infty}^{0}\!dz_1\int_{-\infty}^{0}\!dz_2\,\tfrac1{\sqrt{\delta}}\varTheta_1^{\rm bb}\bigl(x,e^{-t}\sqrt{\beta-\alpha}\,z_1+r\ttsm\cosh(t)\bigr)\\
\times\tfrac1{\sqrt{\delta}}\varTheta_2^{\rm bb}\bigl(e^{-t-\delta}\sqrt{\beta-\alpha}\,z_2+r\ttsm\cosh(t+\delta),y\bigr)\\
\times e^{-2t-\delta}(\beta-\alpha)\Upsilon^{\rm bb}\bigl(e^{-t}\sqrt{\beta-\alpha}\,z_1+r\ttsm\cosh(t),e^{-t-\delta}\sqrt{\beta-\alpha}\,z_2+r\ttsm\cosh(t+\delta)\bigr).
\end{multline}
Now we need to take $\delta\to0$.
Note that $\beta\longrightarrow\alpha$ in this limit and $\varTheta_i^{\rm bb}\bigl(x,r\ttsm\cosh(t)\bigr)=0$ for $i=1,2$, so from the first two lines we get the product of two derivatives. The limit of the last line above can also be computed explicitly, by using (see \eqref{eq:Upsilonbb})
\begin{multline}
\Upsilon^{\rm bb}\bigl(e^{-t}\sqrt{\beta-\alpha}\,z_1+r\ttsm\cosh(t),e^{-t-\delta}\sqrt{\beta-\alpha}\,z_2+r\ttsm\cosh(t+\delta)\bigr)\\
=-\frac{\beta z_1+\alpha z_2+(z_1+z_2)/4}{\sqrt{\pi}(\beta-\alpha)}\,e^{t+\delta+r\sqrt{\beta-\alpha}(z_1-z_2)-r^2(\beta-\alpha)-(z_1+z_2)^2/4},
\end{multline}
and yields $-\frac{(z_1+z_2)\ttsm\cosh(t)}{2\sqrt{\pi}}e^{-(z_1+z_2)^2/4}$. Therefore
\begin{multline}
\lim_{\delta\to 0}\frac{1}{\delta}\left[\p_\beta\Theta_{[-L,L]}^{h_{\beta,\delta},{\rm bb}}\right]_{\beta=0}(x,y)=\int_{-\infty}^{0}\!dz_1\int_{-\infty}^{0}\!dz_2\,\tfrac{z_1z_2(-z_1-z_2)\ttsm\cosh(t)}{4\sqrt{\pi}}e^{-(z_1+z_2)^2/4}\\
\times\bigl.\p_w\varTheta_1^{\rm bb}(x,w)\bigr|_{w=r\ttsm\cosh(t)}\bigl.\p_w\varTheta_2^{\rm bb}(w,y)\bigr|_{w=r\ttsm\cosh(t)}.
\end{multline}
In order to obtain \eqref{eq:nibmlimdelta}, we evaluate the integral in $z_1$ and $z_2$ which gives $2\sqrt{\pi}$. This completes the proof for the case of BB.

In the case of BE/RBB we have
\begin{multline}\label{eq:bedevbeta}
\left[\p_\beta\Theta_{[-L,L]}^{h_{\beta,\delta},{{\rm be}/{\rm rbb}}}(x,y)\right]_{\beta=0}=\int_{-r\ttsm\cosh(t)}^{r\ttsm\cosh(t)}\!dz_1\int_{-r\ttsm\cosh(t+\delta)}^{r\ttsm\cosh(t+\delta)}\!dz_2\,\wt{\Theta}^{(r),{{\rm be}/{\rm rbb}}}_{[-L,t]}(x,z_1)\uno{|x|\leq r\ttsm\cosh(L)}\\
\times\left(\bigl[\p_\ep\wt{\Theta}_{[t,t+\delta]}^{(r+\ep),{{\rm be}/{\rm rbb}}}(z_1,z_2)\bigr]_{\ep=0}\right)\wt{\Theta}^{(r),{{\rm be}/{\rm rbb}}}_{[t+\delta,L]}(z_2,y)\uno{|y|\leq r\ttsm\cosh(L)}.
\end{multline}
We may use the formulas in \eqref{eq:LambdaL} and \eqref{eq:infiniteRefl} to write the derivative in $\ep$ as
\begin{equation}
\sum_{k\in\zz\setminus\{0\}}\left[(-1)^k\p_\ep{\sf R}_{[t,t+\delta]}^{((r+\ep)k),{\rm bb}}\bigl(z_1,(-1)^{k+1}z_2\bigr)\right]_{\ep=0}
\end{equation}
and then proceed as above.
We introduce the kernel
\begin{equation}
\Upsilon^{{\rm be}/{\rm rbb}}_k(z_1,z_2)=e^{(z_1^2-z_2^2)/2}\left[(-1)^k\p_\ep{\sf R}_{[t,t+\delta]}^{((r+\ep)k),{\rm bb}}\bigl(z_1,(-1)^{k+1}z_2\bigr)\right]_{\ep=0},
\end{equation} 
and then perform the change of variables $z_1\mapsto e^{-t}\sqrt{\beta-\alpha}\,z_1+kr\ttsm\cosh(t)$ and $z_2\mapsto e^{-t-\delta}\sqrt{\beta-\alpha}\,z_2+(-1)^{k+1}kr\ttsm\cosh(t+\delta)$ (here we still use the notations $\alpha=\frac{1}{4}e^{2t}$, $\beta=\frac{1}{4}e^{2(t+\delta)}$ and the kernels $\varTheta_1^{{\rm be}/{\rm rbb}}$, $\varTheta_2^{{\rm be}/{\rm rbb}}$ defined in \eqref{eq:Thetakernels}) to get
\begin{multline}\label{eq:ksum}
\left[\p_\beta\Theta_{[-L,L]}^{h_{\beta,\delta},{{\rm be}/{\rm rbb}}}(x,y)\right]_{\beta=0}\\
=\sum_{k\in\zz\setminus\{0\}}\int_{(-k-1)r\ttsm\cosh(t)e^{t}/\sqrt{\beta-\alpha}}^{(-k+1)r\ttsm\cosh(t)e^{t}/\sqrt{\beta-\alpha}}\!dz_1\int_{((-1)^{k}k-1)r\cosh(t+\delta)e^{t+\delta}/\sqrt{\beta-\alpha}}^{((-1)^{k}k+1)r\cosh(t+\delta)e^{t+\delta}/\sqrt{\beta-\alpha}}\!dz_2\,e^{-2t-\delta}(\beta-\alpha)\\
\times\varTheta_1^{{\rm be}/{\rm rbb}}\bigl(x,e^{-t}\sqrt{\beta-\alpha}\,z_1+kr\ttsm\cosh(t)\bigr)\varTheta_2^{{\rm be}/{\rm rbb}}\bigl(e^{-t-\delta}\sqrt{\beta-\alpha}\,z_2+(-1)^{k+1}kr\ttsm\cosh(t+\delta),y\bigr)\\
\times\Upsilon^{{\rm be}/{\rm rbb}}_k\bigl(e^{-t}\sqrt{\beta-\alpha}\,z_1+kr\ttsm\cosh(t),e^{-t-\delta}\sqrt{\beta-\alpha}\,z_2+(-1)^{k+1}kr\ttsm\cosh(t+\delta)\bigr),
\end{multline}
where the kernel $\Upsilon^{{\rm be}/{\rm rbb}}_k$ can be computed explicitly and satisfies the following limit: writing $\gamma=\beta- \alpha$
\begin{multline}\label{eq:limdelUps}
\lim_{\delta\to 0}e^{-2t-\delta}\gamma\Upsilon^{{\rm be}/{\rm rbb}}_k\bigl(e^{-t}\sqrt{\gamma}\,z_1+kr\ttsm\cosh(t),e^{-t-\delta}\sqrt{\gamma}\,z_2+(-1)^{k+1}kr\ttsm\cosh(t+\delta)\bigr)\\
=-\frac{k\ttsm\cosh(t)((-1)^{k+1}z_1+z_2)}{2\sqrt{\pi}}e^{-((-1)^{k+1}z_1+z_2)^2/4}.
\end{multline}
We split the $k$ sum in \eqref{eq:ksum} into two regions, $\zz\setminus\{-1,0,1\}$ and $\{-1,1\}$. For each $k$ in the first region, since the kernels have a Gaussian form and since note that $1/\sqrt{\beta-\alpha}\to\infty$ as $\delta\to0$, it is not hard to see that the double integral can be bounded by $c_1\tts e^{-c_2k^2/(\beta-\alpha)}$ for some contants $c_1,c_2>0$ independent of $k$ and $\delta$, hence the whole sum can be bounded by $2\sum_{k\geq2}e^{-c_2k^2/(\beta-\alpha)}\leq c_1'e^{-4c_2/(\beta-\alpha)}\longrightarrow0$ as $\delta\to0$. On the second region, it is straightforward to see that when $\delta\to0$ the double integral becomes $\int_{-\infty}^{0}\!dz_1\int_{-\infty}^{0}\!dz_2$ and $\int_{0}^{\infty}\!dz_1\int_{0}^{\infty}\!dz_2$, respectively, when $k=1$ and $k=-1$. The same Gaussian bounds which we just used allow us now to replace the original limits in the integrals by the ones we just indicated and take the $\delta\to0$ limit inside. These facts, together with \eqref{eq:limdelUps} and the fact that $\varTheta_1^{{\rm be}/{\rm rbb}}\bigl(x,\pm r\ttsm\cosh(t)\bigr)=\varTheta_2^{{\rm be}/{\rm rbb}}\bigl(\pm r\ttsm\cosh(t+\delta),y\bigr)=0$, imply that
\begin{multline}
\lim_{\delta\to 0}\frac{1}{\delta}\left[\p_\beta\Theta_{[-L,L]}^{h_{\beta,\delta},{{\rm be}/{\rm rbb}}}\right]_{\beta=0}(x,y)\\
=\int_{-\infty}^{0}\!dz_1\int_{-\infty}^{0}\!dz_2\,\tfrac{z_1z_2(-z_1-z_2)\ttsm\cosh(t)}{4\sqrt{\pi}}e^{-(z_1+z_2)^2/4}\bigl[\p_w\varTheta_1^{{\rm be}/{\rm rbb}}(x,w)\p_w\varTheta_2^{{\rm be}/{\rm rbb}}(w,y)\bigr]_{w=r\ttsm\cosh(t)}\\
+\int_{0}^{\infty}\!dz_1\int_{0}^{\infty}\!dz_2\,\tfrac{z_1z_2(z_1+z_2)\ttsm\cosh(t)}{4\sqrt{\pi}}e^{-(z_1+z_2)^2/4}\bigl.[\p_w\varTheta_1^{{\rm be}/{\rm rbb}}(x,w)\p_w\varTheta_2^{{\rm be}/{\rm rbb}}(w,y)\bigr]_{w=-r\ttsm\cosh(t)}.
\end{multline}
Again the integral in $z_1$ and $z_2$ evaluates to $2\sqrt{\pi}$. This gives \eqref{eq:belimdelta} and completes the proof for the case of BE/RBB.
\end{proof}

\subsection{Proof of Corollaries \ref{cor:bbCvgce} and \ref{cor:berbbCvgce}}

An explicit expression for the joint density of $\cm$ and $\ct$ (see \eqref{eq:Tdef} and \eqref{eq:Mdef}), which we will denote as $f(r,t)$, was obtained in \cite{mqr}.
To state the formula we need to introduce the function
\begin{equation}
\psi_{r,t}(x)=2e^{xt}\left[t\Ai(x+r+t^2)+\Ai'(x+r+t^2)\right],
\end{equation}
the Airy kernel
\begin{equation}
\K(x,y)=\int_0^\infty d\lambda\Ai(x+\lambda)\Ai(y+\lambda),
\end{equation}
and the rank one kernel $\Psi_{r,t}(x,y)=\varPhi_{r,t}(x)\varPhi_{r,-t}(y)$, where
\begin{equation}
\varPhi_{r,t}(x)=\int_0^\infty d\lambda\Ai(x+\lambda)\psi_{r,t}(\lambda).
\end{equation}
Then the joint density $f(r,t)$ obtained in \cite{mqr} can be rewritten as
\begin{align}
f(r,t)&=\tr\!\left[({\sf I}-\K\varrho_r\K)^{-1}\Psi_{r,t}\right]\det\!\left({\sf I}-\K\varrho_r\K\right)\\
&=\det\!\left({\sf I}-\K\varrho_r\K+\Psi_{r,t}\right)-\det\!\left({\sf I}-\K\varrho_r\K\right).\label{eq:densairy2}
\end{align}
We are going to show that, under suitable scaling, the joint densities of $\cm_N^\star$ and $\ct_N^\star$ for the three models converge to $f(r,t)$, i.e.
\begin{equation}
\frac{1}{4\sqrt{N}}f_N^{\rm bb}\Bigl(\sqrt{N}+\frac{r}{2N^{1/6}},\frac12+\frac{t}{2N^{1/3}}\Bigr)\xrightarrow[N\to\infty]{}f(r,t)
\end{equation}
and
\begin{equation}
\frac{1}{4\sqrt{2N}}f_N^{{\rm be}/{\rm rbb}}\Bigl(\sqrt{2N}+\frac{r}{2^{7/6}N^{1/6}},\frac12+\frac{t}{2^{4/3}N^{1/3}}\Bigr)\xrightarrow[N\to\infty]{}f(r,t),
\end{equation}
which will prove the two corollaries.

We start with the BB case. For $r\geq 0$ and $t\in\rr$ let 
\[{\wt r}_N=\sqrt{N}+\tfrac{r}{2N^{1/6}},\qquad {\wt t}_N=\tfrac12+\tfrac{t}{2N^{1/3}}\]
 and recall that $g(t)=\frac{1}{\sqrt{2t(1-t)}}$. A simple scaling argument on the right hand side of \eqref{eq:densBB} leads to
\begin{equation}
\frac{1}{4\sqrt{N}}f_N^{\rm bb}\bigl({\wt r}_N,{\wt t}_N\bigr)=\det\!\left({\sf I}-\wtKGN \varrho_r \wtKGN+{\wt \Psi}^{\rm bb}_{N,r,t}\right)-\det\!\left({\sf I}-\wtKGN \varrho_r \wtKGN\right),
\end{equation}
with $\wtKGN(x,y)=\kappa_N\KGN(\tilde x_N,\tilde y_N)$ and ${\wt \Psi}^{\rm bb}_{N,r,t}(x,y)=2^{-5/2}N^{-2/3}\Psi^{\rm bb}_{{\wt r}_N,{\wt t}_N}(\tilde x_N,\tilde y_N)$, where $\kappa_N=2^{-1/2}N^{-1/6}$, $\tilde x_N=\sqrt{2N}+\kappa_N x$, and $\tilde y_N=\sqrt{2N}+\kappa_N y$. 
On the other hand, it is a basic fact in random matrix theory that $\wtKGN$ converges (in trace norm) to $\K$ as $N\to\infty$ (see e.g. \cite{andGuioZeit}).
In view of \eqref{eq:densairy2} and \eqref{eq:inebound}, it remains to show that
\begin{equation}\label{eq:Psibbconv}
{\wt \Psi}^{\rm bb}_{N,r,t}\xrightarrow[N\to\infty]{}\Psi_{r,t}
\end{equation}
in trace norm. 
Recall that ${\wt\Psi}^{\rm bb}_{N,r,t}$ is a rank one kernel which can be written as (see \eqref{eq:Psibb}) ${\wt\Psi}^{\rm bb}_{N,r,t}={\wt\varPhi}^{\rm bb}_{N,r,t}\otimes{\wt\varPhi}^{\rm bb}_{N,r,-t}$ with
\begin{equation}\label{eq:sumestconv}
{\wt \varPhi}^{\rm bb}_{N,r,t}(x):=2^{-5/4}N^{-1/3}\sum_{n=0}^{N-1}\varphi_n(\sqrt{2N}+\kappa_N x)\psi^{\rm bb}_{{\wt r}_N,{\wt t}_N}(n),
\end{equation}
where $\psi^{\rm bb}_{r,t}$ was defined in \eqref{eq:psibb}.
We then estimate (see \eqref{eq:inenorms})
\begin{align}
\|{\wt\Psi}^{\rm bb}_{N,r,t}-\Psi_{r,t}\|_1&\leq \|{\wt\varPhi}^{\rm bb}_{N,r,t}{\wt\varPhi}^{\rm bb}_{N,r,-t}-\varPhi_{r,t}{\wt\varPhi}^{\rm bb}_{N,r,-t}\|_1+\|\varPhi_{r,t}{\wt\varPhi}^{\rm bb}_{N,r,-t}-\varPhi_{r,t}\varPhi_{r,-t}\|_1\\
&=\|{\wt\varPhi}^{\rm bb}_{N,r,t}-\varPhi_{r,t}\|_2\|{\wt\varPhi}^{\rm bb}_{N,r,-t}\|_2+\|\varPhi_{r,t}\|_2\|{\wt\varPhi}^{\rm bb}_{N,r,-t}-\varPhi_{r,-t}\|_2.
\end{align}
We want to show
\begin{lem}\label{lem:L2Phi}
For $t\in\rr$, the following limit holds in $L^2(\rr)$.
\begin{equation}
{\wt\varPhi}^{\rm bb}_{N,r,t}\xrightarrow[N\to\infty]{}\varPhi_{r,t}.
\end{equation}
\end{lem}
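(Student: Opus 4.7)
The plan is to apply Plancherel--Rotach asymptotics for the Hermite functions near the spectral edge $\sqrt{2N}$ and identify the resulting Hermite series as a Riemann sum approximating the integral $\varPhi_{r,t}(x) = \int_0^\infty \Ai(x+\lambda)\psi_{r,t}(\lambda)\,d\lambda$. First I would reindex the summation via $n = N-1-k$ and set $\lambda_k = (2k+1)/(2N^{1/3})$, so that the spacings $\lambda_{k+1}-\lambda_k = 1/N^{1/3}$ form a Riemann mesh over $[0,\infty)$, aligning with the $N^{-1/3}$ prefactor in the definition of $\wt\varPhi^{\rm bb}_{N,r,t}$.

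Next I would expand each ingredient of $\psi^{\rm bb}_{\wt r_N,\wt t_N}(n)$ in powers of $N^{-1/3}$. Using ${\wt t}_N = \frac{1}{2}+\frac{t}{2N^{1/3}}$ and ${\wt r}_N = \sqrt{N}+\frac{r}{2N^{1/6}}$, direct computation gives $g({\wt t}_N) = \sqrt{2}+O(N^{-2/3})$, whence ${\wt r}_N g({\wt t}_N) = \sqrt{2N}+\kappa_N(r+t^2)+O(N^{-5/6})$ and $(2{\wt t}_N-1){\wt r}_N g({\wt t}_N) = t\kappa_N^{-1}+O(1)$, while $\log({\wt t}_N/(1-{\wt t}_N)) = 2tN^{-1/3}+\frac{2}{3}t^3N^{-1}+O(N^{-5/3})$ controls the weight $({\wt t}_N/(1-{\wt t}_N))^{-(N-1-k)/2}$. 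The Plancherel--Rotach asymptotic at the edge yields $\varphi_{N-1-k}(\sqrt{2N}+\kappa_N u)\sim 2^{1/4}N^{-1/12}\Ai(u+\lambda_k)$, and differentiating via the chain rule $\varphi_{N-1-k}'(\sqrt{2N}+\kappa_N u)\sim 2^{3/4}N^{1/12}\Ai'(u+\lambda_k)$. Substituting these into the bracket of $\psi^{\rm bb}$ and matching against $\psi_{r,t}(\lambda) = 2e^{\lambda t}[t\Ai(\lambda+r+t^2)+\Ai'(\lambda+r+t^2)]$, careful bookkeeping of the prefactors should yield
\[\wt\varPhi^{\rm bb}_{N,r,t}(x) \approx \frac{1}{N^{1/3}}\sum_{k=0}^{N-1}\Ai(x+\lambda_k)\psi_{r,t}(\lambda_k),\]
which is the Riemann sum with mesh $1/N^{1/3}$ converging pointwise to $\varPhi_{r,t}(x)$.

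To upgrade pointwise convergence to $L^2(\rr)$ convergence, I would combine the Plancherel--Rotach asymptotic with uniform-in-$N$ decay bounds. For $k$ at the Airy scale $O(N^{1/3})$ the standard uniform error bounds on the edge asymptotic apply, while for very small or very large $k$ the Hermite functions $\varphi_{N-1-k}(\sqrt{2N}+\kappa_N u)$ enjoy super-exponential decay mirroring that of $\Ai(u+\lambda_k)$ (the latter via the saddle-point balance between $e^{\lambda t}$ and Airy decay in $\psi_{r,t}(\lambda)\Ai(x+\lambda)$); together with the super-exponential decay of $\Ai(x+\lambda)$ as $x\to+\infty$, this produces an $N$-independent $L^2$-integrable majorant, and dominated convergence closes the argument. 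The hard part is precisely this uniform error control across the distinct regimes of the summation index $k$ (edge, transition, bulk), where the Plancherel--Rotach asymptotic changes form, so that the residuals are summable in $k$ and integrable in $x$ uniformly in $N$; getting the numerical prefactors of the leading-order term to match up correctly is itself delicate, as the $O(N^{-5/6})$ and $O(N^{-2/3})$ corrections in ${\wt r}_N g({\wt t}_N)$ and $g({\wt t}_N)$ propagate into shifts of the Airy arguments that must not spoil the integrable majorant.
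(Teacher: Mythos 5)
Your plan is a genuinely different route from the paper's, and as written it has a real gap exactly where you flag it. The asymptotic you invoke, $\varphi_{N-1-k}(\sqrt{2N}+\kappa_N u)\sim 2^{1/4}N^{-1/12}\Ai(u+\lambda_k)$ with $\lambda_k\asymp kN^{-1/3}$, is a valid edge-scaling statement only for $k=o(N^{2/3})$; for bulk indices $k\asymp N$ the evaluation point $\sqrt{2N}+\kappa_N u$ sits deep in the exponentially small region of $\varphi_{N-1-k}$, the Plancherel--Rotach formula changes form, and one only gets decay rates that \emph{roughly} mirror $\Ai(u+\lambda_k)$. Since for $t>0$ these tail terms are weighted by the growing factor $e^{\lambda_k t}$ coming from $({\wt t}_N/(1-{\wt t}_N))^{-n/2}$, the whole argument hinges on uniform, summable-in-$k$, $L^2$-integrable-in-$x$ error bounds across the edge, transition and bulk regimes; none of these are supplied, and they are not routine (in particular, differentiating the asymptotic relation to get $\varphi_{N-1-k}'$ is not automatic and needs its own uniform estimate). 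So the proposal is a correct heuristic identification of the limit as a Riemann sum, but not yet a proof.

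The paper avoids this multi-regime analysis entirely by a summation device you are missing: the integral representation \eqref{eq:integrHermKer}, upgraded in Lemma \ref{lem:formulaK} to
$e^{\gamma{\sf D}}\KGN(x,y)=\sqrt{N/2}\,e^{\gamma(N-1/2)}\int_0^\infty dz\,(\cdots)$, collapses the entire sum $\sum_{n=0}^{N-1}e^{\gamma n}\varphi_n(x)\varphi_n(y)$ (which is what $\sum_n\varphi_n(x)\psi^{\rm bb}_{{\wt r}_N,{\wt t}_N}(n)$ becomes after writing the bracket as $\p_y+\text{const}$ acting on the kernel) into a single $z$-integral involving only $\varphi_N$ and $\varphi_{N-1}$ evaluated at points $\sqrt{2N}+\kappa_N(\,\cdot\,)$ near the edge. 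There the standard uniform asymptotics $\varphi_N(\sqrt{2N}+\kappa_N\xi)=2^{1/4}N^{-1/12}(\Ai(\xi)+\mathcal{O}(N^{-2/3}))$ and its derivative analogue apply on the whole integration range, and the $z$-integral converges directly to the $\lambda$-integral defining $\varPhi_{r,t}$. If you want to salvage your term-by-term approach, you would need to import uniform Hermite bounds valid in the forbidden region (of the type used in Lemma \ref{lem:KrhoKbound} and Lemma \ref{lem:intHermite}) to build the dominating majorant; otherwise, the clean fix is to use the summation identity first and do asymptotics second.
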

As a consequence we also get that $\|{\wt\varPhi}^{\rm bb}_{N,r,-t}\|_2$ is uniformly bounded in $N$. 
The lemma thus yields \eqref{eq:Psibbconv}, which completes the proof of Corollary \ref{cor:bbCvgce}.

\begin{proof}[Proof of Lemma \ref{lem:L2Phi}]
Setting $\gamma(t)=-\log\left((1+tN^{-1/3})/(1-tN^{-1/3})\right)/2$ and using the definition \eqref{eq:psibb} and Lemma \ref{lem:formulaK}, we can write the sum on the right hand side of \eqref{eq:sumestconv} as
\begin{multline}
2^{-3/4}N^{-1/3}g({\wt t}_N)^{3/2}\left.\Big[\p_y+\frac{t\tts g({\wt t}_N){\wt r}_N}{N^{1/3}}\Big]e^{\gamma(t){\sf D}}\KGN(x,y)\right|_{\subalign{x&=\sqrt{2N}+\kappa_N x\\y&={\wt r}_Ng({\wt t}_N)}}\\
=2^{-7/4}g({\wt t}_N)^{3/2}e^{\gamma(t)(N-\frac{1}{2})}\int_0^\infty dz\,e^{-s(\gamma(t))\left((\sqrt{2N}+\kappa_N x)\kappa_N z+{\wt r}_Ng({\wt t}_N)\kappa_N z+c(\gamma(t))(\kappa_N z)^2\right)}\\
\times\biggl[\varphi_N\bigl(\tau_N^{(1)}(x,z)\bigr)\varphi_{N-1}'\bigl(\tau_N^{(2)}(r,t,z)\bigr)+\varphi_{N-1}\bigl(\tau_N^{(1)}(x,z)\bigr)\varphi_N'\bigl(\tau_N^{(2)}(r,t,z)\bigr)\\
+\bigl(-s(\gamma(t))\kappa_N z+(2{\wt t}_N-1){\wt r}_Ng({\wt t}_N)\bigr)\Bigl[\varphi_N\bigl(\tau_N^{(1)}(x,z)\bigr)\varphi_{N-1}\bigl(\tau_N^{(2)}(r,t,z)\bigr)\\
+\varphi_{N-1}\bigl(\tau_N^{(1)}(x,z)\bigr)\varphi_N\bigl(\tau_N^{(2)}(r,t,z)\bigr)\Bigr]\biggr],
\end{multline}
where we have used $\tau_N^{(1)}(x,z)=\sqrt{2N}+\kappa_N x+c(\gamma(t))\kappa_N z$, $\tau_N^{(2)}(r,t,z)={\wt r}_Ng({\wt t}_N)+c(\gamma(t))\kappa_N z$ with $s(t)=\sinh(t/2)$ and $c(t)=\cosh(t/2)$.
Now we can check that this expression converges to $\varPhi_{r,t}(x)$ in $L^2(\rr)$ by using the known asymptotics $\varphi_N(\sqrt{2N}+\kappa_N x)=2^{1/4}N^{-1/12}\left(\Ai(x)+\mathcal{O}(N^{-2/3})\right)$ and $\varphi_N'(\sqrt{2N}+\kappa_N x)=2^{3/4}N^{1/12}\left(\Ai'(x)+\mathcal{O}(N^{-2/3})\right)$, together with the fact that $\tau_N^{(1)}(x,z)=\sqrt{2N}+\kappa_N(x+z)+\mathcal{O}(N^{-5/6})$, $\tau_N^{(2)}(r,t,z)=\sqrt{2N}+\kappa_N(t^2+r+z)+\mathcal{O}(N^{-1/2})$ and $g({\wt t}_N)=\sqrt{2}+t^2N^{-2/3}/\sqrt{2}+\mathcal{O}(N^{-4/3})$.\\
\end{proof}

We will prove next the convergence in the case of BE, the proof for RBB is very similar. Proceeding analogously to the proof in the BB case, we write ${\wt\Psi}^{\rm be}_{N,r,t}={\wt\varPhi}^{\rm be}_{N,r,t}\otimes{\wt\varPhi}^{\rm be}_{N,r,-t}$ with
\begin{multline}\label{eq:tvarPhibe}
{\wt \varPhi}^{\rm be}_{N,r,t}(x):=2^{-7/12}N^{-1/3}\sum_{n=0}^{N-1}\varphi_{2n+1}(\sqrt{4N}+\kappa_{2N} x)\psi^{\rm bb}_{{\wt r}_{2N},{\wt t}_{2N}}(2n+1)\\
+2^{-1/12}N^{-1/3}\tts g({\wt t}_{2N})^{3/2}\sum_{n=0}^{N-1}e^{\gamma(t)(2n+1)}\varphi_{2n+1}(\sqrt{4N}+\kappa_{2N} x)\sum_{k=1}^{\infty}\psi^{\rm be}_{{\wt r}_{2N},{\wt t}_{2N},k}(2n+1),
\end{multline}
where $\psi^{\rm be}_{r,t,k}(n)$ is the $k^{th}$ summand in the infinite sum in \eqref{eq:psibe}, that is
\begin{multline}
\psi^{\rm be}_{{\wt r}_{2N},{\wt t}_{2N},k}(2n+1)=e^{2k(k+1)(2{\wt t}_{2N}-1){\wt r}_{2N}^2\tts g({\wt t}_{2N})^2}\\
\times\Bigl[\varphi_{2n+1}'\bigl((2k+1){\wt r}_{2N}\tts g({\wt t}_{2N})\bigr)+(2k+1)(2{\wt t}_{2N}-1){\wt r}_{2N}\tts g({\wt t}_{2N})\varphi_{2n+1}\bigl((2k+1){\wt r}_{2N}\tts g({\wt t}_{2N})\bigr)\Bigr].
\end{multline}
We have to show that the function ${\wt \varPhi}^{\rm be}_{N,r,t}$ converges to $\varPhi_{r,t}$ in $L^2(\rr)$. Note that the first sum in \eqref{eq:tvarPhibe} actually converges to $\varPhi_{r,t}$ as in the case of BB. On the other hand, one can check that the second term goes to $0$ by using the asymptotics of the Hermite functions in \eqref{eq:asympHermite} and the fact that ${\wt r}_{2N}\tts g({\wt t}_{2N})=\sqrt{4N}+\kappa_{2N}(t^2+r)+\mathcal{O}(N^{-5/6})$ to estimate that $\varphi_{2n+1}'\bigl((2k+1){\wt r}_{2N}\tts g({\wt t}_{2N})\bigr)\leq c_1\tts e^{-c_2 N k^2}$ and $\varphi_{2n+1}\bigl((2k+1){\wt r}_{2N}\tts g({\wt t}_{2N})\bigr)\leq c_1\tts e^{-c_2 N k^2}$ when $k>0,n\in\{0,\ldots,N-1\}$ for some $c_1,c_2>0$. This together with the uniform bound of Hermite functions $\sup_{x\in\rr}\varphi_n(x)<c$ for any $n\geq 1$ (see e.g. \cite{NIST:DLMF}), imply that $\sum_{n=0}^{N-1}e^{\gamma(t)(2n+1)}\varphi_{2n+1}(\sqrt{4N}+\kappa_{2N} x)\psi^{\rm be}_{{\wt r}_{2N},{\wt t}_{2N},k}(2n+1)\leq c'_1\tts e^{-c'_2 N k^2}$ for $N$ large enough and hence completes the proof of Corollaries \ref{cor:berbbCvgce}.

\section{Small deviations for the argmax for non-intersecting Brownian bridges}\label{sec:bounds}

The proofs in this section follow \cite{quastelRemTails}, where the tails of $\ct$ were studied.
Throughout the section we will use $c_1,c_2,\ldots$ to denote positive constants whose value may change from line to line.

The Hermite kernel has an integral representation given as follows \cite[Sec. 4]{Aubrun2005}:
\begin{equation}\label{eq:integrHermKer}
\sum_{n=0}^{N-1}\varphi_n(x)\varphi_n(y)=\sqrt{\frac{N}{2}}\int_0^{\infty}dz\bigl(\varphi_N(x+z)\varphi_{N-1}(y+z)+\varphi_{N-1}(x+z)\varphi_N(y+z)\bigr).
\end{equation}
In the following lemma we will use this formula to derive an integral representation for the kernel $e^{t{\sf D}}\KGN$, which will be used repeatedly throughout this section:

\begin{lem}\label{lem:formulaK}
For all $t\in\rr$ we have
\begin{multline}
e^{t{\sf D}}\KGN(x,y)=\sqrt{\tfrac{N}{2}}e^{t(N-\frac12)}\int_0^{\infty}dz\biggl[e^{-s(t)((x+y)z+c(t)z^2)}\\
\times\Bigl(\varphi_N\bigl(x+c(t)z\bigr)\varphi_{N-1}\bigl(y+c(t)z\bigr)+\varphi_{N-1}\bigl(x+c(t)z\bigr)\varphi_N\bigl(y+c(t)z\bigr)\Bigr)\biggr],
\end{multline}
where $s(t)=\sinh(t/2)$ and $c(t)=\cosh(t/2)$.
\end{lem}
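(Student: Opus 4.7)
The plan is to derive the formula from the $t=0$ case \eqref{eq:integrHermKer} by applying $e^{t{\sf D}/2}$ to both variables of $\KGN$. Since $\KGN=\sum_{n=0}^{N-1}\varphi_n\otimes\varphi_n$ is the spectral projection of ${\sf D}$ onto the first $N$ eigenspaces, the spectral calculus gives the operator identity $e^{t{\sf D}}\KGN=e^{t{\sf D}/2}\KGN\, e^{t{\sf D}/2}$. Computing the kernel of the right-hand side starting from the integral representation \eqref{eq:integrHermKer} then reduces to evaluating $e^{t{\sf D}/2}$ on translates of Hermite functions, i.e.\ computing $\bigl(e^{t{\sf D}/2}\varphi_n(\cdot+z)\bigr)(x)$ for $n=N,N-1$.

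The key ingredient is derived from the commutators $[{\sf D},x]=-\partial_x$ and $[{\sf D},\partial_x]=-x$, which follow immediately from ${\sf D}=-\tfrac{1}{2}(\Delta-x^2+1)$. Expanding the power series $e^{tA/2}B\tts e^{-tA/2}=\sum_{k\geq 0}(t/2)^k\operatorname{ad}_A^k B/k!$ and recognizing the result as hyperbolic functions yields the conjugation identities
\[
  e^{t{\sf D}/2}\tts x\tts e^{-t{\sf D}/2}=c(t)\tts x-s(t)\partial_x,\qquad e^{t{\sf D}/2}\partial_x\tts e^{-t{\sf D}/2}=c(t)\partial_x-s(t)\tts x.
\]
Since $T_z:=e^{z\partial_x}$ is translation by $z$, this gives $e^{t{\sf D}/2}T_ze^{-t{\sf D}/2}=\exp\bigl(c(t)z\partial_x-s(t)zx\bigr)$. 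The commutator $[c(t)z\partial_x,-s(t)zx]=-s(t)c(t)z^2$ is a scalar, so the Baker--Campbell--Hausdorff disentangling formula produces
\[
  e^{c(t)z\partial_x-s(t)zx}=e^{-s(t)c(t)z^2/2}\tts e^{-s(t)zx}\tts T_{c(t)z},
\]
and combining with $e^{t{\sf D}/2}\varphi_n=e^{tn/2}\varphi_n$ yields the closed-form identity
\[
  \bigl(e^{t{\sf D}/2}\varphi_n(\cdot+z)\bigr)(x)=e^{tn/2}\tts e^{-s(t)xz-s(t)c(t)z^2/2}\tts\varphi_n\bigl(x+c(t)z\bigr).
\]

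Inserting this into \eqref{eq:integrHermKer} after applying $e^{t{\sf D}/2}$ in both the $x$- and $y$-variables, the two exponential prefactors multiply to $e^{-s(t)(x+y)z-s(t)c(t)z^2}$, the eigenvalue factors combine into $e^{tN/2}e^{t(N-1)/2}=e^{t(N-1/2)}$ (the same for both summands under the $N\leftrightarrow N-1$ swap), and the shifted Hermite functions reproduce precisely the symmetric bracket on the right-hand side of the lemma. The main technical point to dispatch is that $e^{t{\sf D}/2}$ is unbounded on $L^2(\rr)$ for $t>0$, so the Baker--Campbell--Hausdorff manipulations and the interchange of $e^{t{\sf D}/2}$ with the $z$-integral require justification. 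I plan to handle this by first running the argument rigorously for $t\leq 0$, where $e^{t{\sf D}/2}$ is a contraction and all operators involved preserve the Schwartz class, and then extending to all $t\in\rr$ by analytic continuation: the left-hand side is the finite sum $\sum_{n=0}^{N-1}e^{tn}\varphi_n(x)\varphi_n(y)$, which is entire in $t$, while the integrand on the right-hand side depends analytically on $t$ with uniform Gaussian decay in $z$ (from the Hermite functions), so the $z$-integral defines an entire function of $t$ as well.
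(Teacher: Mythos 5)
Your proposal is correct and follows the same skeleton as the paper: both use the factorization $e^{t{\sf D}}\KGN=e^{\frac12 t{\sf D}}\KGN e^{\frac12 t{\sf D}}$ and then push $e^{\frac12 t{\sf D}}$ through the integral representation \eqref{eq:integrHermKer} onto the shifted Hermite functions. The one place where you genuinely diverge is in how the key identity $e^{\frac12 t{\sf D}}\varphi_{n,z}(x)=e^{tn/2-s(t)(xz+c(t)z^2/2)}\varphi_n(x+c(t)z)$ is established: you derive it from the harmonic-oscillator commutation relations via a BCH disentangling, whereas the paper (Lemma \ref{lem:etDvarphi}) computes $e^{s{\sf D}}\varphi_{n,z}$ directly from the explicit Mehler kernel \eqref{eq:etDkernel} and the contour representation \eqref{eq:integhermite}, reducing everything to one Gaussian integral. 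Your route is more conceptual but shifts the burden onto justifying operator-algebra identities for unbounded operators; in particular, ``run the argument rigorously for $t\le0$'' is not quite automatic, since the conjugation identities $e^{\frac12 t{\sf D}}x\ts e^{-\frac12 t{\sf D}}$ etc.\ still involve the unbounded factor $e^{-\frac12 t{\sf D}}$ even when $t\le0$. The clean fix is to treat the BCH computation as producing a \emph{candidate} formula and then verify it directly on the Gaussian-times-polynomial functions involved (e.g.\ against the Mehler kernel, or by checking that both sides solve $\partial_tu=\tfrac12{\sf D}u$ with the same data at $t=0$); once that is done, your analytic-continuation step in $t$ is sound (the left side is a finite exponential sum and the right side has the required locally uniform Gaussian decay in $z$, since $|s(t)|<c(t)$), and it plays the same role as the paper's extension of $e^{t{\sf D}}\varphi_{n,z}$ to $t>0$ via the inverse relation $e^{t{\sf D}}\theta_{n,-t,z}=\varphi_{n,z}$.
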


The proof depends on the following result:

\begin{lem}\label{lem:etDvarphi}
Given $t,z\in\rr$, define the shifted Hermite function $\varphi_{n,z}(x)=\varphi_n(x+z)$ and the function $\theta_{n,t,z}(x)=e^{tn-\sinh(t)(xz+\cosh(t)z^2/2)}\varphi_{n,\cosh(t)z}(x)$.
Then for all $s<0, t\in\rr$ we have
\begin{equation}\label{eq:etDtheta}
e^{s{\sf D}}\theta_{n,t,z}(x)=\theta_{n,s+t,z}(x).
\end{equation}
In particular, $e^{t{\sf D}}\varphi_{n,z}(x)=\theta_{n,t,z}(x)$ and $e^{t{\sf D}}\theta_{n,-t,z}(x)=\varphi_{n,z}(x)$ for all $t<0$. As a consequence, $e^{t{\sf D}}\varphi_{n,z}$ is well defined for all $t\in\rr$ via the formula
\begin{equation}\label{eq:etD}
e^{t{\sf D}}\varphi_{n,z}(x)=e^{tn-\sinh(t)(xz+\cosh(t)z^2/2)}\varphi_{n,\cosh(t)z}(x),
\end{equation}
and it satisfies the semigroup property in the sense that $e^{(s+t){\sf D}}\varphi_{n,z}(x)=e^{s{\sf D}}e^{t{\sf D}}\varphi_{n,z}(x)$ for all $s,t\in\rr$.
\end{lem}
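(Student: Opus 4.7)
The plan is to first establish the main identity \eqref{eq:etDtheta} by a PDE argument, from which all the remaining claims will follow by specialization and a natural extension. Specifically, I will verify directly that, as a function of the parameter $\tau$, $\theta_{n,\tau,z}$ solves the evolution equation $\partial_\tau u={\sf D}u$ pointwise, and then invoke uniqueness of the Cauchy problem for the semigroup $\{e^{\sigma{\sf D}}\}_{\sigma\leq 0}$. This semigroup is bounded on $L^2(\rr)$ because ${\sf D}$ is non-negative and self-adjoint with spectrum $\{0,1,2,\dots\}$ and orthonormal eigenbasis $\{\varphi_n\}_{n\geq 0}$.

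The key step is the PDE verification, which is an exercise in the chain rule combined with the Hermite equation $\varphi_n''(y)=(y^2-2n-1)\varphi_n(y)$. Writing $E(\tau,x)=\tau n-\sinh(\tau)(xz+\cosh(\tau)z^2/2)$ so that $\theta_{n,\tau,z}(x)=e^{E}\varphi_n(x+\cosh(\tau)z)$, differentiating in $\tau$ and in $x$ expresses both $\partial_\tau\theta$ and ${\sf D}\theta=-\tfrac12(\partial_x^2-x^2+1)\theta$ as linear combinations of $\theta$ and $\partial_x\theta$, whose coefficients then match using only the identity $\cosh^2(\tau)-\sinh^2(\tau)=1$. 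Moreover, $\theta_{n,\tau,z}\in L^2(\rr)$ for every $\tau,z\in\rr$, since the Gaussian factor $e^{-(x+\cosh(\tau)z)^2/2}$ in $\varphi_n(x+\cosh(\tau)z)$ dominates the linear exponential $e^{-\sinh(\tau)xz}$ in $x$.

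With the PDE and $L^2$-integrability established, standard semigroup theory gives that for each fixed $t\in\rr$ the map $\sigma\mapsto e^{\sigma{\sf D}}\theta_{n,t,z}$ is the unique $L^2$-valued classical solution of $\partial_\sigma u={\sf D}u$ for $\sigma\leq 0$ with initial data $\theta_{n,t,z}$; but $\sigma\mapsto\theta_{n,\sigma+t,z}$ is another such solution, so the two agree, proving \eqref{eq:etDtheta}. The two ``in particular'' identities now follow by specializing \eqref{eq:etDtheta}: take the second argument of $\theta$ equal to $0$ (using $\theta_{n,0,z}=\varphi_{n,z}$) for the first, and equal to $-t$ for the second. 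Because the right-hand side of \eqref{eq:etD} is manifestly a smooth function of $t\in\rr$, it serves as the natural definition of $e^{t{\sf D}}\varphi_{n,z}$ for $t\geq 0$, compatibly with the bounded-semigroup definition for $t\leq 0$. Finally, the semigroup identity $e^{(s+t){\sf D}}\varphi_{n,z}=e^{s{\sf D}}e^{t{\sf D}}\varphi_{n,z}$ reduces to $\theta_{n,s+t,z}=e^{s{\sf D}}\theta_{n,t,z}$, which is \eqref{eq:etDtheta} for $s\leq 0$ and holds for $s>0$ by the same extension convention.

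I do not expect any conceptual obstacle here; the main technical content is the PDE computation, which is routine but must be carried out carefully, while the uniqueness and well-definedness arguments are standard for the harmonic oscillator.
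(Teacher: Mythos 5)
Your proposal is correct, but it proves \eqref{eq:etDtheta} by a genuinely different route than the paper. The paper computes $e^{s{\sf D}}\theta_{n,t,z}$ head-on: it inserts the explicit Mehler-type kernel of $e^{s{\sf D}}$ from \eqref{eq:etDkernel}, represents the shifted Hermite function by the contour integral \eqref{eq:integhermite}, evaluates the resulting Gaussian integral in $y$, and recognizes the answer as $\theta_{n,s+t,z}$ after the substitution $w\mapsto we^{-s}$. You instead verify that $\tau\mapsto\theta_{n,\tau,z}$ solves $\partial_\tau u={\sf D}u$ (your coefficient matching is right: both $\partial_\tau\theta$ and ${\sf D}\theta$ equal $\bigl[n-x\cosh(\tau)z-\tfrac12(\cosh^2(\tau)+\sinh^2(\tau))z^2\bigr]\theta+\sinh(\tau)z\,e^{E}\varphi_n'(x+\cosh(\tau)z)$, using $\varphi_n''(y)=(y^2-2n-1)\varphi_n(y)$) and then invoke uniqueness for the abstract Cauchy problem of the contraction semigroup generated by $-{\sf D}$. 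Your approach is conceptually cleaner and explains \emph{why} the formula holds, at the cost of importing the uniqueness theorem and of one small point you should make explicit: the pointwise PDE must be upgraded to an $L^2$-valued classical solution (continuous differentiability of $\tau\mapsto\theta_{n,\tau,z}$ in $L^2$ and membership in $D({\sf D})$), which is routine here since $\theta_{n,\tau,z}$ is Schwartz class with locally uniform Gaussian decay in $\tau$. The paper's computation, by contrast, needs no functional-analytic input beyond the kernel formula, and the same contour-integral technique is reused elsewhere in the paper (e.g., in the proof of Lemma \ref{lem:limLPsi}). Your treatment of the extension to $s>0$ and the semigroup property is terse (``by the same extension convention''), but the paper is equally terse there; the honest justification in both cases is that $e^{s{\sf D}}$ for $s>0$ is the inverse of $e^{-s{\sf D}}$ on its range, and \eqref{eq:etDtheta} exhibits $\theta_{n,t,z}=e^{-s{\sf D}}\theta_{n,s+t,z}$ as lying in that range.
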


\begin{proof}[Proof of Lemma \ref{lem:etDvarphi}]
The operator $e^{s{\sf D}}$ for $s<0$ is well defined and its integral kernel is given in \eqref{eq:etDkernel} (where we take $-\ell_1=\ell_2=s/2$):
\begin{equation}
e^{s{\sf D}}(x,y)=\frac{1}{\sqrt{\pi(e^{-s}-e^{s})}}\,e^{\frac12(y^2-x^2)-s/2-(e^{s/2}x-e^{-s/2}y)^2/(e^{-s}-e^{s})}.
\end{equation}
This formula together with the contour integral representation of the shifted Hermite function $\varphi_{n,\cosh(t)z}(x)$ (see \eqref{eq:integhermite}),
\begin{equation}
\varphi_{n,\cosh(t)z}(x)=(2^nn!\sqrt{\pi})^{-1/2}e^{-(x+\cosh(t)z)^2/2}\frac{n!}{2\pi\I}\oint dw \frac{e^{2w(x+\cosh(t)z)-w^2}}{w^{n+1}}
\end{equation}
(where the contour of integration encircles the origin), gives us
\begin{multline}
e^{s{\sf D}}\theta_{n,t,z}(x)=\bigl(2^nn!\sqrt{\pi}\bigr)^{-1/2}\frac{n!}{2\pi\I}
\int_{-\infty}^{\infty}dy\oint dw \frac{e^{tn-\sinh(2t)z^2/4}}{w^{n+1}\sqrt{\pi(e^{-s}-e^{s})}}\\
\times e^{\frac12(y^2-x^2)-s/2-(e^{s/2}x-e^{-s/2}y)^2/(e^{-s}-e^{s})-\sinh(t)yz-(y+\cosh(t)z)^2/2+2w(y+\cosh(t)z)-w^2}.
\end{multline}
We can compute the $y$ integral first, which is just a Gaussian integral, to obtain
\begin{equation}
(2^nn!\sqrt{\pi})^{-1/2}\frac{n!}{2\pi\I}e^{tn-\sinh(s+t)(xz+\cosh(s+t)z^2/2)-(x+\cosh(s+t)z)^2/2}\oint dw \frac{e^{2e^{s}w(x+\cosh(s+t)z)-e^{2s}w^2}}{w^{n+1}}.
\end{equation}
By changing $w\mapsto we^{-s}$, we see that the last integral is nothing but $\varphi_{n,\cosh(s+t)z}(x)$, which prove \eqref{eq:etDtheta}. The remaining statements in the lemma follow directly from this identity.
\end{proof}

\begin{proof}[Proof of Lemma \ref{lem:formulaK}]
Since $e^{t{\sf D}}$ and $\KGN$ commute, we have $e^{t{\sf D}}\KGN=e^{\frac12t{\sf D}}\KGN e^{\frac12t{\sf D}}$.
The formula now follows directly from the integral representation of $\KGN$ given in \eqref{eq:integrHermKer} and \eqref{eq:etD}.
\end{proof}

In order to estimate the tails $\ct^{\rm bb}_N$ it will be more convenient for us to work instead with
\begin{equation}
  \widehat\ct^{\rm dbm}_N=\argmax_{t\in\rr}\frac{\lambda_N(t)}{\cosh(t)}\qqand\widehat\cm^{\rm dbm}_N=\max_{t\in\rr}\frac{\lambda_N(t)}{\cosh(t)},
\end{equation}
where we recall that $\lambda_N(t)$ is the top line of the stationary GUE Dyson Brownian motion (see \eqref{eq:dbm-bb}).
More precisely, we will prove the following:

\begin{thm}\label{thm:hatTNtails}
  There are constants $c_1,c_2,c_3,n_0>0$ and $t_0>1/3$ such that
  \begin{equation}
    c_1\tts e^{-c_2Nt^3}\leq\pp\big(\widehat\ct_N^{\rm dbm}>t\big)\leq c_3\tts e^{-\frac43Nt^3+\mathcal{O}(N^{2/3})},
  \end{equation}
  with the upper bound holding uniformly in $N\in\nn$ and $t\in(0,1)$ satisfying $Nt^3\geq n_0$ and the lower bound holding uniformly in $N\in\nn$ and $t\in(0,t_0)$ satisfying $Nt^3\geq n_0$.
\end{thm}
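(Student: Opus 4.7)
The plan is to follow the template of \cite{quastelRemTails}, in which the analogous tail bounds were obtained for $\ct$ itself. Working with $\widehat\ct_N^{\rm dbm}$ rather than $\ct_N^{\rm bb}$ directly is convenient because Proposition \ref{prop:dbmcont} provides a clean Fredholm determinant formula for the probability that the stationary Dyson Brownian top line $\lambda_N(s)$ stays below a given curve on an interval $[\ell_1,\ell_2]$, which will be exactly what is needed on intervals of the form $[t,L]$. The fact that $\widehat\ct_N^{\rm dbm}$ and $\ct_N^{\rm bb}$ differ by a smooth change of variables $\ct_N^{\rm bb}=e^{2\widehat\ct_N^{\rm dbm}}/(1+e^{2\widehat\ct_N^{\rm dbm}})$ whose derivative at $0$ equals $1/2$ explains why the $\frac{32}{3}$ of Theorem \ref{thm:TNtails} becomes $\frac{4}{3}$ here.

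For the upper bound, note that on the event $\{\widehat\ct_N^{\rm dbm}>t\}$ the supremum of $\lambda_N(s)/\cosh(s)$ over $s>t$ is at least $\lambda_N(0)$; hence, for any threshold $a$,
\begin{equation*}
\pp\bigl(\widehat\ct_N^{\rm dbm}>t\bigr)\leq\pp(\lambda_N(0)<a)+\pp\Bigl(\max_{s>t}\lambda_N(s)/\cosh(s)\geq a\Bigr).
\end{equation*}
The first term will be controlled using the small deviation estimate for the largest GUE eigenvalue proved in Appendix \ref{sec:estimates}, the analog of the LOE estimate \eqref{eq:riderledoux} of Ledoux-Rider. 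For the second term I will use Proposition \ref{prop:dbmcont} with $g(s)=a\cosh(s)$ on $[t,L]$ and send $L\to\infty$; since $\Theta^{(a),{\rm bb}}_{[t,L]}$ factors as $\bar{\sf P}_{a\cosh(t)}(e^{-(L-t){\sf D}}-{\sf R}^{(a),{\rm bb}}_{[t,L]})\bar{\sf P}_{a\cosh(L)}$ by \eqref{eq:thetarRpre}, the inequality \eqref{eq:inebound} reduces the problem to bounding the trace norm of $e^{L{\sf D}}\KGN\,{\sf R}^{(a),{\rm bb}}_{[t,L]}\,e^{L{\sf D}}\KGN$ (the projections $\bar{\sf P}$ are dropped as in \cite[Lem.~2.3]{nibm-loe}). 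Using the integral representation of $e^{s{\sf D}}\KGN$ from Lemma \ref{lem:formulaK} together with the Plancherel-Rotach asymptotics of the Hermite functions near the soft edge, and choosing $a$ close to the typical value $\sqrt{2N}$, this trace norm should come out as $e^{-\frac{4}{3}Nt^{3}+\mathcal{O}(N^{2/3})}$; balancing the two terms by a small translation of $a$ then yields the claimed upper bound.

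For the lower bound, I will construct a concrete event implying $\widehat\ct_N^{\rm dbm}>t$ whose probability is at least $c_1e^{-c_2Nt^3}$. A natural candidate, in the spirit of \cite{quastelRemTails}, is
\begin{equation*}
E=\bigl\{\lambda_N(\tau)/\cosh(\tau)>a\text{ for some }\tau\in(t,2t)\bigr\}\cap\bigl\{\lambda_N(s)/\cosh(s)\leq a\ \forall\, s\leq 0\bigr\},
\end{equation*}
with $a$ chosen slightly above $\sqrt{2N}$. The first event has probability at least $ce^{-c'Nt^3}$ from the right-tail (large-deviation) behavior of $\lambda_N(\tau)$, and conditionally on it the complementary event has probability bounded below by a positive constant either by an FKG-type negative correlation argument or by a direct Fredholm determinant computation (taking the interval $[-L,0]$ and $g\equiv a$, and proving that the resulting determinant is bounded below uniformly in $N$). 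The restrictions $t<t_0$ and $Nt^3\geq n_0$ enter to ensure that these two events overlap nontrivially and that no error terms overwhelm the exponential.

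The main obstacle will be the sharp Fredholm determinant estimate in the upper bound: tracking the precise constant $\frac{4}{3}$ requires matching the Gaussian suppression $e^{-(a\cosh(s))^2/2}$ inherited from the Hermite functions against the integrand of ${\sf R}^{(a),{\rm bb}}_{[t,L]}$ at the saddle point. This is the finite-$N$ counterpart of the GUE right-tail constant governing $\pp(\ct>\tau)\sim e^{-\frac{4}{3}\tau^3}$ in the scaling limit, and extracting it from \eqref{eq:reflOperBB} via Lemma \ref{lem:formulaK} is the delicate step on which both bounds hinge.
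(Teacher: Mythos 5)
Your upper-bound decomposition is workable in outline, but two of its ingredients are misidentified. First, the estimate you invoke for $\pp(\lambda_N(0)<a)$ is a \emph{left-tail} small deviation bound for the largest GUE eigenvalue (of the form $\pp(\lambda_{{\rm GUE},N}\leq\sqrt{2N}(1-\delta))\leq c\ts e^{-c'N^2\delta^3}$, needed with $\delta\sim N^{-1/3}t$); Appendix \ref{sec:estimates} proves only the \emph{right-tail} bound for $\pp(\lambda_{{\rm GUE},N}\geq\sqrt{2N}(1+t))$, so it cannot be used here. The estimate you need does follow from \cite{ledouxRider}, but a cleaner route\dash---the one the paper takes\dash---is to decompose on $\{\widehat\cm_N^{\rm dbm}\leq\sqrt{2N}(1-N^{-1/3}\alpha t)\}$ rather than on $\{\lambda_N(0)<a\}$ and use Theorem \ref{thm:nibm-loe} to reduce to the LOE bound \eqref{eq:riderledoux}. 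For the complementary term the paper does not pass through a one-sided continuum statistics formula on $[t,\infty)$: it integrates the explicit joint density from Theorem \ref{thm:whargmax} over $\{s>t,\ m>\sqrt{2N}(1-N^{-1/3}\alpha t)\}$, estimating the rank-one perturbation $\|\widehat\Psi^{\rm bb}_{m,s}\|_1$ by Laplace's method (the exponent $2s-\sinh(2s)\leq-\tfrac43 s^3$ is what produces the constant $\tfrac43$) and controlling $e^{2\|\KGN\varrho_m\KGN\|_1}$ by Lemma \ref{lem:KrhoKbound}, which is precisely the source of the $\mathcal{O}(N^{2/3})$ error. Your alternative would require an independent trace-norm estimate for the half-line reflection kernel which you have not supplied, though it is plausibly within reach of Lemma \ref{lem:formulaK} and the Hermite asymptotics.

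The lower bound is where the genuine gaps lie. As written, your event $E$ does \emph{not} imply $\widehat\ct_N^{\rm dbm}>t$: you only constrain $\lambda_N(s)/\cosh(s)\leq a$ for $s\leq 0$, so the maximum may perfectly well be attained in $(0,t]$. The barrier constraint must run over all $s\leq t$, as in \eqref{eq:inelbound1}. More seriously, the decorrelation step is the technical heart of the argument, and neither of your suggested routes works as stated: with one increasing and one decreasing event, an FKG-type inequality gives $\pp(A\cap B)\leq\pp(A)\pp(B)$, i.e.\ $\pp(B\mid A)\leq\pp(B)$, which is the \emph{wrong direction} for lower-bounding $\pp(A\cap B)$. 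The paper instead writes the target as $\pp(B')-\pp(B'\cap A^c)$ with both $B'$ and $A^c$ decreasing, and proves the quantitative near-factorization $\pp(B'\cap A^c)\leq\pp(B')\ts\pp(A^c)(1+\text{small})$ by means of a new two-time Fredholm determinant formula (Proposition \ref{prop:22kernel}) combined with a Widom-type $2\times2$ block factorization and the operator-norm estimates of Lemmas \ref{lem:mainuppbound} and \ref{lem:4Rbounds}. None of this machinery appears in your sketch, and it cannot be replaced by a soft correlation inequality; this is the missing idea you would need to develop.
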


To recover Theorem \ref{thm:TNtails} from this result, observe first that, by symmetry and \eqref{eq:dbm-bb},
\[\pp\big(|\ct_N^{\rm bb}-\tfrac12|>\ep\big)=2\ts\pp\big(\ct_N^{\rm bb}>\tfrac12+\ep\big)
=2\ts\pp\!\left(\widehat\ct_N^{\rm dbm}>\tfrac12\log\!\big(\tfrac{1+2\ep}{1-2\ep}\big)\right).\]
Hence we may apply the above theorem to get
\begin{equation}
    c_1\tts e^{-c_2Nt(\ep)^3}\leq\pp\big(\big|\ct_N^{\rm bb}-\tfrac12\big|>\ep\big)\leq c_3\ts e^{-\frac43Nt(\ep)^3+\mathcal{O}(N^{2/3})}
\end{equation}
with $t(\ep)=\frac12\log(\frac{1+2\ep}{1-2\ep})$ for $N$ large enough so that $N\log(\frac{1+2\ep}{1-2\ep})^3\geq8n_0$ and provided that $\ep$ is small enough so that $t$ is in the right regime for each bound.
Since $t(\ep)\geq2\ep$ for $\ep\in[0,1)$ and $t(\ep)\in(0,1)$ for $\ep\in[0,\ep_2)$, the upper bound in Theorem \ref{thm:TNtails} follows for all $\ep\in[0,\ep_2)$.
Similarly, $\ep\in[0,\ep_1)$ ensures that $t(\ep)<1/3<t_0$, and we also have that $t(\ep)\leq c\ts\ep$ for some $c>0$ and $\ep$ in this range, so the lower bound follows in the same way.

The rest of this section is devoted to the proof of Theorem \ref{thm:hatTNtails}.
We will assume throughout that $t\in(0,1)$ and that $N\in\nn$ is large enough so that $Nt^3\geq n_0$ where $n_0>0$ is a large parameter which will be chosen in order to make all estimates work.
Throughout the proof we will make extensive use of Laplace's method for estimating integrals, see for instance \cite[Sec. 2.4]{erdelyi}.

\subsection{Upper bound}\label{sec:upper}

We start by writing, for any $t\in(0,1)$ and $N\in\nn$,
\begin{multline}\label{eq:iniUpperBound}
\pp\!\left(\widehat\ct^{\rm dbm}_N> t\right)\leq\pp\!\left(\widehat\ct^{\rm dbm}_N> t,\,\widehat\cm^{\rm dbm}_N> \sqrt{2N}(1-N^{-1/3}\alpha t)\right)\\
+\pp\!\left(\widehat\cm^{\rm dbm}_N\leq \sqrt{2N}(1-N^{-1/3}\alpha t)\right),
\end{multline}
where $\alpha>0$ is a parameter which will be chosen shortly.
By Theorem \ref{thm:nibm-loe}, the second probability on the right hand side equals $F_{{\rm LOE},N}\left(4N(1-N^{-1/3}\alpha t)^2\right)$.
By \cite[Thm. 2]{ledouxRider}, there are constants $c_1,c_2>0$ such that for any $\delta\in(0,1]$ we have 
\begin{equation}
F_{{\rm LOE},N}(4N(1-\delta))\leq c_1\ts e^{-c_2N^2\delta^3}.\label{eq:riderledoux}
\end{equation}
Choosing $\alpha$ large enough so that $c_2\alpha^3>\frac43$, and then $N$ large enough so that $\alpha t<N^{1/3}$, we get
\begin{equation}\label{eq:smalldevLOE}
F_{{\rm LOE},N}\Bigl(4N(1-N^{-1/3}\alpha t)^2\Bigr)\leq F_{{\rm LOE},N}\Bigl(4N(1-N^{-1/3}\alpha t)\Bigr)
\leq c_1 e^{-c_2\alpha^3 Nt^3}\leq c_1 e^{-\frac43Nt^3}
\end{equation}
as desired.
We are thus left with obtaining the same bound for the first probability on the right hand side of \eqref{eq:iniUpperBound}.

We express this last probability as 
\begin{equation}
\pp\!\left(\widehat\ct^{\rm dbm}_N> t,\widehat\cm^{\rm dbm}_N> \sqrt{2N}(1-N^{-1/3}\alpha t)\right)=\int_t^{\infty}ds\int_{\sqrt{2N}(1-N^{-1/3}\alpha t)}^{\infty}dm\,\widehat f_N^{\rm bb}(m,s).
\end{equation}
Using the second identity in Theorem \ref{thm:whargmax} 
together with \eqref{eq:inebound} we see that the last integral is bounded by
\begin{equation}\label{eq:Psibbbound}
\int_t^{\infty}ds\int_{\sqrt{2N}(1-N^{-1/3}\alpha t)}^{\infty}dm\,\|\widehat\Psi^{\rm bb}_{m,s}\|_1 e^{1+2\|\KGN \varrho_m \KGN\|_1+\|\widehat\Psi^{\rm bb}_{m,s}\|_1}.
\end{equation}
Thus we need to estimate the two trace norms appearing above.
We will estimate first $\|\widehat\Psi^{\rm bb}_{m,s}\|_1$.
Since $\widehat\Psi^{\rm bb}_{m,s}$ is rank one, this norm can be written as
\begin{equation}\label{eq:rank1Psibb}
\|\widehat\Psi^{\rm bb}_{m,s}\|_1=\Bigl\|\sum_{n=0}^{N-1}\varphi_n(\cdot)\widehat\psi^{\rm bb}_{m,s}(n)\Bigr\|_2\Bigl\|\sum_{n=0}^{N-1}\varphi_n(\cdot)\widehat\psi^{\rm bb}_{m,-s}(n)\Bigr\|_2,
\end{equation}
where we have used \eqref{eq:whPsibb}.
We have, for all $s\in\rr$,
\begin{align}
&\Bigl\|\sum_{n=0}^{N-1}\varphi_n(\cdot)\widehat\psi^{\rm bb}_{m,s}(n)\Bigr\|^2_2=\int_{-\infty}^{\infty}dx\sum_{n,k=0}^{N-1}\varphi_n(x)\varphi_k(x)\widehat\psi^{\rm bb}_{m,s}(n)\widehat\psi^{\rm bb}_{m,s}(k)=\sum_{n=0}^{N-1}\widehat\psi^{\rm bb}_{m,s}(n)^2\\
&\quad=2\cosh(s)\sum_{n=0}^{N-1}\Bigl[e^{-2ns}\varphi_n'(m\cosh(s))^2+2m\sinh(s)e^{-2ns}\varphi_n'(m\cosh(s))\varphi_n(m\cosh(s))\\
&\hspace{3.1in}+m^2\sinh(s)^2e^{-2ns}\varphi_n(m\cosh(s))^2\Bigr]\\
&\quad=2\cosh(s)\left.\Big[\p_x\p_y+m\sinh(s)(\p_x+\p_y)+m^2\sinh(s)^2\Big]e^{-2s{\sf D}}\KGN(x,y)\right|_{x=y=m\cosh(s)}\\
&\quad=2\sqrt{2N}\cosh(s)e^{-s(2N-1)-\sinh(2s)m^2/2}\int_{0}^{\infty}dz\,e^{\sinh(2s)(m+z)^2/2}H_N\bigl(s,(m+z)\cosh(s)\bigr)
\end{align}
with
\begin{multline}\label{eq:deffuncH}
  H_N(s,x):=x^2\tanh(s)^2\varphi_N(x)\varphi_{N-1}(x)\\
  +x\tanh(s)\bigl(\varphi_N'(x)\varphi_{N-1}(x)+\varphi_N(x)\varphi_{N-1}'(x)\bigr)+\varphi_N'(x)\varphi_{N-1}'(x).
\end{multline}
where we have used the orthogonality of the family $(\varphi_n)_{n\in\nn}$, the definition of $\widehat\psi^{\rm bb}_{m,s}$ in \eqref{eq:whpsibb} and Lemma \ref{lem:formulaK}.
Using this identity for both norms on the right hand side of \eqref{eq:rank1Psibb} (note that the exponential factors in front of each integral cancel) we obtain
\begin{multline}
\|\widehat\Psi^{\rm bb}_{m,s}\|_1=2\sqrt{2N}\cosh(s)\left[\int_{0}^{\infty}dz\,e^{\sinh(2s)(m+z)^2/2}H_N\bigl(s,(m+z)\cosh(s)\bigr)\right]^{1/2}\\
\times\left[\int_{0}^{\infty}dz\,e^{-\sinh(2s)(m+z)^2/2}H_N\bigl(-s,(m+z)\cosh(s)\bigr)\right]^{1/2}.
\end{multline}
We now focus on the integral $\int_{\sqrt{2N}(1-N^{-1/3}\alpha t)}^{\infty}dm\,\|\widehat\Psi^{\rm bb}_{m,s}\|_1$. By using Cauchy-Schwarz inequality, we first have
\begin{align}
\int_{\sqrt{2N}(1-N^{-1/3}\alpha t)}^{\infty}&dm\,\|\widehat\Psi^{\rm bb}_{m,s}\|_1\leq 2\sqrt{2N}\cosh(s)\\
&\times\left[\int_0^{\infty}dm\left|\int_{0}^{\infty}dz\,e^{\sinh(2s)(m+z)^2/2}H_N\bigl(s,(m+z)\cosh(s)\bigr)\right|\right]^{1/2}\\
&\times\left[\int_0^{\infty}dm\left|\int_{0}^{\infty}dz\,e^{-\sinh(2s)(m+z)^2/2}H_N\bigl(-s,(m+z)\cosh(s)\bigr)\right|\right]^{1/2},
\end{align}
then performing the change of variables $m\longmapsto\sqrt{2N}\wt{m}$ with $\wt{m}:=1-N^{-1/3}\alpha t+m$ and $z\longmapsto \sqrt{2N}z$ to write the right hand side above as
\begin{multline}\label{eq:intnormPsi}
2(2N)^{3/2}\cosh(s)\left[\int_0^{\infty}dm\left|\int_{0}^{\infty}dz\,e^{N\sinh(2s)(\wt{m}+z)^2}H_N\bigl(s,\sqrt{2N}(\wt{m}+z)\cosh(s)\bigr)\right|\right]^{1/2}\\
\times\left[\int_0^{\infty}dm\left|\int_{0}^{\infty}dz\,e^{-N\sinh(2s)(\wt{m}+z)^2}H_N\bigl(-s,\sqrt{2N}(\wt{m}+z)\cosh(s)\bigr)\right|\right]^{1/2}.
\end{multline}
At this stage we need the following asymptotic approximations of the Hermite functions $\varphi_N(\sqrt{2N}x)$ and their derivatives $\varphi_N'(\sqrt{2N}x)$ when $x\in(1,\infty)$ (see \cite[Sec. 4]{Skov1959}):
\begin{equation}\label{eq:asympHermite}
\begin{aligned}
\varphi_N(\sqrt{2N}x)&=\frac{1}{\sqrt{2\pi}(2N)^{1/4}(x^2-1)^{1/4}}e^{-Nh(x)+1/2\log(x+\sqrt{x^2-1})}\left[1+\mathcal{O}\!\left(\tfrac{1}{N(x-1)^{3/2}}\right)\right],\\
\varphi_N'(\sqrt{2N}x)&=-\frac{(2N)^{1/4}(x^2-1)^{1/4}}{\sqrt{2\pi}}e^{-Nh(x)+1/2\log(x+\sqrt{x^2-1})}\left[1+\mathcal{O}\!\left(\tfrac{1}{N(x-1)^{3/2}}\right)\right],
\end{aligned}
\end{equation}
where $h(x)=x\sqrt{x^2-1}-\log(x+\sqrt{x^2-1})>0$ for $x>1$ and the error terms are uniform in $x\in(1,\infty)$ as $N^{2/3}(x-1)\to\infty$.
The same asymptotics hold for $\varphi_{N-1}(\sqrt{2N}x)$ and $\varphi_{N-1}'(\sqrt{2N}x)$.
Now observe that, since $\cosh(s)\geq 1+s^2/2$ and $\tilde m=1-N^{-1/3}\alpha t+m$, we have, for $m,z\geq 0$, $s\geq t$ with $t\in(0,1)$,
\begin{equation}
(\wt{m}+z)\cosh(s)
\geq 1+t^2\bigl(\tfrac{1}{2}-\tfrac{\alpha}{tN^{1/3}}-\tfrac{\alpha}{2N^{1/3}}\bigr)
\geq 1+\tfrac{n_0^{2/3}}{N^{2/3}}\Bigl(\tfrac{1}{2}-\tfrac{3\alpha}{2n_0^{1/3}}\Bigr),
\end{equation}
where the last bound follows from $N\geq Nt^3\geq n_0$.
Choosing $n_0$ large enough the right hand side is larger than 1, and thus (in view of \eqref{eq:deffuncH}) we may use the Hermite function asymptotics in \eqref{eq:intnormPsi} to get
\begin{multline}
H_N\bigl(s,\sqrt{2N}(\wt{m}+z)\cosh(s)\bigr)\leq c_1\sqrt{N}f_s(\wt{m}+z)\\
\times e^{-2N\left(\cosh(s)(\wt{m}+z)\sqrt{\cosh(s)^2(\wt{m}+z)^2-1}-\log\bigl(\cosh(s)(\wt{m}+z)+\sqrt{\cosh(s)^2(\wt{m}+z)^2-1}\bigr)\right)}
\end{multline}
for some $c_1>0$, where
\begin{equation}
f_s(x)=\frac{\sinh(s)^2x^2}{(\cosh(s)^2x^2-1)^{1/2}}+\sinh(s)x+(\cosh(s)^2x^2-1)^{1/2}.
\end{equation}
Applying now the general identity $\int_0^{\infty}dx\int_0^{\infty}dy\,g(x+y)=\int_0^{\infty}dx\,xg(x)$ (observing that $\tilde m+z$ is a function of $m+z$) together with the above upper bound we obtain
\begin{multline}
\int_0^{\infty}dm\int_{0}^{\infty}dz\,e^{N\sinh(2s)(\wt{m}+z)^2}H_N\bigl(s,\sqrt{2N}(\wt{m}+z)\cosh(s)\bigr)\\
\leq c_1\sqrt{N}\int_0^{\infty}dz\,zf_s(1-N^{-1/3}\alpha t+z)e^{-Ng_s(1-N^{-1/3}\alpha t+z)},\label{eq:bd11}
\end{multline}
where
\begin{equation}
g_s(z)=-\sinh(2s)z^2+2\cosh(s)z\sqrt{\cosh(s)^2z^2-1}-2\log\bigl(\cosh(s)z+\sqrt{\cosh(s)^2z^2-1}\bigr).
\end{equation}
Since $g_s'(z)=-4\cosh(s)(\sinh(s)z-\sqrt{\cosh(s)^2z^2-1})$, $g_s'(1)=0$ and $g_s''(1)=4\tanh(s)^{-1}>0$, $g_s(z)$ attains its minimum for $z\geq0$ at $z=1$ with $g_s(1)=-2s$, and thus it follows from a simple application of Laplace's method that the major contribution to the $z$ integral comes from the neighborhood of the point $z=N^{-1/3}\alpha t$ and hence the right hand side of \eqref{eq:bd11} is bounded by
\begin{equation}
\frac{\alpha t\sinh(s)^{3/2}}{N^{1/3}\cosh(s)^{1/2}}e^{2Ns}\left(c_1+\mathcal{O}\!\left[\tfrac{1}{t^3N^{3/2}}\right]\right),
\end{equation}
where the error term is uniform in $s\in(t,\infty)$. Hence when $Nt^3\geq n_0$, we have
\begin{equation}
\int_0^{\infty}dm\int_{0}^{\infty}dz\,e^{N\sinh(2s)(\wt{m}+z)^2}H_N\bigl(s,\sqrt{2N}(\wt{m}+z)\cosh(s)\bigr)
\leq c_1\frac{\alpha t\sinh(s)^{3/2}}{N^{1/3}\cosh(s)^{1/2}}e^{2Ns}.
\end{equation}
This yields a bound for the first integral in \eqref{eq:intnormPsi}.
The second integral can be estimated in the same way, leading to 
\begin{equation}
\int_0^{\infty}dm\int_{0}^{\infty}dz\,e^{-N\sinh(2s)(\wt{m}+z)^2}H_N\bigl(-s,\sqrt{2N}(\wt{m}+z)\cosh(s)\bigr)
\leq c_1\frac{\alpha t\sinh(s)^{3/2}}{N^{1/3}\cosh(s)^{1/2}}e^{2N(s-\sinh(2s))}.
\end{equation}
We deduce that
\begin{align}
\int_t^{\infty}ds\int_{\sqrt{2N}(1-N^{-1/3}\alpha t)}^{\infty}dm\,\|\widehat\Psi^{\rm bb}_{m,s}\|_1&\leq c_1 N^{7/6}\tts t\int_t^{\infty}ds\sinh(s)^{3/2}\cosh(s)^{1/2}e^{(2s-\sinh(2s))N}\\
&\leq c_1 N^{1/6}e^{2t}e^{N\left(2t-\sinh(2t)\right)},
\end{align}
where the last estimate can be obtained using Laplace's method and the Taylor series expansion of the hyperbolic sine and cosine.
This also implies that $\|\widehat\Psi^{\rm bb}_{m,s}\|_1$ is bounded above by a constant uniformly for $s\geq t$ and $m\geq \sqrt{2N}(1-N^{-1/3}\alpha t)$.
Since $2t-\sinh(2t)\leq-\frac43t^3$ for $t\geq0$, we deduce then from \eqref{eq:Psibbbound} and Lemma \ref{lem:KrhoKbound} below that, for $Nt^3$ large enough, that
\begin{equation}
\pp\!\left(\widehat\ct^{\rm dbm}_N> t,\widehat\cm^{\rm dbm}_N> \sqrt{2N}(1-N^{-1/3}\alpha t)\right)\leq c_1e^{-\frac{4}{3}Nt^3+\mathcal{O}(N^{2/3})}.
\end{equation}
This estimate together \eqref{eq:smalldevLOE} yield the desired upper bound.

\begin{lem}\label{lem:KrhoKbound}
There are constants $c,n_0>0$ such that for all $N\geq n_0$, $t\in(0,1)$ and $m\geq \sqrt{2N}(1-t/N^{1/3})$,
\begin{equation}
\|\KGN \varrho_m \KGN\|_1\leq c\ts N^{2/3}.
\end{equation}
\end{lem}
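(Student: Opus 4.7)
The plan is to combine the elementary inequality $\|A\|_1\leq\sqrt{\mathrm{rank}(A)}\,\|A\|_2$ for finite-rank operators with a two-block decomposition of $\KGN$. Set $n_*:=\lceil 4tN^{2/3}\rceil$, let $P^{(2)}:=\sum_{n=N-n_*}^{N-1}\varphi_n\otimes\varphi_n$ be the projection onto the top $n_*$ Hermite modes, and let $P^{(1)}:=\KGN-P^{(2)}$. Expanding $\KGN\varrho_m\KGN=\sum_{i,j\in\{1,2\}}P^{(i)}\varrho_m P^{(j)}$ gives four summands, and I will show that each one contributes at most $O(N^{2/3})$ to the trace norm.

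For each of the three ``edge'' pieces $P^{(i)}\varrho_m P^{(j)}$ with $\{i,j\}\cap\{2\}\neq\emptyset$, the rank is at most $\mathrm{rank}(P^{(2)})=n_*$ (since $\varrho_m$ is invertible). Denoting $c_{n,k}:=\langle\varphi_n,\varrho_m\varphi_k\rangle=\int\varphi_n(z)\varphi_k(2m-z)\,dz$, the unitarity of $\varrho_m$ yields $\sum_{k\geq 0}c_{n,k}^2=\|\varrho_m\varphi_n\|^2=1$ for each $n$, so by orthonormality of the Hermite basis
\[
\|P^{(i)}\varrho_m P^{(j)}\|_2^2=\sum_{n\in I_i,\,k\in I_j}c_{n,k}^2\leq|I_2|=n_*,
\]
and therefore $\|P^{(i)}\varrho_m P^{(j)}\|_1\leq\sqrt{n_*}\cdot\sqrt{n_*}=n_*\leq 4tN^{2/3}$.

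The main work is the bulk piece $P^{(1)}\varrho_m P^{(1)}$, for which I claim $\|P^{(1)}\varrho_m P^{(1)}\|_2^2\leq C$ uniformly in $N$ (provided $n_0$ is chosen large enough); combined with the bound $\mathrm{rank}(P^{(1)}\varrho_m P^{(1)})\leq N$ this yields $\|P^{(1)}\varrho_m P^{(1)}\|_1\leq\sqrt{CN}\leq C N^{2/3}$. The starting point is the closed-form expression
\[
|c_{n,k}|^2=\frac{p!}{q!}(2m^2)^{q-p}\,e^{-2m^2}\,L_p^{q-p}(2m^2)^2,\qquad p:=\min(n,k),\ q:=\max(n,k),
\]
obtained by computing the two Hermite contour integrals in $c_{n,k}$, integrating out $z$ as a Gaussian, and identifying the remaining single contour integral as a Laguerre polynomial; equivalently, $c_{n,k}=\tfrac{\sqrt{n!/k!}}{2^{(n-k)/2}}e^{-m^2}\tfrac{1}{2\pi\I}\oint\tfrac{e^{2ms}(m-s)^k}{s^{n+1}}\,ds$. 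For $n,k<N-n_*$ and $2m^2\geq 4N(1-t/N^{1/3})^2$, the argument $2m^2$ lies beyond the Laguerre right edge $2(p+q)+O(1)$ by a margin $\gtrsim tN^{2/3}$, and a steepest-descent analysis at the real saddle $w^*=\tfrac12(1-\sqrt{(\epsilon_n+\epsilon_k)/2})$ with $\epsilon_n:=(N-n)/N$, $\epsilon_k:=(N-k)/N$ (equivalently, classical Plancherel--Rotach asymptotics beyond the edge) gives an Airy-tail bound schematically of the form
\[
|c_{n,k}|^2\lesssim N^{-2/3}\,\exp\!\bigl(-c\,((N-n)+(N-k))^{3/2}/\sqrt{N}\bigr).
\]
Summing over $n,k<N-n_*$ and rescaling to the Airy variables $s:=(N-n)/N^{1/3}$, $u:=(N-k)/N^{1/3}$ (so $s,u\geq 4n_0^{1/3}$), the $N^{-2/3}$ prefactor cancels the Jacobian $N^{2/3}$ of the change of variables, and the resulting integral $\int\!\!\int_{s,u\geq 4n_0^{1/3}}e^{-c(s+u)^{3/2}}\,ds\,du$ is bounded by $Ce^{-c'\sqrt{n_0}}$, which is $\leq 1$ for $n_0$ large.

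The principal obstacle is establishing the Airy-tail bound on $|c_{n,k}|$ uniformly in the regime $n,k<N-n_*$, $m\geq\sqrt{2N}(1-t/N^{1/3})$, $N\geq n_0$, $t\in(0,1)$. While the required asymptotics are classical (see e.g.\ Szeg\H o's monograph), careful book-keeping of the saddle-point exponent $N\phi(w^*)-m^2\sim-\tfrac{4}{3}N\eta^3$ (with $\eta=\sqrt{(\epsilon_n+\epsilon_k)/2}$) and of the Stirling prefactors is needed to obtain the bound with the claimed uniformity. Combining the edge and bulk estimates and using $t\leq 1$ together with $\sqrt{N}\leq N^{2/3}$, we conclude $\|\KGN\varrho_m\KGN\|_1\leq 3n_*+C\sqrt{N}\leq c\,N^{2/3}$.
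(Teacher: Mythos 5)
Your outer skeleton is sound and genuinely different from the paper's: the edge/bulk splitting of the spectral projection, the bound $\|A\|_1\leq\sqrt{\operatorname{rank}(A)}\,\|A\|_2$, and the observation that $\sum_k\langle\varphi_n,\varrho_m\varphi_k\rangle^2=1$ (so the three pieces touching $P^{(2)}$ contribute at most $3n_*\lesssim tN^{2/3}$ in trace norm) are all correct, and the choice $n_*\sim tN^{2/3}$ is exactly calibrated so that for $n,k<N-n_*$ the point $2m$ sits in the classically forbidden region for $\varphi_n(\cdot)\varphi_k(2m-\cdot)$ by a margin proportional to $((N-n)+(N-k))/\sqrt{N}$. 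But the proof has a genuine gap at its center, which you yourself flag as the ``principal obstacle'': the uniform two-parameter Airy-tail bound $|c_{n,k}|^2\lesssim N^{-2/3}\exp\bigl(-c((N-n)+(N-k))^{3/2}/\sqrt N\bigr)$ is asserted, not established. This is not a routine citation: the estimate must hold uniformly as $(n,k)$ sweeps from just inside the edge cutoff all the way to the deep bulk, i.e.\ through the full range of Plancherel--Rotach regimes of the Laguerre polynomial $L_p^{q-p}(2m^2)$ with the parameter ratio $(q-p)/p$ varying from $0$ to order $1$, and simultaneously uniformly in $m\geq\sqrt{2N}(1-t/N^{1/3})$ and $t\in(0,1)$ (so the margin past the edge can be as small as $O(1)$ in Airy units). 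Without carrying out that saddle-point analysis with explicit error control, the bulk bound $\|P^{(1)}\varrho_m P^{(1)}\|_2^2\leq C$ --- and hence the whole lemma --- remains unproved. As written, the argument establishes only the $O(tN^{2/3})$ edge contribution.

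It is worth noting how the paper sidesteps this entirely. It inserts an exponential tilt $e^{\xi a}$ ($(e^{\xi a}f)(x)=e^{ax}f(x)$) and uses $\|\KGN\varrho_m\KGN\|_1\leq\|\KGN e^{\xi a}\|_2\,\|e^{-\xi a}\varrho_m\KGN\|_2$; by orthonormality both factors collapse to the \emph{single} sum $\sum_{n=0}^{N-1}\int_\rr dx\,e^{2ax}\varphi_n(x)^2$ (times $e^{-4am}$ for the second), which is controlled by the elementary bound $\sum_{n<N}\varphi_n(x)^2\leq\sqrt{N/2}$ for $x\leq\sqrt{2N}$ and a one-dimensional Hermite tail bound beyond $\sqrt{2N}$; optimizing $a=N^{-1/6}$ gives $cN^{2/3}$. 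That route requires no joint $(n,k)$ asymptotics and no Laguerre analysis. If you want to salvage your approach, you must either supply the uniform bound on $c_{n,k}$ in full, or replace the bulk step by a similarly ``diagonalizing'' trick; as it stands, the hard part of the lemma has been deferred rather than proved.
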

\begin{proof}[Proof of Lemma \ref{lem:KrhoKbound}]
For any $a>0$ define the multiplication operator $(e^{\xi a}f)(x)=e^{ax}f(x)$ and write
\begin{equation}
\|\KGN \varrho_m \KGN\|_1\leq\|\KGN e^{\xi a}\|_2\|e^{-\xi a}\varrho_m \KGN\|_2.
\end{equation}
We have
\begin{equation}
\|\KGN e^{\xi a}\|_2^2=\int_{\rr^2}dx\,dy\left(\sum_{n=0}^{N-1}\varphi_n(x)\varphi_n(y)e^{ya}\right)^2=\sum_{n=0}^{N-1}\int_{\rr}dy\,e^{2ya}\varphi_n(y)^2,
\end{equation}
and
\begin{equation}
\|e^{-\xi a}\varrho_m \KGN\|_2^2=\sum_{n=0}^{N-1}\int_{\rr}dx\,e^{-2xa}\varphi_n(2m-x)^2=e^{-4am}\sum_{n=0}^{N-1}\int_{\rr}dx\,e^{2xa}\varphi_n(x)^2,
\end{equation}
(in both cases we have used the orthogonality of the family $(\varphi_n)_{n\in\nn}$), which yields the bound
\begin{equation}
\|\KGN \varrho_m \KGN\|_1\leq e^{-2am}\sum_{n=0}^{N-1}\int_{\rr}dx\,e^{2xa}\varphi_n(x)^2.
\end{equation}
We split the $x$ integral into two regions, $({-\infty},{\sqrt{2N}}]$ and $({\sqrt{2N}},{\infty})$. On the first one we use the upper bound $\sum_{n=0}^{N-1}\varphi_n(x)^2\leq\sqrt{N/2}$ for $x\in\rr$ (see \cite{Levin1992}) to estimate the integral by
\begin{equation}\label{eq:intxregion1}
\sum_{n=0}^{N-1}\int_{-\infty}^{\sqrt{2N}}dx\,e^{2xa}\varphi_n(x)^2\leq\frac{\sqrt{N}}{2\sqrt{2}a}e^{2\sqrt{2N}a}.
\end{equation}
On the second one we use \eqref{eq:integrHermKer} (with the change of variable $x\mapsto\sqrt{2N}(1+x)$) and the bound 
\begin{equation}
n^{1/12}\varphi_n\bigl(\sqrt{2n}+x/(\sqrt{2}n^{1/6})\bigr)\leq c_1\ts e^{-c_2x^{3/2}}\quad\forall x>0,n\in\nn
\end{equation}
for the Hermite function (see \cite{Aubrun2005}) to write
\begin{align}\label{eq:bd12}
\sum_{n=0}^{N-1}\int_{\sqrt{2N}}^{\infty}dx\,e^{2xa}\varphi_n(x)^2&=\int_0^{\infty}dx\int_0^{\infty}dz\,(2N)^{3/2}e^{2\sqrt{2N}a(1+x)}\\
&\hspace{0.3in}\times\varphi_{N}\bigl(\sqrt{2N}(1+x+z)\bigr)\varphi_{N-1}\bigl(\sqrt{2N}(1+x+z)\bigr)\\
&\leq c_1 N^{4/3}e^{2\sqrt{2N}a}\int_0^{\infty}dx\int_0^{\infty}dz\,e^{2\sqrt{2N}ax-c_2(x+z)^{3/2}N}\\
&\leq c_1 N^{4/3}e^{2\sqrt{2N}a}\int_0^{\infty}dx\,e^{2\sqrt{2N}ax-c_2x^{3/2}N}\int_0^{\infty}dz\,e^{-c_2z^{3/2}N}.
\end{align}
The $z$ integral can be computed explicitly, and equals $c\ts N^{-2/3}$ for some $c>0$, while we can bound the $x$ integral by $c_1e^{c_2a^3/\sqrt{N}}$ using Laplace's method and hence the right hand side of \eqref{eq:bd12} can be bounded by $c_1N^{2/3}e^{2\sqrt{2N}a+c_2a^3/\sqrt{N}}$ for large enough $N$.
Putting this bound together with \eqref{eq:intxregion1} gives
\begin{equation}
\|\KGN \varrho_m \KGN\|_1\leq e^{-2am}\left(\frac{\sqrt{N}}{2\sqrt{2}a}e^{2\sqrt{2N}a}+c_1N^{2/3}e^{2\sqrt{2N}a+c_2a^3/\sqrt{N}}\right).
\end{equation}
Since $m\geq \sqrt{2N}(1-t/N^{1/3})$ and $t\in(0,1)$ deduce that
\begin{equation}
\|\KGN \varrho_m \KGN\|_1\leq e^{2\sqrt{2N}\frac{a}{N^{1/3}}}\left(\frac{\sqrt{N}}{2\sqrt{2}a}+ c_1N^{2/3}e^{c_2a^3/\sqrt{N}}\right),
\end{equation}
which finishes the proof by choosing $a=N^{-1/6}$.
\end{proof}

\subsection{Lower bound}\label{sec:lower}

Proceeding analogously to the proof of the upper bound in \cite{quastelRemTails}, we start by writing, for $N\in\nn$, $t\in(0,1)$, and two parameters $\beta>0$ and $s\in(0,1)$ to be chosen later on,
\begin{equation}\label{eq:inelbound1}
\pp\!\left(\widehat\ct^{\rm dbm}_N> t\right)\geq\pp\!\left(\tfrac{\lambda_N(x)}{\cosh(x)}\leq \sqrt{2N}\!\cosh(\beta t)\ \forall x\leq t;\ \tfrac{\lambda_N(t+s)}{\cosh(t+s)}> \sqrt{2N}\!\cosh(\beta t)\right).
\end{equation}
The basic idea of the proof in \cite{quastelRemTails} is the following.
Suppose for a moment that the two events in the probability on the right hand side were independent.
The first event has a probability which can be bounded away from zero (see below), so the lower bound is controlled by $\pp\bigl(\tfrac{\lambda_N(t+s)}{\cosh(t+s)}> \sqrt{2N}\!\cosh(\beta t)\bigr)$. 
This last probability has the desired tail decay if we choose $s=\alpha t$ for some $\alpha\in(0,1)$.
The proof thus boil down to estimating the correction coming from the correlation between the two events.
To do this, we rewrite \eqref{eq:inelbound1} as
\begin{multline}\label{eq:inelbound2}
\pp\!\left(\widehat\ct^{\rm dbm}_N> t\right)\geq\pp\!\left(\tfrac{\lambda_N(x)}{\cosh(x)}\leq \sqrt{2N}\!\cosh(\beta t)\ \forall x\leq t\right)\\
-\pp\!\left(\tfrac{\lambda_N(x)}{\cosh(x)}\leq \sqrt{2N}\!\cosh(\beta t)\ \forall x\leq t;\ \tfrac{\lambda_N(t+s)}{\cosh(t+s)}\leq \sqrt{2N}\!\cosh(\beta t)\right).
\end{multline}
The correlation between the two events in the last probability is controled by the following estimate:

\begin{lem}\label{lem:mainuppbound}
Let $\beta>3$. There are $\alpha_0,n_0>0$ such that if $\alpha\in(0,\alpha_0)$ and $s=\alpha t$, then for all $N\in\nn$, $t\in(0,1)$ satisfying $Nt^3>n_0$, we have
\begin{multline}\label{eq:mainuppbd}
\pp\!\left(\tfrac{\lambda_N(x)}{\cosh(x)}\leq\sqrt{2N}\!\cosh(\beta t)\ \forall x\leq t;\ \tfrac{\lambda_N(t+s)}{\cosh(t+s)}\leq \sqrt{2N}\!\cosh(\beta t)\right)\\
\leq \pp\!\left(\tfrac{\lambda_N(x)}{\cosh(x)}\leq \sqrt{2N}\!\cosh(\beta t)\ \forall x\leq t\right)\pp\!\left(\tfrac{\lambda_N(t+s)}{\cosh(t+s)}\leq \sqrt{2N}\!\cosh(\beta t)\right)\\
\times\left[1+\tfrac{a_1}{2Nt^3}e^{-\frac{4}{3}N\left((\beta^2+(1+\alpha)^2)^{3/2}t^3+\mathcal{O}(t^{5})\right)}\right],
\end{multline}
where $a_1$ is defined implicitly in \eqref{eq:ubfiniteGUE}.
\end{lem}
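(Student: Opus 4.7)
My plan is to compare both sides via the continuum statistics formula (Proposition \ref{prop:dbmcont}), express the joint event as a perturbation of the single-event Fredholm determinant, and then control the resulting correction by a direct trace-norm estimate. Writing $M=\sqrt{2N}\cosh(\beta t)$ and $M'=M\cosh(t+s)$, the event $A$ corresponds to $\lambda_N(x)\leq M\cosh(x)$ for $x\leq t$, while $A\cap B$ adds the single endpoint constraint $\lambda_N(t+s)\leq M'$. Applying Proposition \ref{prop:dbmcont} on $[-L,t+s]$ with a boundary function that equals $M\cosh(x)$ on $[-L,t]$, diverges on $(t,t+s)$, and equals $M'$ at $t+s$ (rigorous via $H^1$ approximation), and letting $L\to\infty$, one writes $\pp(A\cap B)=\det(I-\mathbf{K}_A-\Delta_s)$ and $\pp(A)=\det(I-\mathbf{K}_A)$, where $\Delta_s$ is the extra operator carrying only the information of the constraint at $t+s$ (essentially $e^{s\sf D}$ propagation composed with projection onto $(-\infty,M']$).

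The multiplicativity of the Fredholm determinant gives
\begin{equation}
\det(I-\mathbf{K}_A-\Delta_s)=\det(I-\mathbf{K}_A)\cdot\det\bigl(I-(I-\mathbf{K}_A)^{-1}\Delta_s\bigr),
\end{equation}
so the whole lemma reduces to
\begin{equation}
\det\bigl(I-(I-\mathbf{K}_A)^{-1}\Delta_s\bigr)\leq \pp(B)\bigl[1+\tfrac{a_1}{2Nt^3}e^{-\frac{4}{3}N(\beta^2+(1+\alpha)^2)^{3/2}t^3+\mathcal{O}(Nt^5)}\bigr].
\end{equation}
The heuristic is simple: if past and future of the top line were exactly independent, $(I-\mathbf{K}_A)^{-1}$ would act like the identity on the range of $\Delta_s$, making the right-hand side equal to $\pp(B)$ exactly. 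The proof quantifies the deviation from that ideal case.

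To that end, I would expand the determinant as $\det(I-(I-\mathbf{K}_A)^{-1}\Delta_s)\leq 1-\tr((I-\mathbf{K}_A)^{-1}\Delta_s)+\mathcal{O}(\|(I-\mathbf{K}_A)^{-1}\Delta_s\|_1^2)$, and rewrite
\begin{equation}
\tr\bigl((I-\mathbf{K}_A)^{-1}\Delta_s\bigr)=\tr(\Delta_s)+\tr\bigl(\bigl[(I-\mathbf{K}_A)^{-1}-I\bigr]\Delta_s\bigr).
\end{equation}
The first piece evaluates to exactly $\pp(B^c)$ (the marginal tail of the stationary DBM top eigenvalue at one fixed time), while the second piece is the correlation correction. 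Using \eqref{eq:inenorms}, Lemma \ref{lem:formulaK}, and the Hermite-function asymptotics \eqref{eq:asympHermite} — exactly as in the upper-bound argument of Section \ref{sec:upper} — one obtains trace-norm bounds on $\Delta_s$ and on the resolvent $(I-\mathbf{K}_A)^{-1}$ that are uniform in $L$. Together with the small-deviation estimate \eqref{eq:ubfiniteGUE} for the GUE edge, applied to $\sqrt{2N}\cosh(\beta t)\cosh(t+s)$ and using the Taylor expansion $\cosh(\beta t)\cosh(t+s)-1=\tfrac12(\beta^2+(1+\alpha)^2)t^2+\mathcal{O}(t^4)$, this produces exactly the exponential rate $e^{-\frac{4}{3}N(\beta^2+(1+\alpha)^2)^{3/2}t^3+\mathcal{O}(Nt^5)}$ and the polynomial prefactor $a_1/(2Nt^3)$ appearing in the lemma.

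The main obstacle is controlling the resolvent uniformly as $L\to\infty$. Because the interval on which the past constraint is imposed is semi-infinite, naive bounds on $\|(I-\mathbf{K}_A)^{-1}\|_{\rm op}$ could diverge; and the smallness of the correction beyond the trivial $\tr(\Delta_s)=\pp(B^c)$ term requires showing that the action of the extra factor $(I-\mathbf{K}_A)^{-1}-I$ contributes only at the same exponential scale. The technical ingredient needed here is a Hilbert–Schmidt bound on $\mathbf{K}_A$ that decays in the regime $x,y\gtrsim\sqrt{2N}$, which follows by splitting the $\rr$-integration at $\sqrt{2N}$ and using the sub-Gaussian pointwise decay of $\varphi_N$ at the edge, in the style of the bound from Lemma \ref{lem:KrhoKbound} and inequality \eqref{eq:bd12}. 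Choosing $\beta>3$ and $\alpha_0$ small enough guarantees that $M'$ stays comfortably beyond $\sqrt{2N}$, so all these estimates close and yield the claimed bound.
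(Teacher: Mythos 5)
Your overall skeleton (write the joint event as a Fredholm determinant, factor out $\det(I-\mathbf{K}_A)=\pp(A)$, and bound the remaining factor) is reasonable, and the paper does something structurally similar but via the $2\times 2$ block formula of Proposition \ref{prop:22kernel}: there the determinant is split, in the style of Widom, into a block-diagonal factor whose determinant is \emph{exactly} $\pp(A)\ts\pp(B)$ times an off-diagonal factor $\det({\sf I}-\wt{\sf K})$ with $\wt{\sf K}={\sf R}_{1,1}{\sf R}_{1,2}{\sf R}_{2,2}{\sf R}_{2,1}$. However, as written your argument has a genuine gap at its central step. First, the identity $\tr(\Delta_s)=\pp(B^c)$ is false on two counts: the operator $\Delta_s=\KGN({\sf I}-{\sf Q})e^{-s{\sf D}}{\sf P}_{b\cosh(t+s)}e^{s{\sf D}}\KGN$ coming from \eqref{eq:22kernel1} carries the past-constraint operator ${\sf Q}$ inside it, so its trace is not $\tr({\sf P}_2\KGN{\sf P}_2)$; and even that trace is not $\pp(B^c)=1-\det({\sf I}-{\sf P}_2\KGN{\sf P}_2)$ but only its first-order approximation. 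So your decomposition does not cleanly separate "$\pp(B^c)$ plus a correlation correction", and you would still have to re-derive the exact factorization into $\pp(A)\pp(B)$ that the block formula provides for free.

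Second, and more importantly, you never establish the quantitative fact on which the lemma actually rests: the allowed correction $\tfrac{a_1}{2Nt^3}e^{-\frac43N((\beta^2+(1+\alpha)^2)^{3/2}t^3+\mathcal{O}(t^5))}$ is exactly \emph{half} of the lower bound for $\pp(B^c)$ coming from Lemma \ref{lem:tailGUEN}, so it is of the \emph{same} order as the main term $\pp(B^c)$, not smaller. One must therefore show that the correlation term decays strictly faster than $\pp(B^c)$. In the paper this is the content of Lemma \ref{lem:4Rbounds} combined with the computation of $h_\beta(\alpha)=\alpha+2\alpha^2+\frac23\alpha^3+\alpha\beta^2+\frac23(1+\beta^2)^{3/2}-\frac23(\beta^2+(1+\alpha)^2)^{3/2}$ and the verification that $h_\beta(0)=0$ and $h_\beta'(0)=1+\beta^2-2\sqrt{1+\beta^2}>0$, whence $\|\wt{\sf K}\|_1$ carries an extra factor $e^{-Nh_\beta(\alpha)t^3}$ that beats the worse polynomial prefactor. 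This is precisely where the hypotheses $\beta>3$ and $\alpha\in(0,\alpha_0)$ enter, and your proposal neither uses these hypotheses nor explains why the cross term is small enough; gesturing at "the same Hermite asymptotics as in Section \ref{sec:upper}" does not substitute for this computation, since a bound of the same exponential order as $\pp(B^c)$ but with the wrong constant would make the estimate useless for the lower-bound argument that follows.
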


To see how the lower bound follows from the lemma, we let $\beta>3$, choose $\alpha$ as in the lemma, let $s=\alpha t$ and then use the estimate and \eqref{eq:inelbound2} to get
\begin{multline}
\pp\!\left(\widehat\ct^{\rm dbm}_N> t\right)\geq\pp\!\left(\tfrac{\lambda_N(x)}{\cosh(x)}\leq \sqrt{2N}\!\cosh(\beta t)\ \forall x\leq t\right)\\
\times\left[1-\pp\bigl(\tfrac{\lambda_N(t+s)}{\cosh(t+s)}\leq \sqrt{2N}\!\cosh(\beta t)\bigr)\Bigl(1+\tfrac{a_1}{2Nt^3}e^{-\frac{4}{3}N\left((\beta^2+(1+\alpha)^2)^{3/2}t^3+\mathcal{O}(t^{5})\right)}\Bigr)\right].
\end{multline}
For the first probability on the right hand side we have that there is a $p_0>0$
 such that
 \begin{equation}
\pp\!\left(\tfrac{\lambda_N(x)}{\cosh(x)}\leq \sqrt{2N}\!\cosh(\beta t)\ \forall x\leq t\right)\geq\pp\!\left(\tfrac{\lambda_N(x)}{\cosh(x)}\leq \sqrt{2N}\ \forall x\in\rr\right)=F_{{\rm LOE},N}(4N)\geq p_0
\end{equation}
uniformly in $N$.
On the other hand, since $\cosh(\beta t)\cosh(t+s)=1+\frac{\beta^2+(1+\alpha)^2}{2}t^2+\mathcal{O}(t^4)$ for $t\in(0,1)$ and since $\lambda_N(t)$ has the distribution $F_{{\rm GUE},N}$ of the largest eigenvalue of a $N\times N$ GUE random matrix (with scaling chosen as in \cite{nibm-loe}), Lemma \ref{lem:tailGUEN} implies that
\begin{align}
\pp\!\left(\tfrac{\lambda_N(t+s)}{\cosh(t+s)}\leq \sqrt{2N}\!\cosh(\beta t)\right)&=F_{{\rm GUE},N}(\sqrt{2N}\!\cosh(\beta t)\cosh(t+s))\\
&\leq 1-\tfrac{a_1}{Nt^3}e^{-\frac{4}{3}N\left((\beta^2+(1+\alpha)^2)^{3/2}t^3+\mathcal{O}(t^{5})\right)}.\label{eq:ubfiniteGUE}
\end{align}
We deduce that
\begin{align}
\pp\!\left(\widehat\ct^{\rm dbm}_N> t\right)&\geq p_0\!\left[\tfrac{a_1}{2Nt^3}\ts e^{-\frac{4}{3}N\left((\beta^2+(1+\alpha)^2)^{3/2}t^3+\mathcal{O}(t^{5})\right)}+\tfrac{a_1^2}{2N^2t^6}\ts e^{-\frac{8}{3}N\left((\beta^2+(1+\alpha)^2)^{3/2}t^3+\mathcal{O}(t^{5})\right)}\right]\\
&\geq\tfrac{c_1}{Nt^3}e^{-\frac{4}{3}N\left((\beta^2+(1+\alpha)^2)^{3/2}t^3+\mathcal{O}(t^{5})\right)},
\end{align}
which yields the lower bound.

Our goal then is prove Lemma \ref{lem:mainuppbound}. For this we need an expression for the probability of the form $\pp\left(\lambda_N(x)\leq a\cosh(x)\ \forall x\leq t\right)$. To state the extension of that formula, define, for $a,t\in\rr$, the operator (acting on $L^2(\rr)$)
\begin{equation}\label{eq:notaQM}
{\sf Q}={\sf P}_{a\cosh(t)}({\sf I}+{\sf M}_{a,t}\varrho_{a,t}),
\end{equation}
where
\begin{equation}
\varrho_{a,t}f(x)=f(2a\cosh(t)-x)\quad\text{and}\quad{\sf M}_{a,t}f(x)=e^{2a\sinh(t)(x-a\cosh(t))}f(x).
\end{equation}

\begin{prop}\label{prop:22kernel}
With the above definitions, and for any $a,b\in\rr$ and $s>0$,
\begin{align}
\pp&\bigl(\lambda_N(x)\leq a\cosh(x)\ \forall x\leq t;\ \lambda_N(t+s)\leq b\cosh(t+s)\bigr)\\
&=\det\!\left({\sf I}-\KGN+\KGN({\sf I}-{\sf Q})e^{-s{\sf D}}\left({\sf I}-{\sf P}_{b\cosh(t+s)}\right)e^{s{\sf D}}\KGN\right)_{L^2(\rr)}\label{eq:22kernel1}\\
&=\det\!\left({\sf I}-{\sf\Gamma}\left[\begin{matrix}
	{\sf Q}\KGN{\sf P}_{a\cosh(t)} & {\sf Q}e^{-s{\sf D}}(\KGN-{\sf I}){\sf P}_{b\cosh(t+s)}\\
  {\sf P}_{b\cosh(t+s)}e^{s{\sf D}}\KGN {\sf P}_{a\cosh(t)} & {\sf P}_{b\cosh(t+s)}\KGN{\sf P}_{b\cosh(t+s)}
	\end{matrix}\right]{\sf\Gamma}^{-1}\right)_{\!\!L^2(\rr)^2}\label{eq:22kernel2}
\end{align}
where
\begin{equation}\label{eq:notaG}
{\sf\Gamma}=\left[\begin{matrix}
	{\sf G} & 0\\
	0 & {\sf G}
	\end{matrix}\right]\quad\text{with}\quad {\sf G}f(x)=e^{-2a\sinh(t)x}f(x).
\end{equation}
\end{prop}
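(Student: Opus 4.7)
The approach is to extend the continuum statistics method of Proposition \ref{prop:dbmcont} to a joint event that combines a continuum constraint on $(-\infty, t]$ with a single-point constraint at time $t+s$. I would first truncate the time domain to $[-L, t+s]$ for large $L$, and apply the general path-integral formula of \cite[Thm 3.3]{bcr} (as used to derive Proposition \ref{prop:dbmcont}) over a discretization of $[-L, t]$ with barrier $a\cosh(x)$, augmented by a single extra time $t+s$ with barrier $b\cosh(t+s)$. Taking the continuum limit on $[-L, t]$ while keeping $t+s$ as a single extra time then yields, after cyclic rearrangement,
\[
\pp(\cdot) = \lim_{L\to\infty}\det\!\left(I - \KGN + e^{L{\sf D}}\KGN\, \Theta^{(a),{\rm bb}}_{[-L,t]}\, e^{-s{\sf D}}\bar{\sf P}_{b\cosh(t+s)}\, e^{s{\sf D}}\, e^{L{\sf D}}\KGN\right),
\]
where the middle factor $e^{-s{\sf D}}\bar{\sf P}_{b\cosh(t+s)}e^{s{\sf D}}$ implements the free propagator from time $t$ to $t+s$ composed with the single-point projection at $t+s$.

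Next I would compute the $L\to\infty$ limit following the scheme of Section \ref{sec:bm_halfspace}. Substituting \eqref{eq:thetarRpre} with $\ell_1=-L$, $\ell_2=t$, the operator $\Theta^{(a),{\rm bb}}_{[-L,t]}$ splits into a free-propagator part and a reflection part, both flanked by the projections $\bar{\sf P}_{a\cosh(L)}$ and $\bar{\sf P}_{a\cosh(t)}$. The outer projection $\bar{\sf P}_{a\cosh(L)}$ may be dropped in trace norm as $L\to\infty$ by the same reasoning as \cite[Lem 2.3]{nibm-loe}. The free-propagator contribution then simplifies via $e^{L{\sf D}}\KGN\, e^{-(t+L){\sf D}} = e^{-t{\sf D}}\KGN$ and cyclic manipulations to yield $\KGN\bar{\sf P}_{a\cosh(t)}\, e^{-s{\sf D}}\bar{\sf P}_{b\cosh(t+s)}\, e^{s{\sf D}}\KGN$. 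For the reflection contribution, I would substitute the explicit Gaussian kernel \eqref{eq:reflOperBB} and convolve with $e^{L{\sf D}}\KGN$ on the left using the contour-integral representation of the Hermite polynomials (as in the proof of Lemma \ref{lem:limLPsi} and of Lemma \ref{lem:formulaK}). After the $L$-dependent Gaussian factors cancel, the limit becomes $\KGN\, {\sf P}_{a\cosh(t)} {\sf M}_{a,t}\varrho_{a,t}\, e^{-s{\sf D}}\bar{\sf P}_{b\cosh(t+s)}\, e^{s{\sf D}}\KGN$. Subtracting and using $\bar{\sf P}_{a\cosh(t)} - {\sf P}_{a\cosh(t)}{\sf M}_{a,t}\varrho_{a,t} = I - {\sf Q}$ together with $\bar{\sf P}_{b\cosh(t+s)} = I - {\sf P}_{b\cosh(t+s)}$ yields \eqref{eq:22kernel1}.

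To derive \eqref{eq:22kernel2} from \eqref{eq:22kernel1} I would expand the operator inside the determinant using the projection property $\KGN \KGN = \KGN$ to isolate the terms proportional to ${\sf P}_{a\cosh(t)}$ and ${\sf P}_{b\cosh(t+s)}$. The resulting operator has range supported on the sum of the ranges of these two projections, so the standard identity $\det(I-AB) = \det(I-BA)$ together with an appropriate block factorization rewrites the single-space determinant as a $2\times 2$ block determinant on $L^2(\rr)^2$. The similarity transform by ${\sf\Gamma}$, i.e.\ multiplication by the scalar exponential $e^{-2a\sinh(t)x}$, preserves the Fredholm determinant and symmetrizes the exponential weight ${\sf M}_{a,t}$ inside ${\sf Q}$: after conjugation this weight is absorbed into the off-diagonal block containing $e^{-s{\sf D}}(\KGN-I){\sf P}_{b\cosh(t+s)}$, and the $(1,1)$ block becomes ${\sf Q}\KGN{\sf P}_{a\cosh(t)}$ as stated.

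The main obstacle will be the reflection computation: identifying the $L\to\infty$ limit of $e^{L{\sf D}}\KGN\, {\sf R}^{(a),{\rm bb}}_{[-L,t]}$ on the \emph{asymmetric} interval $[-L,t]$ with the explicit operator $\KGN{\sf P}_{a\cosh(t)}{\sf M}_{a,t}\varrho_{a,t}$. Unlike the symmetric case $[-L,L]$ treated in \cite[Lem 2.4]{nibm-loe}, where all $L$-dependent Gaussian factors cancel with no residual weight, the asymmetry here produces the extra exponential factor ${\sf M}_{a,t}$, which must be extracted carefully from the contour-integral computation by matching with the cross-terms in \eqref{eq:reflOperBB} evaluated at $\ell_1=-L$, $\ell_2=t$.
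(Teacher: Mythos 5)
Your proposal is correct and follows essentially the same route as the paper: adapt the continuum statistics formula of \cite{bcr} to the mixed continuum/single-time event, identify $\lim_{L\to\infty}e^{(L+t){\sf D}}\KGN\Theta^{(a),{\rm bb}}_{[-L,t]}=\KGN({\sf I}-{\sf M}_{a,t}\varrho_{a,t})\bar{\sf P}_{a\cosh(t)}=\KGN({\sf I}-{\sf Q})$ via the free-propagator/reflection decomposition (the paper reduces exactly the asymmetric-interval computation you flag as the main obstacle to Lemmas 2.3 and 2.4 of \cite{nibm-loe}), and pass to the $2\times2$ block form by the standard $\det({\sf I}-AB)=\det({\sf I}-BA)$ manipulation, which the paper likewise only cites from Prop.~3.3 of \cite{quastelRemTails}. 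The one slip is propagator bookkeeping in your displayed formula: the factors flanking $\Theta^{(a),{\rm bb}}_{[-L,t]}e^{-s{\sf D}}\bar{\sf P}_{b\cosh(t+s)}$ after the cyclic rearrangement should be $e^{(L+t){\sf D}}\KGN$ on the left and $e^{s{\sf D}}\KGN$ on the right, so that they recombine to $e^{(L+t+s){\sf D}}\KGN$, rather than $e^{L{\sf D}}\KGN$ and $e^{s{\sf D}}e^{L{\sf D}}\KGN$.
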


\begin{proof}[Proof of Proposition \ref{prop:22kernel}]
We will only prove \eqref{eq:22kernel1}.
The proof of \eqref{eq:22kernel2} follows from the same argument as that in the proof of \cite[Prop. 3.3]{quastelRemTails}, and is basically a version of the argument in \cite{bcr} (see also \cite{prolhacSpohn,quastelRemAiry1}).

Given $L>0$, it is straightforward to adapt the proof given in \cite[Cor. 4.5]{bcr} of the continuum statistics formula \eqref{eq:dbmcont} to deduce that
\begin{multline}\label{eq:prove22kernel1}
\pp\bigl(\lambda_N(x)\leq a\cosh(x)\ \forall x\in[-L,t];\ \lambda_N(t+s)\leq b\cosh(t+s)\bigr)\\
=\det\!\left({\sf I}-\KGN+\Theta_{[-L,t]}^{(a),{\rm bb}}e^{-s{\sf D}}\bar{\sf P}_{b\cosh(t+s)}e^{(L+t+s){\sf D}}\KGN\right),
\end{multline}
where $\Theta_{[-L,t]}^{(a),{\rm bb}}$ is defined as $\Theta_{[-L,t]}^{g,{\rm bb}}$ (see \eqref{eq:thetarRpre}) for $g=a\cosh(t)$. Since $e^{(t+s){\sf D}}\KGN=e^{t{\sf D}}\KGN e^{s{\sf D}}\KGN$ for all $t,s\in\rr$, we can use the cyclic property of the determinant to turn the last determinant into
\begin{equation}\label{eq:prove22kernel2}
\det\!\left({\sf I}-\KGN+e^{(L+t){\sf D}}\KGN\Theta_{[-L,t]}^{(a),{\rm bb}}e^{-s{\sf D}}\bar{\sf P}_{b\cosh(t+s)}e^{s{\sf D}}\KGN\right).
\end{equation}
We will show below that
\begin{equation}\label{eq:limitThetaa}
e^{(L+t){\sf D}}\KGN\Theta_{[-L,t]}^{(a),{\rm bb}}\xrightarrow[L\to\infty]{}\KGN({\sf I}-{\sf M}_{a,t}\varrho_{a,t})\bar{\sf P}_{a\cosh(t)}
\end{equation}
in trace norm. This together with \eqref{eq:prove22kernel1} and \eqref{eq:prove22kernel2} yields that the probability in the Proposition equals
\begin{equation}
\det\!\left({\sf I}-\KGN+\KGN({\sf I}-{\sf M}_{a,t}\varrho_{a,t})\bar{\sf P}_{a\cosh(t)}e^{-s{\sf D}}\bar{\sf P}_{b\cosh(t+s)}e^{s{\sf D}}\KGN\right).
\end{equation}
Now formula \eqref{eq:22kernel1} readily follows by observing that ${\sf M}_{a,t}$ and ${\sf P}_{a\cosh(t)}$ commute and $\varrho_{a,t}\bar{\sf P}_{a\cosh(t)}={\sf P}_{a\cosh(t)}\varrho_{a,t}$.

All that remains is to prove \eqref{eq:limitThetaa}. The proof follows from the same arguments as that in the proofs of \cite[Lem. 2.3 and Lem. 2.4]{nibm-loe} (in which case we were taking $t=0$). We decompose $\Theta_{[-L,t]}^{(a),{\rm bb}}$ as
\begin{equation}
\Theta_{[-L,t]}^{(a),{\rm bb}}=\Big[e^{-(t+L){\sf D}}-{\sf R}^{(a),{\rm bb}}_{[-L,t]}\Big]\bar {\sf P}_{a\ttsm\cosh(t)}-\Omega_{t,L}^{(a)},
\end{equation}
where $\Omega_{t,L}^{(a)}={\sf P}_{a\ttsm\cosh(L)}\Big[e^{-(t+L){\sf D}}-{\sf R}^{(a),{\rm bb}}_{[-L,t]}\Big]\bar {\sf P}_{a\ttsm\cosh(t)}$. The first term leads to 
\[e^{(L+t){\sf D}}\KGN\big[e^{-(t+L){\sf D}}-{\sf R}^{(a),{\rm bb}}_{[-L,t]}\big]=\KGN({\sf I}-{\sf M}_{a,t}\varrho_{a,t})\]
(see \cite[Lem. 2.4]{nibm-loe}) while the remaining term $e^{(L+t){\sf D}}\KGN\Omega_{t,L}^{(a)}$ converges to $0$ in trace norm (see \cite[Appx. B]{nibm-loe}).
\end{proof}

\begin{proof}[Proof of Lemma \ref{lem:mainuppbound}]
We start by using \eqref{eq:22kernel2} with $a=b=\sqrt{2N}\cosh(\beta t)$.
To simplify notation, write ${\sf P}_1={\sf P}_{a\cosh(t)},{\sf P}_2={\sf P}_{a\cosh(t+s)}$.
The idea of the proof (which comes from \cite{widomAiry2}) is to factor out the two diagonal terms in the determinant and then estimate the remainder.
More precisely, we write
\begin{multline}
{\sf I}-{\sf\Gamma}\left[\begin{matrix}
	{\sf Q}\KGN{\sf P}_1 & {\sf Q}e^{-s{\sf D}}(\KGN-{\sf I}){\sf P}_2\\
  {\sf P}_2e^{s{\sf D}}\KGN{\sf P}_1 & {\sf P}_2\KGN{\sf P}_2
	\end{matrix}\right]{\sf\Gamma}^{-1}=\left({\sf I}-{\sf\Gamma}\left[\begin{matrix}
	{\sf Q}\KGN{\sf P}_1 & 0\\
  0 & {\sf P}_2\KGN{\sf P}_2
	\end{matrix}\right]{\sf\Gamma}^{-1}\right)\\
\times\left({\sf I}-{\sf\Gamma}\left[\begin{matrix}
	0 & ({\sf I}-{\sf Q}\KGN{\sf P}_1)^{-1}{\sf Q}e^{-s{\sf D}}(\KGN-{\sf I}){\sf P}_2\\
  ({\sf I}-{\sf P}_2\KGN{\sf P}_2)^{-1}{\sf P}_2e^{s{\sf D}}\KGN{\sf P}_1 & 0
	\end{matrix}\right]{\sf\Gamma}^{-1}\right).
\end{multline}
The determinant of the first factor on the right hand side equals
\begin{equation}
\det\!\left({\sf I}-{\sf G}{\sf Q}\KGN{\sf P}_1{\sf G}^{-1}\right)\det\!\left({\sf I}-{\sf G}{\sf P}_2\KGN{\sf P}_2{\sf G}^{-1}\right).
\end{equation}
The second determinant equals $F_{{\rm GUE},N}(a\cosh(t+s))=\pp\!\left(\lambda_N(t+s)\leq a\cosh(t+s)\right)$.
For the first one we have, by the cyclic property of determinants and the facts that ${\sf P}_1{\sf Q}={\sf Q}$ and $\KGN=(\KGN)^2$,
\begin{equation}
\det\!\left({\sf I}-{\sf G}{\sf Q}\KGN{\sf P}_1{\sf G}^{-1}\right)=\det\!\left({\sf I}-\KGN{\sf Q}\KGN\right)=\pp\!\left(\lambda_N(x)\leq a\cosh(x)\ \forall x\leq t\right)
\end{equation}
by \eqref{eq:22kernel1} where we take $s=0$ and $b=a$.
This yields the first two factors on the right hand side of \eqref{eq:mainuppbd}.

We are left with estimating
\begin{multline}
\det\!\left({\sf I}-{\sf\Gamma}\left[\begin{matrix}
	0 & ({\sf I}-{\sf Q}\KGN{\sf P}_1)^{-1}{\sf Q}e^{-s{\sf D}}(\KGN-{\sf I}){\sf P}_2\\
  ({\sf I}-{\sf P}_2\KGN{\sf P}_2)^{-1}{\sf P}_2e^{s{\sf D}}\KGN{\sf P}_1 & 0
	\end{matrix}\right]{\sf\Gamma}^{-1}\right)_{L^2(\rr)^2}\\
  =\det({\sf I}-\wt{\sf K}),
\end{multline}
with $\wt{\sf K}={\sf R}_{1,1}{\sf R}_{1,2}{\sf R}_{2,2}{\sf R}_{2,1}$ and
\begin{equation}\label{eq:4R}
\begin{aligned}
{\sf R}_{1,1}&={\sf G}({\sf I}-{\sf Q}\KGN{\sf P}_1)^{-1}{\sf G}^{-1},&{\sf R}_{1,2}&={\sf G}{\sf Q}e^{-s{\sf D}}(\KGN-{\sf I}){\sf P}_2,\\
{\sf R}_{2,2}&=({\sf I}-{\sf P}_2\KGN{\sf P}_2)^{-1},&{\sf R}_{2,1}&={\sf P}_2e^{s{\sf D}}\KGN{\sf P}_1{\sf G}^{-1}.
\end{aligned}
\end{equation}
Since $|\!\det({\sf I}-\wt{\sf K})-\det({\sf I})|\leq\|\wt{\sf K}\|_1 e^{1+\|\wt{\sf K}\|_1}$, the proof will be complete once we show that, for $Nt^3$ large enough,
\begin{equation}\label{eq:Rupneeded}
\|\wt{\sf K}\|_1\leq\frac{a_1}{2e^2Nt^3}e^{-\frac{4}{3}N\left((\beta^2+(\alpha+1)^2)^{3/2}t^3+\mathcal{O}(t^{5})\right)}.
\end{equation}
To get this estimate write (see \eqref{eq:inenorms})
$\|\wt{\sf K}\|_1\leq \|{\sf R}_{1,1}\|_2\|{\sf R}_{1,2}\|_2\|{\sf R}_{2,2}\|_2\|{\sf R}_{2,1}\|_2$,
and then use Lemma \ref{lem:4Rbounds}, which gives
\begin{equation}
\|\wt{\sf K}\|_1\leq c_1N^{-5/4}t^{-15/4}e^{-N\left(\frac{4}{3}(\beta^2+(\alpha+1)^2)^{3/2}t^3+h_{\beta}(\alpha)t^3+\mathcal{O}(t^5)\right)},
\end{equation}
where $h_{\beta}(\alpha)=\alpha+2\alpha^2+\frac{2}{3}\alpha^3+\alpha\beta^2+\frac{2}{3}(1+\beta^2)^{3/2}-\frac{2}{3}(\beta^2+(1+\alpha)^2)^{3/2})$.
Since, for fixed $\beta>3$, we have $h_{\beta}(0)=0$ and $h_{\beta}'(0)=1+\beta^2-2\sqrt{1+\beta^2}>0$, we deduce that $h_{\beta}(\alpha)>0$ for small enough $\alpha$ and therefore that \eqref{eq:Rupneeded} holds for small enough $\alpha$ and large enough $Nt^3$ as desired.
\end{proof}

\begin{lem}\label{lem:4Rbounds}
Let ${\sf R}_{1,1}$, ${\sf R}_{1,2}$, ${\sf R}_{2,2}$ and ${\sf R}_{2,1}$ be defined as in \eqref{eq:4R}. There are constants $c_1, n_0>0$ and a constant $t_0>1/3$ such that if $0<t<t_0$, $Nt^3\geq n_0$ and $\beta\geq 3$,
\begin{subequations}
\begin{align}
\|{\sf R}_{1,1}\|_2&\leq 2,\label{eq:4R1}\\ 
\|{\sf R}_{1,2}\|_2&\leq c_1N^{-1/4}t^{-3/4}e^{-N\left((4+\alpha)t+\left((4+\alpha)\beta^2+2\alpha^3/3+2\alpha^2+\alpha+8/3\right)t^3+\mathcal{O}(t^5)\right)},\label{eq:4R2}\\ 
\|{\sf R}_{2,2}\|_2&\leq 2,\label{eq:4R3}\\ 
\|{\sf R}_{2,1}\|_2&\leq c_1N^{-1}t^{-3}e^{-N\left((-\alpha-4)t+\left(2(\beta^2+(1+\alpha)^2)^{3/2}/3+2(1+\beta^2)^{3/2}/3-4\beta^2-8/3\right)t^3+\mathcal{O}(t^5)\right)}.\qquad\label{eq:4R4}
\end{align}
\end{subequations}
\end{lem}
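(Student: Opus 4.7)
The plan is to treat the four estimates in two groups. The bounds \eqref{eq:4R1} and \eqref{eq:4R3} on the resolvent-type operators ${\sf R}_{1,1}$ and ${\sf R}_{2,2}$ are to be proved by showing that the operator inside each inverse has operator norm at most $\tfrac{1}{2}$ for $Nt^3\geq n_0$, so that the Neumann series converges and yields the bound $2$ (here $\|\cdot\|_2$ is to be interpreted as operator norm, and that is the norm in which these two bounds are ultimately combined with the HS bounds \eqref{eq:4R2} and \eqref{eq:4R4} through $\|ABCD\|_1\leq\|A\|_{\rm op}\|B\|_2\|C\|_{\rm op}\|D\|_2$). The estimates \eqref{eq:4R2} and \eqref{eq:4R4} on ${\sf R}_{1,2}$ and ${\sf R}_{2,1}$ are genuine Hilbert--Schmidt bounds, to be obtained by writing out the kernels via Lemma \ref{lem:formulaK} and evaluating the resulting multiple integrals by substituting the Plancherel--Rotach asymptotic \eqref{eq:asympHermite} and applying Laplace's method.

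\textbf{The resolvent bounds.}
For \eqref{eq:4R3}, since ${\sf P}_2$ and $\KGN$ are self-adjoint projections,
\[\|{\sf P}_2\KGN{\sf P}_2\|_{\rm op}=\|\KGN{\sf P}_2\|_{\rm op}^2\leq\|\KGN{\sf P}_2\|_2^2=\int_{a\cosh(t+s)}^{\infty}\KGN(y,y)\,dy;\]
the integrand is controlled via \eqref{eq:integrHermKer} and \eqref{eq:asympHermite} (both of which apply since $a\cosh(t+s)/\sqrt{2N}\geq\cosh(\beta t)\geq 1+\tfrac12\beta^2 t^2$ and $Nt^3\geq n_0$), yielding a bound $\mathcal{O}(e^{-\frac{4}{3}N(1+\beta^2)^{3/2}t^3})$, hence $\leq 1/2$ for $Nt^3$ large enough and $\|{\sf R}_{2,2}\|_{\rm op}\leq 2$. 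For \eqref{eq:4R1}, write ${\sf R}_{1,1}=({\sf I}-{\sf G}{\sf Q}\KGN{\sf P}_1{\sf G}^{-1})^{-1}$ and decompose ${\sf Q}={\sf P}_1+{\sf P}_1{\sf M}_{a,t}\varrho_{a,t}$. The reflection piece of ${\sf G}{\sf Q}\KGN{\sf P}_1{\sf G}^{-1}$ has an explicit kernel in which the three exponential factors from ${\sf G}^{-1}$, ${\sf M}_{a,t}$, and ${\sf G}$ combine into $e^{-a^2\sinh(2t)+2a\sinh(t)y}$ times $\KGN(2a\cosh(t)-x,y)$, cut by $\{x,y>a\cosh(t)\}$. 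Both this piece and the simpler direct piece ($e^{-2a\sinh(t)(x-y)}\KGN(x,y)$ on the same cut) have HS norms that are exponentially small in $Nt^3$ by the same type of Hermite tail estimates as above, giving $\|{\sf G}{\sf Q}\KGN{\sf P}_1{\sf G}^{-1}\|_{\rm op}\leq 1/2$ and hence $\|{\sf R}_{1,1}\|_{\rm op}\leq 2$.

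\textbf{The HS bounds.}
For \eqref{eq:4R4}, the kernel is
\[{\sf R}_{2,1}(x,y)=\uno{x>a\cosh(t+s)}\uno{y>a\cosh(t)}\,e^{2a\sinh(t)y}\,e^{s{\sf D}}\KGN(x,y),\]
and Lemma \ref{lem:formulaK} expresses $e^{s{\sf D}}\KGN$ as a $z$-integral of a sum of Hermite products at arguments $x+c(s/2)z$ and $y+c(s/2)z$. Squaring gives $\|{\sf R}_{2,1}\|_2^2$ as a four-fold integral in $(x,y,z_1,z_2)$ whose Hermite arguments all exceed $\sqrt{2N}$; substituting \eqref{eq:asympHermite} uniformly recasts the integrand as $(\text{polynomial})\cdot e^{-N\Phi(x,y,z_1,z_2,t)}$, and the change of variables $x=\sqrt{2N}(\cosh(\beta t)\cosh(t+s)+\tilde x)$, $y=\sqrt{2N}(\cosh(\beta t)\cosh(t)+\tilde y)$ followed by Laplace's method at $(\tilde x,\tilde y,z_1,z_2)=0$ yields \eqref{eq:4R4}, the $N^{-1}t^{-3}$ prefactor coming from the Gaussian widths in the four directions of integration. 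The argument for \eqref{eq:4R2} is parallel: one uses $e^{-s{\sf D}}(\KGN-{\sf I})=-\sum_{n\geq N}e^{-sn}\varphi_n\otimes\varphi_n$, whose $n=N$ term dominates geometrically in $n$, and treats the two components of ${\sf Q}$ separately (the reflected one being subdominant since its Hermite arguments are shifted further from the edge).

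\textbf{Main obstacle.}
Exponential decay in \eqref{eq:4R2} and \eqref{eq:4R4} is clear from \eqref{eq:asympHermite}; the real work is matching the constants in the exponents to order $t^3$ with $O(t^5)$ error. This requires Taylor-expanding $\cosh(\beta t)\cosh(t+s)$, $a\sinh(t)$, and the function $h(\xi)=\xi\sqrt{\xi^2-1}-\log(\xi+\sqrt{\xi^2-1})$ near $\xi=1$ (where $h(1+u)=\tfrac{2\sqrt{2}}{3}u^{3/2}+O(u^{5/2})$), and then balancing the Gaussian saddle against the explicit prefactors $e^{\pm 2a\sinh(t)\cdot(\text{boundary value})}$ coming from ${\sf G}^{\pm 1}$ and $e^{\pm s(N-1/2)}$ coming from Lemma \ref{lem:formulaK}. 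The linear-in-$t$ contributions $\pm(4+\alpha)Nt$ in \eqref{eq:4R2} and \eqref{eq:4R4} arise from this balance, and will cancel in the product $\|{\sf R}_{1,2}\|_2\|{\sf R}_{2,1}\|_2$, leaving the cubic-in-$t$ rate on which the proof of Lemma \ref{lem:mainuppbound} depends.
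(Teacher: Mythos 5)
Your overall strategy coincides with the paper's: Neumann series for the two resolvents, and Plancherel--Rotach asymptotics \eqref{eq:asympHermite} plus Laplace's method for the two off-diagonal Hilbert--Schmidt norms. Your reading of the norms on ${\sf R}_{1,1}$ and ${\sf R}_{2,2}$ as operator norms is the correct way to make the Neumann series bound ``$\leq 2$'' meaningful, and your trace argument for \eqref{eq:4R3}, $\|{\sf P}_2\KGN{\sf P}_2\|_{\rm op}\leq\int_{a\cosh(t+s)}^{\infty}\KGN(y,y)\,dy$, is cleaner than the paper's, which instead factors $\KGN$ through \eqref{eq:integrHermKer} and bounds products of two-variable Hilbert--Schmidt norms via an auxiliary integral estimate (Lemma \ref{lem:intHermite}). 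That same factorization, rather than a direct four-fold Laplace analysis of the squared kernel, is how the paper handles \eqref{eq:4R1} and \eqref{eq:4R4}; both routes lead to the same exponents, so these differences are essentially cosmetic.

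The one step where your mechanism does not work as stated is \eqref{eq:4R2}. You claim the $n=N$ term of $-\sum_{n\geq N}e^{-sn}\varphi_n\otimes\varphi_n$ ``dominates geometrically in $n$''. It does not: for $y$ at distance $O(t^2)\sqrt{2N}$ beyond the edge one has $\varphi_{n+1}(y)^2/\varphi_n(y)^2>1$ (increasing $n$ moves $y$ toward the bulk of $\varphi_n$), and for $n\gtrsim y^2/2$ only $\varphi_n(y)^2=O(n^{-1/2})$ with the weak damping $e^{-2sn}=e^{-2\alpha tn}$; the tail of the sum therefore contributes at the same order $e^{-2\alpha tN}$ in the linear-in-$t$ part of the exponent and must be controlled to precision $Nt^3$. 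This precision is load-bearing: the coefficient $\alpha\beta^2+\alpha+2\alpha^2+\tfrac23\alpha^3$ inside the $t^3$ term of \eqref{eq:4R2} is exactly what makes $h_\beta(\alpha)>0$ in the proof of Lemma \ref{lem:mainuppbound}, and a crude term-by-term geometric bound (which only produces $e^{-N\alpha t}$ from the sum) loses it, leaving $h_\beta(\alpha)<0$ for $\alpha>0$ and breaking the lower bound. The paper's fix is to bound $e^{-4a\sinh(t)x}\le e^{-2a^2\sinh(2t)}$ on the support of ${\sf P}_1$, extend the $x$-integral to all of $\rr$, use orthogonality to collapse the squared sum to $\sum_{n\ge N}e^{-2sn}\varphi_n(y)^2\le e^{-2s{\sf D}}(y,y)$, i.e.\ the explicit Mehler kernel \eqref{eq:etDkernel}, and then integrate its Gaussian over $y>a\cosh(t+s)$; this is the source of the $\tanh(s)\cosh(\beta t)^2\cosh(t+s)^2$ contribution in \eqref{eq:1stR12}. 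You should replace the geometric-domination claim with this resummation (or else prove domination of the $n=N$ term with the ${\sf P}_2$ restriction retained, which is considerably more delicate).
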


\begin{proof}[Proof of Lemma \ref{lem:4Rbounds}]
Recall the notation introduced in \eqref{eq:notaQM} and \eqref{eq:notaG}. In the present case we have $a=b=\sqrt{2N}\cosh(\beta t)$. For notational simplicity we will write 
\[{\sf P}_1={\sf P}_{a\cosh(t)},\quad {\sf P}_2={\sf P}_{a\cosh(t+s)},\quad {\sf M}={\sf M}_{a,t} \qand \varrho=\varrho_{a,t}.\]
We will use repeatedly the asymptotics for Hermite functions in \eqref{eq:asympHermite} and the decomposition, for $s\in\rr$, (see Lemma \ref{lem:formulaK})
\begin{equation}\label{eq:decompHermKer}
e^{s{\sf D}}\KGN=\sqrt{N/2}\,e^{s(N-1/2)}\cosh(s/2)^{-1}\bigl[{\sf B}_{N,s}{\sf F}_s{\sf P_0}{\sf F}_s{\sf B}_{N-1,s}+{\sf B}_{N-1,s}{\sf F}_s{\sf P_0}{\sf F}_s{\sf B}_{N,s}\bigr],
\end{equation}
where
\[{\sf B}_{N,s}(x,y)=e^{-\tanh(s/2)(xy)}\varphi_N(x+y)\qqand 
{\sf F}_sf(x)=e^{-\tanh(s/2)x^2/2}f(x).\]
 Note that for the case $s=0$, \eqref{eq:decompHermKer} simply becomes \eqref{eq:integrHermKer}:
\begin{equation}
\KGN=\sqrt{N/2}\left({\sf B}_N{\sf P}_0{\sf B}_{N-1}+{\sf B}_{N-1}{\sf P}_0{\sf B}_N\right),
\end{equation}
with ${\sf B}_N(x,y):={\sf B}_{N,0}(x,y)=\varphi_N(x+y)$.

We start now with the first estimate. Since
\begin{equation}
\|{\sf R}_{1,1}\|_2\leq \sum_{k\geq 0}\|({\sf GQ}\KGN{\sf P}_1{\sf G}^{-1})^{k}\|_2\leq \sum_{k\geq 0}\|{\sf GQ}\KGN{\sf P}_1{\sf G}^{-1}\|_2^{k}<2
\end{equation}
if $\|{\sf GQ}\KGN{\sf P}_1{\sf G}^{-1}\|_2<1/2$, it is enough to show that
\begin{equation}\label{eq:4R1oneterm}
\|{\sf GQ}\KGN{\sf P}_1{\sf G}^{-1}\|_2\leq c_1e^{-c_2Nt^{3}},
\end{equation}
for large enough $Nt^3$.
Let ${\sf N}$ be the multiplication operator defined by ${\sf N}f(x)=\ell_N(x)^{-1}f(x)$ where $\ell_N(x)=((1+t^2+xN^{-1/6})^2-1)^{3/4}$ for $N\in\nn$, $t\in(0,1)$. We have, recalling that ${\sf Q}={\sf P}_1+{\sf P}_1{\sf M}\varrho$,
\begin{multline}\label{eq:estR11}
\|{\sf GQ}\KGN{\sf P}_1{\sf G}^{-1}\|_2\leq \sqrt{N/2}\ts\Bigl(\|{\sf G}{\sf P}_1{\sf B}_N{\sf P}_0\|_2\|{\sf P}_0{\sf B}_{N-1}{\sf P}_1{\sf G}^{-1}\|_2+\|{\sf G}{\sf P}_1{\sf B}_{N-1}{\sf P}_0\|_2\|{\sf P}_0{\sf B}_N{\sf P}_1{\sf G}^{-1}\|_2\\
+\|{\sf G}{\sf P}_1{\sf M}\varrho{\sf B}_N{\sf P}_0{\sf N}\|_2\|{\sf N}^{-1}{\sf P}_0{\sf B}_{N-1}{\sf P}_1{\sf G}^{-1}\|_2+\|{\sf G}{\sf P}_1{\sf M}\varrho{\sf B}_{N-1}{\sf P}_0{\sf N}\|_2\|{\sf N}^{-1}{\sf P}_0{\sf B}_N{\sf P}_1{\sf G}^{-1}\|_2\Bigr).
\end{multline}
We will focus on the first and the third terms in the sum on the right hand side. The bounds for the two remaining terms are very similar. We write first
\begin{equation}\label{eq:gpbp}
\|{\sf G}{\sf P}_1{\sf B}_N{\sf P}_0\|_2^2=\int_{\sqrt{2N}\cosh(\beta t)\cosh(t)}^{\infty}dx\int_0^{\infty}dy\,e^{-4\sqrt{2N}\cosh(\beta t)\sinh(t)x}\varphi_N(x+y)^2,
\end{equation}
and
\begin{equation}\label{eq:pbpg}
\|{\sf P}_0{\sf B}_{N-1}{\sf P}_1{\sf G}^{-1}\|_2^2=\int_0^{\infty}dx\int_{\sqrt{2N}\cosh(\beta t)\cosh(t)}^{\infty}dy\,e^{4\sqrt{2N}\cosh(\beta t)\sinh(t)y}\varphi_{N-1}(x+y)^2.
\end{equation}
In order to deal with both integrals we are going to use the following estimate:

\begin{lem}\label{lem:intHermite}
Let $\alpha>1$. There are constants $c_1,n_0>0$ such that for all $a\in(1,\alpha)$, all $b<2\sqrt{2(a-1)}$, and all $N\in\nn$ satisfying $\min\!\left\{N(a-1)^{3/2},N(2\sqrt{2a-2}-b)^3\right\}\geq n_0$,
\begin{equation}
\int_0^\infty dx\int_{a\sqrt{2N}}^\infty dy\,e^{b\sqrt{2N}y}\varphi_N(x+y)^2\leq\frac{c_1}{N^{7/6}\sqrt{a^2-1}(2\sqrt{2(a-1)}-b)}e^{-N\left(\frac{8\sqrt{2}}{3}(a-1)^{3/2}-2ab\right)}.
\end{equation}
\end{lem}

\begin{proof}[Proof of Lemma \ref{lem:intHermite}]
Changing variables $x\longmapsto\sqrt{2N}x$ and $y\longmapsto\sqrt{2N}(y+a)$ and then using the asymptotics \eqref{eq:asympHermite} we see that, for $N(a-1)^{3/2}\geq n_0$ with $n_0$ large enough, the double integral is bounded by
\begin{equation}
c_1e^{2Nab}N\int_0^\infty dx\int_0^\infty dy\,\frac{(x+y+a)+\left((x+y+a)^2-1\right)^{1/2}}{N^{1/2}\left((x+y+a)^2-1\right)^{1/2}}e^{-2N\left(h(x+y+a)-by\right)},
\end{equation}
where $h(x)=x\sqrt{x^2-1}-\log(x+\sqrt{x^2-1})$.
Since the function $x\mapsto \tfrac{x+\sqrt{x^2-1}}{\sqrt{x^2-1}}$ is decreasing on $(1,\infty)$ and $h(x)\geq \frac{4\sqrt{2}}{3}(x-1)^{3/2}$ for $x\geq 1$, the above integral is bounded by
\begin{multline}\label{eq:estxint}
c_1\frac{a+\sqrt{a^2-1}}{\sqrt{a^2-1}}e^{2Nab}N^{1/2}\int_0^\infty dx\int_0^\infty dy\,e^{-2N\left(\frac{4\sqrt{2}}{3}(x+y+a-1)^{3/2}-by\right)}\\
\leq \frac{c_1}{\sqrt{a^2-1}}e^{2Nab}N^{-1/6}\int_0^\infty dy\,e^{-N\left(\frac{8\sqrt{2}}{3}(a-1+y)^{3/2}-2by\right)},
\end{multline}
where we used the inequality $(x+y)^{3/2}\geq x^{3/2}+y^{3/2}$ for $x,y\geq 0$  and then computed the explicit $x$ integral $\int_0^\infty dx\,e^{-\frac{8\sqrt{2}}{3}Nx^{3/2}}=c\ts N^{-2/3}$ for some constant $c>0$.
For $y\geq0$ the exponent in the $y$ integral is maximized at $y=0$ as long as $b<2\sqrt{2(a-1)}$ so it follows from Laplace's method that
\begin{multline}\label{eq:estyint}
\int_0^\infty dy\,e^{-N\left(\frac{8\sqrt{2}}{3}(a-1+y)^{3/2}-2by\right)}=\frac{c_1}{N(2\sqrt{2a-2}-b)}e^{-N\frac{8\sqrt{2}}{3}(a-1)^{3/2}}\\
\times\left[1+\mathcal{O}\!\left(\tfrac{1}{N(a-1)^{1/2}(2\sqrt{2(a-1)}-b)^{2}}\right)\right],\quad\text{as}\ N\to\infty.
\end{multline}
Since $a$ is bounded and $\min\!\left\{N(a-1)^{3/2},N(2\sqrt{2(a-1)}-b)^3\right\}\geq n_0$, we observe that the estimate holds by taking $n_0$ large enough and moreover the error term is bounded by an arbitrarily small constant $c_2 n_0^{-1}$. Putting \eqref{eq:estyint} and \eqref{eq:estxint} together to complete the proof.
\end{proof}

We will apply this result setting $a_t=\cosh(\beta t)\cosh(t)$ in both \eqref{eq:gpbp} and \eqref{eq:pbpg}, and $b_t=-4\cosh(\beta t)\sinh(t)<0$ for \eqref{eq:gpbp}, $b_t=4\cosh(\beta t)\sinh(t)$ for \eqref{eq:pbpg} (note that $b_t\geq 2\sqrt{2a_t-2}$ for $\beta>3$ and $t\in(0,1/3)$ in this case).
In both cases, we have $a_t-1\geq c_1t^2$ and $2\sqrt{2a_t-2}-b_t\geq c_1t$ with some explicit constant $c_1>0$ for $t\in(0,1)$, so the condition $\min\!\left\{N(a_t-1)^{3/2},N(2\sqrt{2a_t-2}-b_t)^3\right\}\geq n_0$ appearing in the lemma holds if we let $Nt^3\geq n_0$.
Thus, for $Nt^3$ large enough with $t\in(0,1/3)$, we get
\begin{align}
\|{\sf G}{\sf P}_1{\sf B}_N{\sf P}_0\|_2^2&\leq c_1N^{-7/6}t^{-2}e^{-4N\cosh(\beta t)^2\sinh(2t)-\frac{8\sqrt{2}}{3}N(\cosh(\beta t)\cosh(t)-1)^{3/2}},\\
\|{\sf P}_0{\sf B}_{N-1}{\sf P}_1{\sf G}^{-1}\|_2^2&\leq c_1N^{-7/6}t^{-2}e^{4N\cosh(\beta t)^2\sinh(2t)-\frac{8\sqrt{2}}{3}N(\cosh(\beta t)\cosh(t)-1)^{3/2}}.
\end{align}
This yields a bound of $c_1N^{-7/6}t^{-2}e^{-\frac{8\sqrt{2}}{3}N(\cosh(\beta t)\cosh(t)-1)^{3/2}}$ for the first term in the sum in \eqref{eq:estR11}.
Turning now to the third term in the same sum, we have
\begin{align}
&\|{\sf G}{\sf P}_1{\sf M}\varrho{\sf B}_N{\sf P}_0{\sf N}\|_2^2\\
&\hspace{0.3in}=\int_{a\cosh(t)}^{\infty}dx\int_0^{\infty}dy\,e^{-2a^2\sinh(2t)}\varphi_N(2a\cosh(t)-x+y)^2((1+t^2+yN^{-1/6})^2-1)^{-3/2}\\
&\hspace{0.3in}\leq e^{-4N\cosh(\beta t)^2\sinh(2t)}\|\varphi_N\|_2^2\|{\sf P}_0\ell_N^{-1}\|_2^2\leq c_1N^{1/6}t^{-1}e^{-4N\cosh(\beta t)^2\sinh(2t)},
\end{align}
and we can see that, by proceeding analogously to the proof of Lemma \ref{lem:intHermite}, the following estimate holds
\begin{equation}
\|{\sf N}^{-1}{\sf P}_0{\sf B}_{N-1}{\sf P}_1{\sf G}^{-1}\|_2^2\leq c_1N^{-7/6}t\ts e^{4N\cosh(\beta t)^2\sinh(2t)-\frac{8\sqrt{2}}{3}N(\cosh(\beta t)\cosh(t)-1)^{3/2}}.
\end{equation}
Putting this together with the last estimate and the analog bounds for the other two terms in the sum in \eqref{eq:estR11} gives
\begin{equation}
\|{\sf GQ}\KGN{\sf P}_1{\sf G}^{-1}\|_2\leq c_1e^{-\frac{4\sqrt{2}}{3}N(\cosh(\beta t)\cosh(t)-1)^{3/2}},
\end{equation}
for large enough $Nt^3$ which, since $(\cosh(\beta t)\cosh(t)-1)^{3/2}\geq c_2t^3$ for $t\in(0,1)$, gives \eqref{eq:4R1oneterm}.

We turn now to ${\sf R}_{1,2}$, for which we have
\begin{equation}\label{eq:estR12}
\|{\sf R}_{1,2}\|_2\leq\|{\sf G}{\sf P}_1e^{-s{\sf D}}(\KGN-{\sf I}){\sf P}_2\|_2+\|{\sf G}{\sf P}_1{\sf M}\varrho e^{-s{\sf D}}(\KGN-{\sf I}){\sf P}_2\|_2.
\end{equation}
The first term on the right hand side can be estimated as
\begin{align}
\|{\sf G}{\sf P}_1e^{-s{\sf D}}(\KGN-{\sf I}){\sf P}_2\|_2^2&=\int_{a\cosh(t)}^\infty dx\int_{a\cosh(t+s)}^\infty dy\,e^{-4a\sinh(t)x}\Bigl(\sum_{n=N}^\infty e^{-sn}\varphi_n(x)\varphi_n(y)\Bigr)^2\\
&\leq\int_{a\cosh(t)}^\infty dx\int_{a\cosh(t+s)}^\infty dy\,e^{-2a^2\sinh(2t)}\Bigl(\sum_{n=N}^\infty e^{-sn}\varphi_n(x)\varphi_n(y)\Bigr)^2\\
&\leq\int_{a\cosh(t+s)}^\infty dy\,e^{-2a^2\sinh(2t)}\sum_{n=0}^\infty e^{-2sn}\varphi_n(y)^2,\label{eq:est1stR12}
\end{align}
where in the last inequality we have extended the $x$ integral to the whole real line then used the orthogonality of the family $(\varphi_n)_{n\in\nn}$. Note that the sum is nothing but $\left.e^{-2s{\sf D}}(x,y)\right|_{x=y}$ (see \eqref{eq:etDkernel}), hence the last term becomes
\begin{equation}
e^{-2a^2\sinh(2t)}\int_{a\cosh(t+s)}^\infty dy\,\tfrac{1}{\sqrt{2\pi\sinh(2s)}}e^{(s\sinh(2s)-2\sinh(s)^2y^2)/\sinh(2s)}.
\end{equation}
We then use the estimate $\int_t^\infty dx\,e^{-x^2}\leq e^{-t^2}/(2t)$ for $t>0$ to bound the expression above by
\begin{multline}\label{eq:1stR12}
c_1N^{-1/2}t^{-3/2}e^{-2N\left(2\cosh(\beta t)^2\sinh(2t)+\tanh(s)\cosh(\beta t)^2\cosh(t+s)^2\right)}\\
=c_1N^{-1/2}t^{-3/2}e^{-2N\left((4+\alpha)t+\left((4+\alpha)\beta^2+2\alpha^3/3+2\alpha^2+\alpha+8/3\right)t^3+\mathcal{O}(t^5)\right)}.
\end{multline}
For the remaining term on the right hand side of \eqref{eq:estR12}, by writing
\begin{align}
&\|{\sf G}{\sf P}_1{\sf M}\varrho e^{-s{\sf D}}(\KGN-{\sf I}){\sf P}_2\|_2^2\\
&\hspace{0.7in}=\int_{a\cosh(t)}^\infty dx\int_{a\cosh(t+s)}^\infty dy\,e^{-2a^2\sinh(2t)}\Bigl(\sum_{n=N}^\infty e^{-sn}\varphi_n(2a\cosh(t)-x)\varphi_n(y)\Bigr)^2\\
&\hspace{0.7in}=\int_{-\infty}^{a\cosh(t)} dx\int_{a\cosh(t+s)}^\infty dy\,e^{-2a^2\sinh(2t)}\Bigl(\sum_{n=N}^\infty e^{-sn}\varphi_n(x)\varphi_n(y)\Bigr)^2,
\end{align}
then estimating as in \eqref{eq:est1stR12}, we can obtain the same bound as in the first term, which yields \eqref{eq:4R2}.

For ${\sf R}_{2,2}$ we observe that
\begin{equation}
\|{\sf P}_2\KGN{\sf P}_2\|_2\leq \sqrt{N/2}\ts\bigl(\|{\sf P}_2{\sf B}_N{\sf P}_0\|_2\|{\sf P}_0{\sf B}_{N-1}{\sf P}_2\|_2+\|{\sf P}_2{\sf B}_{N-1}{\sf P}_0\|_2\|{\sf P}_0{\sf B}_N{\sf P}_2\|_2\bigr)
\end{equation}
which can be easily seen to be bounded by $1/2$ for large enough $Nt^3$ by bounds similar to those used to prove \eqref{eq:4R1}, and thus we get \eqref{eq:4R3} in exactly the same way.

Finally, for ${\sf R}_{2,1}$ we use a similar decomposition as for ${\sf R}_{1,1}$ (see \eqref{eq:decompHermKer} and \eqref{eq:estR11}): we may write
\begin{multline}\label{eq:estR21}
\|{\sf R}_{2,1}\|_2\leq \sqrt{N/2}\,e^{s(N-1/2)}\cosh(s/2)^{-1}\Bigl(\|{\sf P}_2{\sf B}_{N,s}{\sf F}_s{\sf P_0}\|_2\|{\sf P_0}{\sf F}_s{\sf B}_{N-1,s}{\sf P}_1{\sf G^{-1}}\|_2\\
+\|{\sf P}_2{\sf B}_{N-1,s}{\sf F}_s{\sf P_0}\|_2\|{\sf P_0}{\sf F}_s{\sf B}_{N,s}{\sf P}_1{\sf G^{-1}}\|_2\Bigr).
\end{multline}
We have
\begin{multline}
\|{\sf P}_2{\sf B}_{N,s}{\sf F}_s{\sf P_0}\|_2^2=\int_{a\cosh(t+s)}^{\infty}dx\int_0^{\infty}dy\,e^{-2\tanh(s/2)(xy+y^2/2)}\varphi_N(x+y)^2\\
\leq \int_{a\cosh(t+s)}^{\infty}dx\int_0^{\infty}dy\,\varphi_N(x+y)^2
\leq c_1N^{-7/6}t^{-2}e^{-\frac{8\sqrt{2}}{3}N(\cosh(\beta t)\cosh(t+s)-1)^{3/2}},
\end{multline}
and
\begin{align}
\|{\sf P_0}{\sf F}_s{\sf B}_{N-1,s}{\sf P}_1{\sf G^{-1}}\|_2^2&=\int_0^{\infty}dx\int_{a\cosh(t)}^{\infty}dy\,e^{-2\tanh(s/2)(xy+x^2/2)+4a\sinh(t)y}\varphi_{N-1}(x+y)^2\\
&\leq \int_0^{\infty}dx\int_{a\cosh(t)}^{\infty}dy\,e^{4a\sinh(t)y}\varphi_{N-1}(x+y)^2\\
&\leq c_1N^{-7/6}t^{-2}e^{4N\cosh(\beta t)^2\sinh(2t)-\frac{8\sqrt{2}}{3}N(\cosh(\beta t)\cosh(t)-1)^{3/2}},
\end{align}
where in the two last inequalities we have used Lemma \ref{lem:intHermite}. Putting these bounds together with the analogous ones for the other term on the right hand side of \eqref{eq:estR21} shows that $\|{\sf R}_{2,1}\|_2$ is bounded by
\begin{multline}
c_1N^{-2/3}t^{-2}e^{-N\left(-s-2\cosh(\beta t)^2\sinh(2t)+\frac{4\sqrt{2}}{3}(\cosh(\beta t)\cosh(t)-1)^{3/2}+\frac{4\sqrt{2}}{3}(\cosh(\beta t)\cosh(t+s)-1)^{3/2}\right)}\\
\leq c_1N^{-2/3}t^{-2}e^{-N\left((-\alpha-4)t+\left(2(\beta^2+(1+\alpha)^2)^{3/2}/3+2(1+\beta^2)^{3/2}/3-4\beta^2-8/3\right)t^3+\mathcal{O}(t^5)\right)},
\end{multline}
which gives \eqref{eq:4R4}.
\end{proof}

\appendix

\section{A small deviation estimate for a finite GUE matrix}
\label{sec:estimates}

Let $\lambda_{{\rm GUE},N}$ be the largest eigenvalue of an $N\times N$ GUE matrix $A$, defined as follows: $A$ is a (complex-valued) Hermitian matrix $A$ such that $A_{ij}=\mathcal{N}\tsm(0,1/4)+\I\ts\mathcal{N}\tsm(0,1/4)$ for $i>j$ and $A_{ii}=\mathcal{N}\tsm(0,1/2)$, where all the Gaussian variables are independent (subject to the Hermitian condition).

\begin{lem}\label{lem:tailGUEN}
There are constants $c_1,c_2,n_0>0$ such that for all $t\in(0,1)$ and $N\in\nn$ satisfying $Nt^{3/2}\geq n_0$,
\begin{equation}
\tfrac{c_1}{Nt^{3/2}}e^{-\frac{8\sqrt{2}}{3}N\left(t^{3/2}+\mathcal{O}(t^{5/2})\right)}\leq \pp\!\left(\lambda_{{\rm GUE},N}\geq \sqrt{2N}(1+t)\right)\leq \tfrac{c_2}{Nt^{3/2}}e^{-\frac{8\sqrt{2}}{3}Nt^{3/2}}.
\end{equation}
\end{lem}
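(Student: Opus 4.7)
My plan is to attack the estimate through the Fredholm determinant representation of the GUE largest-eigenvalue distribution. With the paper's normalization, $F_{{\rm GUE},N}(r)=\det(\sI-\KGN)_{L^2((r,\infty))}$, so writing $r_t=\sqrt{2N}(1+t)$ and $\sK={\sf P}_{r_t}\KGN{\sf P}_{r_t}$, the probability of interest is $1-\det(\sI-\sK)_{L^2(\rr)}$. Since $\KGN$ is a projection, $\sK$ is positive trace class with spectrum in $[0,1]$; the elementary inequalities $1-\tr(\sK)\leq\det(\sI-\sK)\leq e^{-\tr(\sK)}$ then give $\tfrac12\tr(\sK)\leq 1-\det(\sI-\sK)\leq\tr(\sK)$ as soon as $\tr(\sK)\leq1$, which will be automatic once $Nt^{3/2}\geq n_0$. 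So the entire problem reduces to bounding $\tr(\sK)$ above and below by the stated expression.

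Next, I would apply the integral representation \eqref{eq:integrHermKer} (which is just Lemma \ref{lem:formulaK} at $t=0$) to write
\begin{equation}
  \tr(\sK)=\sqrt{2N}\int_{r_t}^{\infty}\!dx\int_0^{\infty}\!dz\,\varphi_N(x+z)\varphi_{N-1}(x+z).
\end{equation}
Swapping the order of integration gives $\tr(\sK)=\sqrt{2N}\int_{r_t}^{\infty}(u-r_t)\varphi_N(u)\varphi_{N-1}(u)\,du$. After rescaling $u=\sqrt{2N}(1+t+s)$ this becomes
\begin{equation}
  \tr(\sK)=(2N)^{3/2}\!\int_0^{\infty}\! s\,\varphi_N\bigl(\sqrt{2N}(1+t+s)\bigr)\varphi_{N-1}\bigl(\sqrt{2N}(1+t+s)\bigr)\,ds,
\end{equation}
which is an integral over the ``right tail'' region, and I can plug in the Plancherel–Rotach-type asymptotics \eqref{eq:asympHermite}. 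To leading order the integrand is (up to a multiplicative factor bounded by $C/\sqrt{t+s}$) given by $e^{-2Nh(1+t+s)}$, where $h(x)=x\sqrt{x^2-1}-\log(x+\sqrt{x^2-1})$ satisfies $h'(x)=2\sqrt{x^2-1}$.

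For the upper bound I would exploit the elementary convexity inequality $\sqrt{y^2-1}\geq\sqrt{2(y-1)}$ for $y\geq1$, which integrates to $h(1+t+s)\geq\tfrac{4\sqrt{2}}{3}t^{3/2}+2\sqrt{2t}\,s$. Combined with the crude bound on the rational prefactor coming from \eqref{eq:asympHermite}, this yields
\begin{equation}
  \tr(\sK)\leq c\,N^{3/2}\!\int_0^{\infty}\!\frac{s}{\sqrt{N(t+s)}}\,e^{-\frac{8\sqrt{2}}{3}Nt^{3/2}-4N\sqrt{2t}\,s}\,ds\leq\frac{c_2}{Nt^{3/2}}\,e^{-\frac{8\sqrt{2}}{3}Nt^{3/2}}
\end{equation}
by a direct computation (the $s$-integral concentrates at scale $1/(N\sqrt{t})$ and contributes the $1/(Nt)$ factor, which combined with the $1/\sqrt{t}$ from the prefactor gives the $1/(Nt^{3/2})$ power). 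For the matching lower bound, I would apply Laplace's method at $s=0$ using the exact expansion $h(1+t)=\tfrac{4\sqrt{2}}{3}t^{3/2}(1+O(t))$; this is precisely what forces the $\mathcal O(t^{5/2})$ correction in the exponent on the lower side, since $2N\cdot O(t^{5/2})$ is the unavoidable error once we pass through the actual value of $h(1+t)$ rather than a convex lower bound.

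The main obstacle is keeping the error terms under control simultaneously in $N$ and $t$. The asymptotic \eqref{eq:asympHermite} carries a multiplicative error of size $\mathcal O\bigl(1/(N(x-1)^{3/2})\bigr)$; at $x=1+t$ this is $\mathcal O(1/(Nt^{3/2}))$, which is $o(1)$ precisely in the regime $Nt^{3/2}\geq n_0$ of the statement, so choosing $n_0$ large keeps the asymptotic multiplicative error below any fixed constant. A similar care is needed with the replacement of $\varphi_{N-1}$ by its $\varphi_N$-type asymptotics (a $1+\mathcal O(1/N)$ perturbation), and with the tail of the $s$-integral beyond the Laplace region, which decays exponentially in $N\sqrt{t}\cdot s$ and contributes negligibly. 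Once these are dealt with, the $1/(Nt^{3/2})$ prefactor emerges from the Gaussian Laplace computation and matches on both sides, completing both halves of the estimate.
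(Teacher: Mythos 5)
Your proposal is correct and follows essentially the same route as the paper: the elementary determinant--trace inequalities $1-\tr({\sf K})\leq\det({\sf I}-{\sf K})\leq e^{-\tr({\sf K})}$, the integral representation \eqref{eq:integrHermKer} to reduce the trace to a one-dimensional integral against $\varphi_N\varphi_{N-1}$, the asymptotics \eqref{eq:asympHermite}, and Laplace's method around $s=0$ with the expansion of $h(1+t)$. The only cosmetic difference is that for the upper bound you substitute the global convexity bound $h(1+t+s)\geq\tfrac{4\sqrt{2}}{3}t^{3/2}+2\sqrt{2t}\,s$ before integrating, whereas the paper first runs Laplace's method and only then invokes $h(1+t)\geq\tfrac{4\sqrt{2}}{3}t^{3/2}$; both yield the same clean exponent on the upper side.
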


This estimate extends to large $t$ the one appearing in \cite[Lem. 7.3]{zeitouni-paquette} (we remark also that in that paper the dependence on $t$ in the prefactor in the lower bound is missing).

\begin{proof}[Proof of Lemma \ref{lem:tailGUEN}]
We begin by recalling that, under the scaling which we are using for the GUE (see \cite[Sec. 2]{nibm-loe}),
\begin{equation}
\pp\!\left(\lambda_{{\rm GUE},N}\leq t\right)=\det\!\left({\sf I}-{\sf P}_t\KGN{\sf P}_t\right)=\exp\!\left(-\sum_{n=1}^\infty\frac{\tr(({\sf P}_t\KGN{\sf P}_t)^n)}{n}\right).
\end{equation}
As in \cite{zeitouni-paquette}, the second equality comes from the fact that, since $\KGN$ is a positive self-adjoint operator, then all its eigenvalues are non-negative, and so are all the eigenvalues of ${\sf P}_t\KGN{\sf P}_t$. We also have that all traces are non-negative and $\tr(({\sf P}_t\KGN{\sf P}_t)^n)\leq \tr({\sf P}_t\KGN{\sf P}_t)^n$. Thus we get the simple bounds
\begin{equation}
1-\tr({\sf P}_t\KGN{\sf P}_t)\leq \pp\!\left(\lambda_{{\rm GUE},N}\leq t\right)\leq e^{-\tr({\sf P}_t\KGN{\sf P}_t)},
\end{equation}
which implies
\begin{equation}
1-e^{-\tr({\sf P}_t\KGN{\sf P}_t)}\leq \pp\!\left(\lambda_{{\rm GUE},N}> t\right)\leq \tr({\sf P}_t\KGN{\sf P}_t).
\end{equation}
Therefore it only remains to give upper and lower bounds for the trace, which is given by
\begin{equation}
\tr\!\left({\sf P}_{\sqrt{2N}(1+t)}\KGN{\sf P}_{\sqrt{2N}(1+t)}\right)=\int_{\sqrt{2N}(1+t)}^\infty dx\,\KGN(x,x).
\end{equation}
Using the integral representation for the kernel $\KGN$ in \eqref{eq:integrHermKer} and changing variables $x\mapsto\sqrt{2N}(1+t+x)$ and $z\mapsto\sqrt{2N}z$ gives
\begin{multline}
(2N)^{3/2}\int_0^\infty dx\int_0^{\infty}dz\,\varphi_N\bigl(\sqrt{2N}(1+t+x+z)\bigr)\varphi_{N-1}\bigl(\sqrt{2N}(1+t+x+z)\bigr)\\
=(2N)^{3/2}\int_0^\infty dx\,x\varphi_N\bigl(\sqrt{2N}(1+t+x)\bigr)\varphi_{N-1}\bigl(\sqrt{2N}(1+t+x)\bigr).
\end{multline}
We use now the asymptotics for the Hermite functions in \eqref{eq:asympHermite} to deduce that, for $t>0$ as $Nt^{3/2}\to\infty$, the above is bounded by
\begin{equation}\label{eq:asymno1}
c_1N\int_0^\infty dx\,\frac{x}{((1+t+x)^2-1)^{1/2}}e^{-2Nh(1+t+x)}\!\left[1+\mathcal{O}\!\left(\tfrac{1}{Nt^{3/2}}\right)\right],
\end{equation}
where $h(t)=t\sqrt{t^2-1}-\log(t+\sqrt{t^2-1})$. The exponent in the $x$ integral is maximized at $x=0$ so it follows from Laplace's method that, as $N\to\infty$,
\begin{equation}\label{eq:asymno2}
\int_0^\infty dx\,\frac{x}{((1+t+x)^2-1)^{1/2}}e^{-2Nh(1+t+x)}=\frac{e^{-2Nh(1+t)}}{N^2(t^2+2t)^{3/2}}\left[c_1+\mathcal{O}\!\left(\tfrac{1}{N}\right)\right],\quad\forall t\in(0,1).
\end{equation}
Since $N\geq Nt^{3/2}$ for $t\in(0,1)$, both error bounds in \eqref{eq:asymno1} and \eqref{eq:asymno2} are arbitrarily small if we let $Nt^{3/2}\geq n_0$ for $n_0$ large enough. Putting these estimates together to obtain the asymptotics for the trace and then using the expansions $h(1+t)=\frac{4\sqrt{2}}{3}t^{3/2}+\mathcal{O}(t^{5/2})$ (with $h(1+t)\geq\frac{4\sqrt{2}}{3}t^{3/2}$) and $(t^2+2t)^{3/2}=2\sqrt{2}t^{3/2}+\mathcal{O}(t^{5/2})$ for $t\in(0,1)$ completes the claimed bounds.
\end{proof}

\vs
\paragraph{\bf Acknowledgements}
The authors would like to thank Brian Rider for calling our attention to \cite{zeitouni-paquette}.
GBN and DR were partially supported by Conicyt Basal-CMM and by Programa Iniciativa Cient\'ifica Milenio grant number NC120062 through Nucleus Millenium Stochastic Models of Complex and Disordered Systems.
DR was also supported by Fondecyt Grant 1160174.

\printbibliography[heading=apa]

\end{document}